\tikzstyle{symb} = [draw, line width=10pt]
\newcommand*{\recsumbase}{%
\begin{tikzpicture}
  \draw[symb] (0,1) rectangle (6,5);
  \draw[symb] (0,3) -- (6,3) (1.75,1) -- (1.75,5) (4.25,1) -- (4.25,5);
  \end{tikzpicture}
}
\newcommand*{\sqsumbase}{%
\begin{tikzpicture}
  \draw[symb] (0,1) rectangle (4,5);
  \draw[symb] (0,3) -- (4,3) (2,1) -- (2,5);
\end{tikzpicture}
}
\newcommand*{\sqmultbase}{%
\begin{tikzpicture}
  \draw[symb] (0,1) rectangle (4,5);
  \draw[symb] (0,1) -- (4,5) (0,5) -- (4,1);
\end{tikzpicture}
}
\def\withsub_#1{_{#1}\!\egroup}
\newcommand*{\sqmultsmall}{\,\resizebox{0.5em}{0.5em}{\sqmultbase}}
\newcommand*{\sqsumsmall}{\,\resizebox{0.5em}{0.5em}{\sqsumbase}}
\newcommand*{\recsumsmall}{\,\resizebox{0.75em}{0.5em}{\recsumbase}}
\newcommand*{\sqmultlarge}{\resizebox{0.625em}{0.625em}{\sqmultbase}}
\newcommand*{\sqsumlarge}{\resizebox{0.625em}{0.625em}{\sqsumbase}}
\newcommand*{\recsumlarge}{\resizebox{0.875em}{0.625em}{\recsumbase}}
\newcommand*{\sqmult}{\mathbin \bgroup \mathchoice{\sqmultlarge}{\sqmultlarge}{\sqmultsmall}{\sqmultsmall}\@ifnextchar_{\withsub}{\egroup}}
\newcommand*{\sqsum}{\mathbin \bgroup \mathchoice{\sqsumlarge}{\sqsumlarge}{\sqsumsmall}{\sqsumsmall}\@ifnextchar_{\withsub}{\egroup}}
\newcommand*{\recsum}{\mathbin \bgroup \mathchoice{\recsumlarge}{\recsumlarge}{\recsumsmall}{\recsumsmall}\@ifnextchar_{\withsub}{\egroup}}
\newcommand{\cauchy}[2]{\mathcal{G}_{#1} \left(#2 \right)}
\newcommand{\invcauchy}[2]{\mathcal{K}_{#1} \left(#2 \right)}
\newcommand{\rtrans}[2]{\mathcal{R}_{#1} \left(#2 \right)}
\newcommand{\myinvcauchy}[3]{\mathcal{K}^{#1}_{#2} \left( #3 \right)}
\newcommand{\myrtrans}[3]{\mathcal{R}^{#1}_{#2} \left( #3 \right)}
\newcommand{\strans}[2]{\widetilde{\mathcal{S}}_{#1} \left(#2 \right)}
\newcommand{\mtrans}[2]{\widetilde{\mathcal{M}}_{#1} \left(#2 \right)}
\newcommand{\invmtrans}[2]{\widetilde{\mathcal{M}}^{(-1)}_{#1} \left(#2 \right)}
\newcommand{\mystrans}[3]{\widetilde{\mathcal{S}}^{#1}_{#2} \left(#3 \right)}
\newcommand{\myinvmtrans}[3]{\widetilde{\mathcal{N}}^{#1}_{#2} \left(#3 \right)}
\newtheorem{theorem}{Theorem}[section]
\newtheorem{corollary}[theorem]{Corollary}
\newtheorem{lemma}[theorem]{Lemma}
\theoremstyle{definition}
\newtheorem{definition}[theorem]{Definition}
\newtheorem{remark}[theorem]{Remark}
\newtheorem{example}[theorem]{Example}
\newcommand{\R}{\mathbb{R}}
\newcommand{\C}{\mathbb{C}}
\newcommand{\E}[2]{\,\mathbb{E}_{#1}\! \left\{ {#2} \right\}}
\newcommand{\mydet}[1]{\det \left[ #1 \right]}
\newcommand{\D}[1]{\mathrm{D} \left( #1 \right)}
\newcommand{\Tr}[1]{\mathrm{Tr} \left[ #1 \right]}
\newcommand{\tr}[1]{\mathrm{tr} \left[ #1 \right]}
\newcommand{\myphi}[1]{\mathrm{\phi} \left[ #1 \right]}
\newcommand{\ifno}[2]{
\makeatletter
\@ifundefined{#1}{ #2 }{}
}
\newcommand{\AND}{\qquad\text{and}\qquad}
\renewcommand{\d}[1]{\,\mathrm{d}{#1}}
\renewcommand{\L}[1]{\mathcal{L} \left\{  #1 \right\}}
\newcommand{\deriv}[1]{ \frac{\partial}{\partial #1}}
\newcommand{\ideriv}[2]{ \frac{\partial^{#2}}{(\partial #1)^{#2}}}
\renewcommand{\invcauchy}[2]{ \mathcal{G}_{#1}^{-1} \left( #2 \right)}
\newcommand{\fkdet}[1]{\Delta^+ \left( #1 \right)}
\newcommand{\iu}{i}
\newcommand{\M}{m}
\newcommand{\marcus}{U}
\newcommand{\mynorm}[2]{ \left\| #1 \right\|_{#2}}
\newcommand{\infnorm}[1]{\mynorm{#1}{\infty}}
\newcommand{\eigen}[1]{\lambda \left( #1 \right)}
\newcommand{\mymod}[2]{\mod [{#1}^{#2} ] }
\renewcommand{\M}{d}
\renewcommand{\sqsum}{\boxplus}
\renewcommand{\sqmult}{\boxtimes}
\begin{document}

\title{
Polynomial convolutions and (finite) free probability
}
\author{Adam W. Marcus\thanks{Research supported by NSF CAREER Grant 
DMS-1552520.}
\\ \'Ecole Polytechnique F\'ed\'erale de Lausanne
}

\maketitle

\begin{abstract}
We introduce a finite version of free probability and show the link between recent results using polynomial convolutions and the traditional theory of free probability. 
One tool for accomplishing this is a seemingly new transformation that allows one to reduce computations in our new theory to computations using classically independent random variables.
We then explore the idea of finite freeness and its implications.
Lastly, we show applications of the new theory by deriving the finite versions of some well-known free distributions and then proving their associated limit laws directly.
In the process, we gain a number of insights into the behavior of convolutions in traditional free probability that seem to get lost when the operators being convolved are no longer finite.
\end{abstract}
{\bf Keywords:} Free probability, polynomial convolutions

\section{Introduction} \label{sec:intro}

After its introduction in 1986 in a series of papers by Dan Voiculescu, free probability has seen an incredible growth in both its theory and its applications.
This has included a theory of {\em free cumulants}, first introduced by Nica 
and Speicher, which gave a unified framework for understanding classical and 
free independence through the lens of combinatorics \cite{nica2006lectures}.
It has been used as a tool in a variety of areas, including random matrix theory, combinatorics, representations of symmetric groups, large deviations, and quantum information theory.
In most cases, the relationships mentioned above only exist in an asymptotic sense, primarily due to the fact that no nontrivial free objects exist in finite dimensions.
However, recent work of the author with Daniel Spielman and Nikhil Srivastava \cite{ICM, conv, IF4} suggests that the behavior of finite structures closely resembles the asymptotic ``free'' behavior, despite not technically being ``free''. 
The goal of this paper is to introduce a theory that we call ``finite free 
probability'' as a way to extend the fundamental concepts and insights of free 
probability to finite objects using polynomial convolutions.

\subsection{A brief introduction to free probability}
\label{sec:free_prob}

We begin with an informal introduction to free probability (in particular, as it relates to classical probability).
Let $(M_1, \mu_1)$ and $(M_2, \mu_2)$ be probability spaces and let $p(x, y)$ a bivariate polynomial.
In classical probability, one realizes $\mu_1$ and $\mu_2$ as {\em random variables} $X_1$ and $X_2$ with the goal of investigating the distribution $\mu_Y$ of objects of the form $Y = p(X_1, X_2)$ living in the tensor product $M_1 \otimes M_2$.
Each of these probability spaces are equipped with a {\em test function} $\E{}{}$ that allows one to measure various functions of the $X_i$.
Typically, $\mu_Y$ cannot be calculated without knowledge of the joint probability distribution $\mu_{X_1, X_2}$.
However, there is one ``special'' joint distribution for which one can calculate $\mu_Y$ only knowing $\mu_1$ and $\mu_2$ for any $p$ ---  the case when $X_1$ and $X_2$ are {\em independent}.

In noncommutative probability, one realizes $\mu_1$ and $\mu_2$ as the spectrum of {\em random operators} $A_1$ and $A_2$.
The goal is still to investigate the distribution $\mu_Y$ of objects of the form $Y = p(A_1, A_2)$, however this time such objects live in the free product of $M_1$ and $M_2$.
Free independence (or ``freeness'') is the free product analogue of classical independence on tensor products.
In particular, it is the ``special'' joint distribution that allows $\mu_Y$ to be calculated completely in terms of $\mu_1$ and $\mu_2$.
This time the spaces are equipped with a test function $\myphi{}$ which has tracial properties (since the distribution lives on the spectrum of operators) that is used to measure various functions of the $A_i$.

Formally, we say that two random operators $A, B$ are freely independent if for all $n$ and all polynomials $p_1, \dots, p_{2n}$, we have that 
\[
\myphi{p_1(A) p_2(B) \dots p_{2n-1}(A) p_{2n}(B)} 
= 0
\]
whenever $\myphi{p_{2i-1}(A)} = \myphi{p_{2i}(B)} = 0$ for all $1 \leq i \leq n$.
In practice, the main difference between free and classical independence is that free independence respects the noncommutativity of random operators.
For example, when $X$ and $Y$ are independent, we have
\[
\E{}{X^2Y^2} 
= \E{}{XYXY} 
= \E{}{X^2}\E{}{Y^2}
\]
whereas when $A$ and $B$ are freely independent, we have
\[
\myphi{A^2B^2} 
= \myphi{A^2}\myphi{B^2}
\]
while 
\[
\myphi{ABAB} 
= \myphi{A^2}\myphi{B}^2 + \myphi{A}^2\myphi{B^2} - \myphi{A}^2\myphi{B}^2
\]

On the other hand, there are a number of connections between classical and free independence, with varying degrees of understanding.
Random matrix theory, in particular, captures a number of these connections, as many times classical independence of individual entries leads to free independence of spectral distributions.
For example, we have the following theorem of Voiculescu:
\begin{theorem}
\label{thm:free}
Let $\mu_A$ and $\mu_B$ be discrete probability distributions and let $A_{\M}$ and $B_{\M}$ be $\M \times \M$ real symmetric matrices with eigenvalue distribution $\mu_A$ and $\mu_B$ (respectively).
Let $R_{\M}$ and $R'_{\M}$ be i.i.d. random matrices drawn uniformly (via the Haar measure) from $O(\M)$ (the orthogonal group).
Then the operators 
\[
A 
= \lim_{\M \to \infty} R_{\M} A_{\M} R^T_{\M}
\AND
B 
= \lim_{\M \to \infty} R'_{\M} B_{\M} R'^T_{\M}
\]
are freely independent.
\end{theorem}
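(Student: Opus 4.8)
The plan is to verify Voiculescu's defining relation for freeness directly on the limiting operators, by computing the relevant mixed moments using Weingarten calculus on the orthogonal group.

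First I would reduce to a single random rotation. Since $R_\M$ and $R'_\M$ are independent and Haar distributed on $O(\M)$, conjugating every word in $R_\M A_\M R_\M^T$ and $R'_\M B_\M (R'_\M)^T$ by $R_\M^T$ — which preserves all normalized traces — replaces this pair by $\big(A_\M,\, U_\M B_\M U_\M^T\big)$, where $U_\M := R_\M^T R'_\M$ is again Haar on $O(\M)$. So it suffices to prove that a deterministic sequence $A_\M$ with limiting eigenvalue distribution $\mu_A$ and the sequence $U_\M B_\M U_\M^T$ (with $B_\M$ deterministic, limiting eigenvalue distribution $\mu_B$, and $U_\M$ Haar) become asymptotically freely independent. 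The individual limits are plainly $\mu_A$ and $\mu_B$, so by the definition quoted above this reduces to showing that for every $n \geq 1$ and all polynomials $p_1,\dots,p_n,q_1,\dots,q_n$ with $\frac{1}{\M}\Tr{p_i(A_\M)}\to 0$ and $\frac{1}{\M}\Tr{q_i(B_\M)}\to 0$,
\[
\E{}{\frac{1}{\M}\Tr{p_1(A_\M)\,U_\M q_1(B_\M)U_\M^T\,p_2(A_\M)\,U_\M q_2(B_\M)U_\M^T\cdots p_n(A_\M)\,U_\M q_n(B_\M)U_\M^T}}\longrightarrow 0;
\]
a variance bound of order $O(\M^{-2})$, obtained from the same sort of computation (or from concentration of measure on $O(\M)$), then upgrades this to almost-sure convergence, and the resulting convergence of all mixed moments shows the limiting joint $*$-distribution exists.

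Writing $a_i = p_i(A_\M)$ and $b_i = q_i(B_\M)$ (all deterministic), I would expand the $2n$ matrix entries contributed by the $U_\M$'s and $U_\M^T$'s and integrate using the orthogonal Weingarten formula. Schematically,
\[
\E{}{\frac{1}{\M}\Tr{a_1 U_\M b_1 U_\M^T\cdots a_n U_\M b_n U_\M^T}} = \sum_{\sigma,\tau}\mathrm{Wg}^{O}(\M,\sigma,\tau)\,\M^{\,c(\sigma,\tau)}\,t_\tau(a_1,\dots,a_n)\,s_\sigma(b_1,\dots,b_n),
\]
where the sum runs over pairs of perfect matchings $\sigma,\tau$ of $\{1,\dots,2n\}$: the matching $\tau$ of the row indices of the $U_\M$'s dictates which $a_i$ are collected into a common trace, producing the factor $t_\tau$; $\sigma$ does the same for the $b_j$, producing $s_\sigma$; $c(\sigma,\tau)$ counts the remaining free index loops; and $\mathrm{Wg}^{O}$ is the orthogonal Weingarten function, whose leading-order size in $\M$ is explicitly known. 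Collecting powers of $\M$, the pairs $(\sigma,\tau)$ contributing at the top order $\M^{0}$ are exactly the non-crossing ones, and for each such pair $t_\tau\cdot s_\sigma$ factors into a product of single-letter normalized traces $\frac{1}{\M}\Tr{a_i}$ and $\frac{1}{\M}\Tr{b_j}$, every one of which tends to $0$ by the centering hypothesis; each remaining pair contributes only $O(\M^{-1})$. Hence the whole sum is $o(1)$, as required, and the analogous computation for the square of such a trace yields the claimed $O(\M^{-2})$ variance.

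The main obstacle is precisely this power-counting step: one must prove that the only matchings $(\sigma,\tau)$ surviving in the limit are those forcing a complete factorization of the word into normalized traces of the individual centered polynomials, and that any crossing strictly lowers the exponent of $\M$. This is the combinatorial heart of asymptotic freeness; it rests on the precise leading-order asymptotics of the orthogonal Weingarten function and on a genus-type analysis of how $\sigma$, $\tau$, and the cyclic order of the letters in the word interact, and it is the part of the argument that does not reduce to routine bookkeeping.
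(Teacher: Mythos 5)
The paper does not prove this statement at all: it is quoted in Section~1.1 as a known theorem of Voiculescu, cited as background to motivate the finite theory, so there is no in-paper argument to compare yours against. Judged on its own, your outline is the standard modern route (the Weingarten/genus-expansion proof in the style of Collins--\'Sniady, rather than Voiculescu's original argument), and the preliminary reductions are sound: replacing the pair $(R_\M A_\M R_\M^T,\, R'_\M B_\M R_\M'^T)$ by $(A_\M,\, U_\M B_\M U_\M^T)$ with $U_\M = R_\M^T R'_\M$ Haar is legitimate because normalized traces of words are conjugation-invariant and $R_\M^T R'_\M$ is again Haar by independence and translation invariance; the reformulation of freeness as vanishing of expected normalized traces of alternating centered words is also the right target, and the variance-plus-Borel--Cantelli upgrade is routine once the moment computation is in hand.

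The genuine gap is the one you name yourself: the entire content of the theorem is the power-counting step, and you have not carried it out. Specifically, you need (i) the precise leading-order asymptotics of the orthogonal Weingarten function $\mathrm{Wg}^O(\M,\sigma,\tau)$ as a function of the distance between the pairings $\sigma$ and $\tau$, and (ii) the combinatorial argument that, for the alternating word, every pairing pair achieving the top power of $\M$ forces at least one isolated factor of the form $\frac{1}{\M}\Tr{p_i(A_\M)}$ or $\frac{1}{\M}\Tr{q_j(B_\M)}$ (it is this weaker ``at least one singleton trace survives'' statement, not a complete factorization into single-letter traces, that one actually needs and that the standard argument delivers). Without (i) and (ii) the displayed sum could a priori contain $O(1)$ terms that do not vanish under the centering hypothesis, and nothing in the proposal rules this out. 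As written, this is an accurate roadmap to a correct proof, not a proof; to complete it you would either have to execute the genus analysis or explicitly invoke the Collins--\'Sniady asymptotics for $O(\M)$ as a black box, which would be an acceptable resolution given that the paper itself treats the theorem as classical.
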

Theorem~\ref{thm:free} reveals one of the hurdles in applying free probability theory to finite structures: finite dimensional matrices are freely independent if and only if one of them is a multiple of the identity.
As a result, applications of the theory must be done in an asymptotic sense.

\subsection{Convolutions}
\label{sec:free_convolutions}

The computational aspects of free probability center around computing polynomial functions of random operators.
Unlike in the classical case, however, even the basic operations of addition and multiplication are nontrivial to compute, even when the operators involved are freely independent.

\subsubsection{Additive Convolution}

Let $A$ and $B$ be operators with spectral distributions $\mu_A$ and $\mu_B$.
The {\em free additive convolution} of $\mu_A$ and $\mu_B$ (written $\mu_{A} \boxplus \mu_{B}$) is the distribution of the operator $A + B$ in the case that $A$ and $B$ are freely independent.
To compute such a thing, we begin by computing the {\em Cauchy transform} of $\mu_A$
\begin{equation}
\label{eq:cauchy}
\cauchy{\mu_A}{x} 
= \sum_i \frac{1}{x^{i+1}} \myphi{A^i} 
= \myphi{(xI - A)^{-1}} 
= \int \frac{\mu_A(t)}{x-t} \d{t}
\end{equation}
and then the so-called {\em R-transform} as a function of the inverse of $\cauchy{\mu_A}{x}$
\[
\rtrans{\mu_A}{x} 
= \invcauchy{\mu_A}{x} - \frac{1}{x} 
= \invcauchy{\mu_A}{x} - \invcauchy{\mu_0}{x}
\]
where $0$ is the zero matrix.
Here, ``inverse'' means the compositional inverse of the power series of $\cauchy{\mu_A}{x}$ expanded around $x = \infty$.
When $A$ and $B$ are freely independent, one then has
\begin{equation}
\label{eq:Rtransform}
\rtrans{\mu_A \boxplus \mu_B}{x} = \rtrans{\mu_A}{x} + \rtrans{\mu_B}{x}.
\end{equation}
Alternatively, one could simply define the free additive convolution $\mu_{A} \boxplus \mu_{B}$ as being the spectral distribution satisfying Equation~(\ref{eq:Rtransform}).
In the case that all of the operators have compact support, this is known to uniquely define the distribution $\mu_{A} \boxplus \mu_{B}$ \cite{voiculescu}.

We remark that Equation~(\ref{eq:cauchy}) shows the relationship between the Cauchy transform and the (non-exponential)  moment generating function
\[
\frac{1}{x}\cauchy{\mu}{\frac{1}{x}}
= \sum_i M_i(\mu_A)x^i
= \sum_i \myphi{A^i}x^i
\]
where 
\[
M_i 
= \int x^i \mu_A(x) \d{x}
\]
is the $i$th moment.
In this respect, it is similar to the way in which one would form the classical additive convolution by forming the moment generating function $\E{}{e^{xY}}$ and then adding some transformation of it (in this case, $\ln \E{}{e^{xY}}$).

\subsubsection{Multiplicative Convolution}

For the multiplicative case, one uses a variant of the Cauchy transform above
\[
\mtrans{\mu_A}{x} 
= x\cauchy{\mu_A}{x} - 1
\]
and then form (what we call) the {\em modified S-transform} by inverting a power series (this time taken at $x = 0$)
\[
\strans{\mu_A}{x} 
= \frac{s}{1+s}\invmtrans{\mu_A}{x} 
= \frac{\invmtrans{\mu_A}{x}}{\invmtrans{\mu_I}{x}}
\]
where $I$ is the identity matrix.
When $A$ and $B$ are freely independent, the moments of $\mu_{A} \boxtimes \mu_{B}$ are then defined by 
\begin{equation}
\label{eq:Stransform}
\strans{\mu_A \boxtimes \mu_{B}}{x} 
= \strans{\mu_A}{x} \strans{\mu_B}{x}
\end{equation}
\begin{remark}
We call this the ``modified'' S-transform because it does not use the same definition that is typically seen in free probability \cite{voiculescu}.
The relation to the usual definition is simply
\[
\strans{\mu_A}{x} 
= \frac{1}{\mathcal{S}_{\mu_A}(x)}.
\]
This of course does not change the relationship in Equation~(\ref{eq:Stransform}) in any way, and so for the purposes of traditional free probability it seems like a silly alteration.
However, in the finite setting, it does seem to be the more appropriate choice (as is discussed in \cite{conv}).
\end{remark}

\subsection{Previous Work}

The idea of extending free probability to finite matrices is not new.
As best the author could tell, it was first proposed by Edelman and Rao in 2005 \cite{orig_edelman}.
Later, the same authors suggested an implementation of such a theory, although with a focus on the computational (i.e. with Matlab) aspects of free probability \cite{edelman_polynomials}.
Their implementation uses polynomials (just as ours will); however, their polynomials are substantially different from the ones used in this paper.
Rather than encoding distributions in the roots of a polynomial, they encode various transforms as the solutions of bivariate polynomials and then give operations on these polynomials that capture the behavior of additive and multiplicative convolution.

The use of polynomials in \cite{edelman_polynomials} is far from coincidental.
Polynomials have been closely connected to random matrix theory from its initial beginnings.
It was recognized early on that the spectral distributions of random matrices matched the asymptotic root distribution of various orthogonal polynomials, and many of the known results regarding spectral distributions have been proved using this correspondence \cite{deift_book}.
Polynomials have also been used in similar ways in a strictly free probabilistic setting \cite{anshelevich}, and are one of the major tools in the investigation of {\em universality} for random matrices \cite{universality}.

Lastly, we mention the work of Pereira connecting so-called {\em trace vectors} to hyperbolic polynomials \cite{trace_vectors_polynomials}, which can be seen as a special case of the finite freeness developed in Section~\ref{sec:ff_position}.
See Section~\ref{sec:ff_more} for more details.
 
\subsection{New Results}

The contribution of this paper is to show a direct link between polynomial 
convolutions and free probability that the author believes is fundamentally new.
Previous work used polynomials only for their asymptotic properties, and 
typically focused on orthogonal polynomials.
In this work, we show that polynomials actually exhibit a close link to free probability on a finite scale.
Furthermore, this will provide a framework for understanding recent work in topics such as restricted invertibility \cite{ICM} and Ramanujan graphs \cite{IF4}.
By linking these methods, we hope to provide a systematic way to use free probability as a tool in combinatorics and graph theory (and vice versa).

 % ff_free_prob is included here
\subsection{Organization}

We begin in Section~\ref{sec:prelims} by listing the collection of ideas and definitions that we will use. 
In Section~\ref{sec:marcus_transform}, we will introduce a transformation on finite multisets that we call the \marcus~transform, which will be useful in the computations of Section~\ref{sec:convolutions}.
In Section~\ref{sec:convolutions}, we show how the symmetric additive and symmetric multiplicative polynomial convolutions introduced in \cite{conv} relate to convolutions from free probability.
In Section~\ref{sec:ff_position}, we then examine the properties of finite 
freeness and use them to derive a collection of majorization inequalities.
Lastly, in Section~\ref{sec:applications}, we show some applications of the theory by deriving the finite versions of various laws in free probability and then deriving their associated limit theorems directly.
We also show how one the theory can be used to shine a more intuitive light on results such as Bourgain and Tzafriri's restricted invertibility theorem \cite{BT, ICM}.
We finish with some concluding remarks, suggestions for further research directions, and acknowledgements.

\section{Preliminaries} \label{sec:prelims}

Throughout the paper, we will focus on real symmetric matrices (although the 
results can easily be extended).
We will write $\rho_A$ to denote the largest eigenvalue of 
a matrix $A$ and $\tr{}$ to denote the normalized trace (so 
that $\tr{I} = 1$).
In the case of matrices, we will use $\Tr{}$ to denote the usual trace (so that $\Tr{I} = \dim(I)$).

The first part of this section will review the polynomial convolutions from \cite{conv} that will be the focus of this paper.
The remainder of the section will be used to introduce some of the tools that will be used in the analysis.

\subsection{Polynomial convolutions}
\label{sec:formulas}
Finite convolutions of polynomials were introduced in \cite{conv}.
We will be concerned with the real, symmetric cases.
Let $A$ and $B$ be $\M \times \M$ real symmetric matrices with 
\[
p(x) 
= \mydet{xI - A}
\AND
q(x) 
= \mydet{xI - B}.
\]
\begin{definition}
The {\em symmetric additive convolution} of $p$ and $q$ is defined as
\[
[p \sqsum_{\M} q](x) 
= \E{Q}{\mydet{xI - A - QBQ^T}}
\]
where the expectation is taken over orthonormal matrices $Q$ distributed uniformly (via the Haar measure).
For $A$ and $B$ positive definite, the {\em symmetric multiplicative convolution} of $p$ and $q$ is defined as
\[
[p \sqmult_{\M} q](x) 
= \E{Q}{\mydet{xI - AQBQ^T}}.
\]
\end{definition}
Furthermore, they note that when $p$ and $q$ each have all real roots, then both 
\[
[p \sqsum_{\M} q](x)
\AND
[p \sqmult_{\M} q](x)
\]
have all real roots, due to the (much more general) general theory developed in \cite{BB2}. 
The following linear formulas were proved as well:
\begin{theorem}
If
\[
p(x) 
= \sum_i x^{\M-i} (-1)^i p_i 
\AND
q(x) 
= \sum_i x^{\M-i} (-1)^i q_i 
\]
then
\begin{equation}
\label{eq:add_conv}
[p \sqsum_{\M} q](x) 
= \sum_{i + j \leq \M} x^{\M-i-j}(-1)^{i+j} \frac{(\M-i)!(\M-j)!}{(\M-i-j)!\M!} p_i q_j
\end{equation}
and
\begin{equation}
\label{eq:mult_conv}
[p \sqmult_{\M} q](x) 
= \sum_{i = 0}x^{\M-i-j}(-1)^i \frac{p_iq_i}{\binom{\M}{i}}
\end{equation}
\end{theorem}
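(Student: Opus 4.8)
The plan is to establish both formulas coefficient by coefficient. For an $\M\times\M$ matrix $M$ write $\sigma_k(M)$ for the $k$-th elementary symmetric function of its eigenvalues, equivalently the sum $\sum_{|S|=k}\mydet{M[S]}$ of its $k\times k$ principal minors; here $M[R\,|\,C]$ is the submatrix with rows indexed by $R$ and columns by $C$, and $M[S]=M[S\,|\,S]$. Then $p_i=\sigma_i(A)$, $q_j=\sigma_j(B)$, and $\mydet{xI-M}=\sum_k(-1)^k\sigma_k(M)\,x^{\M-k}$, so that (\ref{eq:add_conv}) and (\ref{eq:mult_conv}) are exactly the assertions that $\E{Q}{\sigma_k(A+QBQ^T)}$ and $\E{Q}{\sigma_k(AQBQ^T)}$ equal, respectively, $\sum_{i+j=k}\frac{(\M-i)!(\M-j)!}{(\M-i-j)!\,\M!}\,\sigma_i(A)\sigma_j(B)$ and $\sigma_k(A)\sigma_k(B)/\binom{\M}{k}$. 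I would prove these two statements.

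Two facts about the Haar measure on $O(\M)$ carry the whole argument, and I would prove them first. (i) For index sets $R\neq C$ of the same size, $\E{Q}{\mydet{(QBQ^T)[R\,|\,C]}}=0$: if $D_j$ is the identity matrix with its $j$-th diagonal entry negated, then $D_jQ$ has the same law as $Q$, hence $QBQ^T$ has the same law as $D_j(QBQ^T)D_j$; this scales the $(R\,|\,C)$ minor by $(-1)^{[j\in R]+[j\in C]}$, and choosing $j$ to lie in exactly one of $R$ and $C$ forces the expectation to be its own negative. (ii) For every $k$-set $S$, $\E{Q}{\mydet{(QBQ^T)[S]}}=\sigma_k(B)/\binom{\M}{k}$: conjugating $Q$ by a permutation matrix shows the expectation does not depend on $S$, and summing over all $\binom{\M}{k}$ principal minors recovers $\sigma_k(QBQ^T)=\sigma_k(B)$ since orthogonal conjugation does not move eigenvalues.

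Granting (i) and (ii), the multiplicative case is short. The Cauchy--Binet formula gives $\mydet{(AQBQ^T)[S]}=\sum_{|T|=k}\mydet{A[S\,|\,T]}\,\mydet{(QBQ^T)[T\,|\,S]}$; after taking expectations, (i) annihilates every term with $T\neq S$ and (ii) evaluates the rest, leaving $\E{Q}{\sigma_k(AQBQ^T)}=\big(\sum_{|S|=k}\mydet{A[S]}\big)\cdot\sigma_k(B)/\binom{\M}{k}=\sigma_k(A)\sigma_k(B)/\binom{\M}{k}$, which is (\ref{eq:mult_conv}). For the additive case I would expand $\mydet{(A+QBQ^T)[S]}$ by multilinearity in its rows, collect the resulting $2^k$ terms according to which subset $R\subseteq S$ of rows is taken from $A$, and Laplace-expand each term along the rows in $R$; passing to the expectation, (i) leaves only the ``diagonal'' Laplace term in each, so that $\E{Q}{\mydet{(A+QBQ^T)[S]}}=\sum_{R\subseteq S}\mydet{A[R]}\,\E{Q}{\mydet{(QBQ^T)[S\setminus R]}}$. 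Summing over $|S|=k$ and regrouping by $i=|R|$ and $j=|S\setminus R|$, each $R$ of size $i$ is completed to such an $S$ in exactly $\binom{\M-i}{j}$ ways, so (ii) collapses the sum to $\sum_{i+j=k}\big(\binom{\M-i}{j}/\binom{\M}{j}\big)\sigma_i(A)\sigma_j(B)$; since $\binom{\M-i}{j}/\binom{\M}{j}=(\M-i)!(\M-j)!/((\M-i-j)!\,\M!)$, this is (\ref{eq:add_conv}).

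The step I expect to be the main obstacle is the row-expansion bookkeeping in the additive case: one must check that after the sign-flip averaging the surviving contributions to $\mydet{(A+QBQ^T)[S]}$ are precisely the products $\mydet{A[R]}\mydet{(QBQ^T)[S\setminus R]}$ and that each carries coefficient $+1$, which amounts to tracking the signs in the Laplace expansion carefully. An alternative that hides some of this is to first prove the equivalent operator identity $\E{Q}{\mydet{xI-A-QBQ^T}}=\tfrac{1}{\M!}\sum_{j}(-1)^j(\M-j)!\,\sigma_j(B)\,\tfrac{d^j}{dx^j}\mydet{xI-A}$ and then read off the coefficients, but the underlying computation is the same. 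Everything else is routine linear algebra and counting once (i) and (ii) are in hand, and the same argument adapts to the extensions mentioned above.
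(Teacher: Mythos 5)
Your proof is correct, and it is essentially the standard derivation: the paper itself quotes this theorem from \cite{conv} without proof, and the argument there rests on exactly your two Haar-measure facts (vanishing of expected off-diagonal minors of $QBQ^T$ via sign-flip invariance, and the averaging identity $\E{Q}{\mydet{(QBQ^T)[S]}}=\sigma_k(B)/\binom{\M}{k}$), combined with Cauchy--Binet for the multiplicative case and the row-multilinearity/Laplace expansion for the additive case. The sign bookkeeping you flag does work out, since the only surviving Laplace term has row set equal to column set and hence carries sign $+1$.
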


\renewcommand{\deriv}[1]{ \frac{\partial}{\partial #1}}

\subsection{Laplace Transform}\label{sec:Laplace}
For a function $f$, the {\em Laplace transform} is defined as 
\[
\L{f}(s) 
= \int_0^{\infty} e^{-xs} f(x) \d{x}.
\]
The Laplace transform is quite useful combinatorially due to its ability to turn exponential generating functions into ordinary generating functions (and vice versa), as 
\begin{equation}
\label{eq:laplace_k}
\L{x^k}(s) 
= \frac{k!}{s^{k+1}}.
\end{equation}

Those uninterested in the details of convergence should feel free to simply treat the transform as a linear operator on power series (at least as far as this paper goes, very little would be lost in doing so).
In fact, many of the technical details in Section~\ref{sec:convolutions} (the 
computations regarding the domain of integration where the boundary ends up 
disappearing from the solution) are for the most part showing that we can 
safely treat the Laplace transform in exactly this way.

\subsection{Legendre Transform}\label{sec:Legendre}

Let $f$ be a function that is convex on an interval $X \subseteq \R$.
The {\em Legendre transform} is defined to be the function 
\begin{equation}\label{eqn:legendre}
f^*(s) 
= \sup_{x \in X} \left\{  xs - f(x) \right\}
\end{equation}
where the domain of $f^*$ is the space
\[
X^*
= \left \{x^* \in \R : \sup_{x \in X} \{ x x^*- f(x) \} <\infty \right \}.
\]
Note that the convention is to have $f^*$ be a function in the variable $p$, but we will use $p$ for other purposes, and so we will use the variable $s$.
In the case that $f$ is differentiable, one has the following relation:
\begin{lemma}\label{lem:legendre}
Let $f$ be strictly convex on $X$ and differentiable at a point $z \in X$.
Then $f'(z) \in X^*$ and
\[
f^*(f'(z)) 
= zf'(z) - f(z).
\]
\end{lemma}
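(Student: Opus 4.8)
The plan is to exploit the fact that for strictly convex differentiable $f$, the supremum defining $f^*(s)$ is attained where the derivative of the inner expression $xs - f(x)$ vanishes, i.e. where $f'(x) = s$. First I would fix $z \in X$ and set $s = f'(z)$; the goal is to show that the function $g(x) := xs - f(x) = x f'(z) - f(x)$ attains its supremum over $X$ precisely at $x = z$, and that this supremum is finite (which simultaneously gives $f'(z) \in X^*$ and the claimed formula $f^*(f'(z)) = z f'(z) - f(z)$).

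The key step is the following: by strict convexity of $f$, the derivative $f'$ is strictly increasing on $X$ wherever it is defined; more usefully, strict convexity gives the subgradient inequality $f(x) \geq f(z) + f'(z)(x - z)$ for all $x \in X$, with equality only at $x = z$. (This inequality holds at every point $x \in X$, not merely where $f$ is differentiable, because $f$ convex on an interval has one-sided derivatives everywhere and $f'(z)$ lies between the left and right derivatives at every point relative to $z$; strictness of the inequality away from $z$ follows from strict convexity.) Rearranging this inequality gives
\[
x f'(z) - f(x) \leq z f'(z) - f(z)
\]
for all $x \in X$, with equality exactly when $x = z$. Hence $g(x) \leq g(z)$ everywhere, so the supremum in $(\ref{eqn:legendre})$ equals $g(z) = z f'(z) - f(z)$, which is a finite real number. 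This shows $f'(z) \in X^*$ by the very definition of $X^*$, and it shows $f^*(f'(z)) = z f'(z) - f(z)$.

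The main obstacle — really the only subtlety — is justifying the subgradient inequality cleanly when $z$ is an endpoint of $X$ or when $f$ is only assumed differentiable \emph{at} $z$ rather than on all of $X$. I would handle this by invoking the standard fact that a convex function on an interval admits finite one-sided derivatives at every interior point and that the secant slopes $\frac{f(x) - f(z)}{x - z}$ are monotone in $x$; combined with differentiability at $z$, this pins the secant slopes below $f'(z)$ for $x < z$ and above $f'(z)$ for $x > z$, yielding the inequality, and strict convexity upgrades it to a strict inequality for $x \neq z$. If one is willing to assume (as is typical in this setting) that $f$ is differentiable throughout $X$, this is immediate from the fundamental theorem of calculus applied to the increasing function $f'$, and the proof is a two-line computation.
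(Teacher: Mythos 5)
Your proof is correct and is essentially the same argument as the paper's: both rest on the tangent-line inequality $f(x) \geq f(z) + (x-z)f'(z)$ (strict for $x \neq z$ by strict convexity), rearranged so that the supremum defining $f^*(f'(z))$ is attained at $x = z$ with finite value $zf'(z) - f(z)$. Your extra care in justifying the inequality via monotone secant slopes when $f$ is only assumed differentiable at $z$ is a refinement the paper glosses over, but it does not change the route.
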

\begin{proof}
Let $z \in X$.
Since $f$ is differentiable at $z$ and strictly convex, it satisfies the inequality 
\[
f(x) 
\geq f(z) + (x - z)f'(z)
\]
for all $x \in X$, with equality if and only if $x = z$.
Rearranging, we have
\[
\sup_{x \in X} \{ xf'(z) - f(x) \} 
= zf'(z) - f(z) < \infty
\]
which, by definition, means $f'(z) \in X^*$ and $f^*(f'(z)) = zf'(z) - f(z)$.
\end{proof}

Many of the useful properties of the Legendre transform follow directly from Lemma~\ref{lem:legendre}.
In particular, it implies that $f^*$ is strictly convex whenever $f$ is twice differentiable.

\begin{corollary}\label{cor:legendre2}
Let $f$ satisfy the conditions of Lemma~\ref{lem:legendre}.
Then 
\begin{enumerate}
\item $f'({f^*}'(x)) = x$
\item $f''({f^*}'(x)) = 1 / {f^*}''(x)$
\end{enumerate}
\end{corollary}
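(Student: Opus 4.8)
The plan is to differentiate the identity from Lemma~\ref{lem:legendre} and apply the chain rule. The key observation is that Lemma~\ref{lem:legendre} gives us a parametric description of the Legendre transform: writing $z$ for a point in $X$ and setting $s = f'(z)$, we have $f^*(s) = zs - f(z)$ with $z = z(s)$ determined implicitly by $f'(z) = s$. Differentiating both sides of $f^*(f'(z)) = zf'(z) - f(z)$ with respect to $z$, the right-hand side gives $f'(z) + zf''(z) - f'(z) = zf''(z)$, while the left-hand side gives ${f^*}'(f'(z)) \cdot f''(z)$. Since $f$ is strictly convex, $f''(z) \neq 0$ (at least generically; one should note $f$ twice differentiable is being assumed implicitly here, as the corollary references ${f^*}''$), so we may cancel to obtain ${f^*}'(f'(z)) = z$. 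This says that ${f^*}'$ is the inverse function of $f'$, which immediately yields part (1): substituting $z = {f^*}'(x)$ and using $f'(z) = x$ (valid since $f'$ and ${f^*}'$ are mutually inverse on the appropriate domains) gives $f'({f^*}'(x)) = x$.

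For part (2), I would differentiate the relation $f'({f^*}'(x)) = x$ from part (1) with respect to $x$. The chain rule gives $f''({f^*}'(x)) \cdot {f^*}''(x) = 1$, which rearranges to $f''({f^*}'(x)) = 1/{f^*}''(x)$ as claimed. Alternatively, one can differentiate ${f^*}'(f'(z)) = z$ with respect to $z$ to get ${f^*}''(f'(z)) \cdot f''(z) = 1$, and then substitute $z = {f^*}'(x)$; both routes are routine once part (1) is in hand.

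**The main obstacle** — really more of a bookkeeping point than a genuine difficulty — is making the domains precise and justifying the differentiations. One needs that ${f^*}'$ exists, i.e. that $f^*$ is differentiable on the interior of $X^*$; this follows from the parametric formula $f^*(s) = z(s)s - f(z(s))$ together with the fact that $z(s) = (f')^{-1}(s)$ is itself differentiable, which in turn requires $f''$ to be nonvanishing (another reason twice-differentiability with $f'' > 0$ is the right hypothesis). One also needs to know that $x$ ranges over a set on which ${f^*}'(x)$ lands inside $X$ so that $f'$ can be evaluated there — this is exactly the content of the mutual-inverse statement, so there is a mild circularity to untangle, best handled by first establishing ${f^*}'(f'(z)) = z$ for all $z \in X$ (or the interior thereof) and then reading off both directions of the inverse relationship. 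I would state the corollary as holding on the interior of $X^*$ (equivalently, for $x = f'(z)$ with $z$ in the interior of $X$), which sidesteps boundary pathologies. Since the excerpt explicitly remarks that Lemma~\ref{lem:legendre} implies $f^*$ is strictly convex when $f$ is twice differentiable, the intended reading is clearly that $f$ is twice differentiable with $f'' > 0$, and under that assumption all the steps above go through cleanly.
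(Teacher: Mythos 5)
Your proposal is correct and is exactly the paper's approach: the paper's entire proof is ``Take derivatives of the formula in Lemma~\ref{lem:legendre},'' and you have simply carried out that differentiation carefully, including the cancellation of $f''(z)$ and the second differentiation for part (2). Your remarks on twice-differentiability and domains are reasonable elaborations of hypotheses the paper leaves implicit.
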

\begin{proof}
Take derivatives of the formula in Lemma~\ref{lem:legendre}.
\end{proof}
%A theorem of Fenchel and Moreau asserts this to be true even when $f$ lacks the properties in Lemma~\ref{lem:legendre}.
%\begin{theorem}[Fenchel--Moreau]
%Let $f:X \to \R$ be a function.
%Then $f^*$ is convex and the double transform $f^{**}$ satisfies
%\[
%\{ (x, y) : y \geq f^{**}(x) \} = \mathrm{conv}\left(  \{ (x, y) : y \geq f(x) \} \right).
%\]
%In particular, $f^{**} = f$ if and only if $f$ is convex and lower semicontinuous.
%\end{theorem}

The Legendre transform appears in many areas under many different names.
It is often referred to as the {\em convex conjugate} in the analysis literature and as the {\em Fenchel transform} (or {\em Legendre--Fenchel transform}) in optimization.
In particular, the term ``convex conjugate'' is typically applied when one applies Equation~(\ref{eqn:legendre}) to more general Banach spaces, and the term ``Fenchel dual'' is typically applied when one applies Equation~(\ref{eqn:legendre}) in primal--dual algorithms of convex programming. 
In this paper, we use only the most basic facts from theory, and as result, we will maintain the nomenclature {\em Legendre transform} to highlight this fact. 

\subsection{\texorpdfstring{$L^p$}{Lp} norms}\label{sec:norms}

The main tool we will use is the theory of $L^p$ spaces from Banach space theory.
Given a measure space $(X, \mu)$, 
For $0 < p < \infty$, the $L^p$-norm of a function $f$ is defined to be
\[
\mynorm{f}{p} 
= \left( \int |f|^p \d{\mu} \right)^{\frac{1}{p}}
\]
and for $p = \infty$, we have 
\[
\infnorm{f} 
= \lim_{p \to \infty} \mynorm{f}{p} 
= \inf \left\{ a \geq 0 : \mu(\{ x : |f(x)| > a \}) = 0 \right\}
\]
We will only use one simple result from the theory of $L^p$ spaces:
\begin{lemma}\label{lem:inf_norm}
If $\mu$ is absolutely continuous with respect to the Lebesgue measure and $f$ is continuous, then
\[
\infnorm{f} 
= \sup_{x \in X} \{ |f(x)| \}
\]
\end{lemma}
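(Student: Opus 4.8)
The plan is to prove the two inequalities $\infnorm{f} \leq \sup_{x \in X} |f(x)|$ and $\infnorm{f} \geq \sup_{x \in X} |f(x)|$ separately. The first is immediate from the definition of the essential supremum; the second is where continuity of $f$ and the hypothesis on $\mu$ actually enter.

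For the first inequality, write $M = \sup_{x \in X} |f(x)|$. If $M = \infty$ there is nothing to prove, so assume $M < \infty$. Then the set $\{x : |f(x)| > M\}$ is empty, and in particular has $\mu$-measure zero, so the definition $\infnorm{f} = \inf\{a \geq 0 : \mu(\{|f| > a\}) = 0\}$ gives $\infnorm{f} \leq M$ directly.

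For the reverse inequality I would argue by contradiction. Suppose $\infnorm{f} < M$, and pick a point $x_0 \in X$ together with a real number $b$ satisfying $\infnorm{f} < b < |f(x_0)| \leq M$ (such an $x_0$ exists because $b$ can be chosen strictly below $M = \sup|f|$ and strictly above $\infnorm{f}$). Since $f$ is continuous at $x_0$, there is a relatively open neighbourhood $U \subseteq X$ of $x_0$ on which $|f(x)| > b$, so $U \subseteq \{x : |f(x)| > b\}$. On the other hand, because $b > \infnorm{f}$, monotonicity of $\mu$ together with the defining infimum forces $\mu(\{|f| > b\}) = 0$, hence $\mu(U) = 0$. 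But $U$ is a nonempty relatively open subset of the interval $X$, so it has positive Lebesgue measure, and since $\mu$ is (mutually) absolutely continuous with respect to Lebesgue measure it therefore has positive $\mu$-measure — a contradiction. Thus $\infnorm{f} \geq M$, and combined with the first part this proves the claim.

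The step I expect to need the most care is the very last one: upgrading ``positive Lebesgue measure'' to ``positive $\mu$-measure.'' Absolute continuity of $\mu$ with respect to Lebesgue measure, taken literally, only says Lebesgue-null sets are $\mu$-null, whereas the contradiction needs the converse direction (equivalently, that $\mu$ charges every nonempty relatively open subset of $X$, i.e. that $\mu$ has full support on $X$). So in the write-up I would either read the hypothesis as saying $\mu$ and Lebesgue measure are mutually absolutely continuous on $X$ (for instance, $\mu$ has a strictly positive density), or restrict the supremum to the topological support of $\mu$; under either reading the argument above goes through unchanged, with the support property supplying exactly the inequality $\mu(U) > 0$ that the contradiction requires.
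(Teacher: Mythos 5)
The paper states this lemma without proof (it is presented as a standard fact from $L^p$ theory), so there is no argument of the paper's to compare against; your proposal supplies the standard proof, and it is correct under the reading you identify. Your two-inequality decomposition is the right one, and the point you flag at the end is a genuine issue with the statement as written, not merely a matter of care: one-sided absolute continuity of $\mu$ with respect to Lebesgue measure does not rule out $\mu$ vanishing on a nonempty open subset of $X$. For instance, take $X = [0,2]$, let $\mu$ be Lebesgue measure restricted to $[0,1]$ (absolutely continuous with respect to Lebesgue measure on $X$), and let $f$ be continuous with $f \equiv 0$ on $[0,1]$ and $f(2)=1$; then $\sup_{x\in X}|f(x)| = 1$ while $\infnorm{f} = 0$. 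So the lemma literally requires either mutual absolute continuity (e.g.\ a strictly positive density) or that the supremum be taken over the topological support of $\mu$, exactly as you say. In the paper this never causes trouble, because immediately after Corollary~\ref{cor:sup_norm} the author fixes $\mu$ to be Lebesgue measure itself on the stated domain of integration, in which case every nonempty relatively open set has positive $\mu$-measure and your contradiction argument closes. One further small remark: the paper defines $\infnorm{f}$ both as $\lim_{p\to\infty}\mynorm{f}{p}$ and as the essential supremum; your proof works with the latter, which is the definition actually used downstream, so nothing is lost, but strictly speaking the equivalence of those two definitions is itself a (standard) fact requiring $f\in L^{p_0}$ for some finite $p_0$ or $\mu(X)<\infty$.
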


The following simple observation relates the Legendre transform to $L^{p}$ spaces.
\begin{corollary}\label{cor:sup_norm}
Let $X$ be a subset of the real line and $\mu$ a measure that is absolutely continuous with respect to the Lebesgue measure.
Then for any continuous function $f: X \to \R$, we have
\[
f^*(s) 
= \ln \infnorm{ e^{xs - f(x)} }
\]
for all $s \in X^*$ (where both the Legendre transform and $L^{\infty}$ norm are taken over the space $X$).
\end{corollary}
\begin{proof}
Since $f$ is real, $f^*$ is real as well.
Hence we can write $f^*(s)$ as 
\begin{align*}
f^*(s) 
&= \ln(\exp(f^*(s))
\\&= \ln \left(\exp \left( \sup_{x \in X}\left\{  xs - f(x) \right\} \right) \right)
\\&= \ln \left( \sup_{x \in X}\left\{  \exp( xs - f(x)) \right\} \right).
\end{align*}
Since $f$ is real-valued, $\exp( xs - f(x))$ is strictly nonnegative, and so 
\[
\ln \left( \sup_{x \in X}\left\{  |\exp( xs - f(x))| \right\} \right) 
= \ln \infnorm{ e^{xs - f(x)} }
\]
as required.
\end{proof}

For what follows, we will always have $\mu$ be the Lebesgue measure with the 
domain of integration assumed to be the real line.
In the case that we want to integrate on a smaller domain, we will explicitly set the function to $0$ as in Equation~(\ref{eq:fkdet}).
We will also use the convention that all norms should be taken with respect to the variable $x$.
That is, when we write
\[
\mynorm{f(x, s)}{p}
\]
we mean 
\[
\mynorm{f(\cdot, s)}{p}
\]
and so the former should always be seen as a function of $s$.

\subsection{Fuglede--Kadison determinants}\label{sec:FK}

Rather than give the general definition of the Fuglede--Kadison determinant in 
operator theoretic terms, we will simply define it for the case in which we 
will need it.
For a finite dimensional $\M \times \M$ positive definite matrix $A$, we define the {\em normalized determinant} to be
\begin{equation}\label{eq:fkdet}
\fkdet{A} = 
\begin{cases}
\mydet{A}^{1/\M} & \text{if $A$ is positive definite} \\
 0 & \text{otherwise}
\end{cases}
\end{equation}
Fuglede and Kadison showed that for a sequence of positive definite matrices 
$A_1, A_2, \dots, $ for which the spectral measure converges in a suitable way 
to a limiting operator $a$, the normalized determinant converges to a 
well-defined limit and the limiting operator has many of the usual properties 
that one expects in a determinant (for example, multiplicativity) 
\cite{FugledeKadison}.

The Fuglede--Kadison determinant is typically defined in much greater generality, however we will only need the case where it acts as a normalized version of the determinant function (the goal being to trade generality (and perhaps rigor) for readability by those not familiar with $C^*$-algebras).
The only nontrivial property that we will use is that $\fkdet{}$ is well-defined and is a limit of its values on a suitable sequence of matrices.
It should be noted that the Fuglede--Kadison determinant (in the greater generality) already plays a role in the theory of free probability in the form of {\em Brown measures} (see \cite{deninger} for a survey of such connections).
It is unclear whether there is any relationship between the methods in this paper and such results.

\subsection{Mixed discriminants}\label{sec:md}

Let $X_1, \dots, X_{\M}$ be $\M \times \M$ matrices.
We define the {\em mixed discriminant} to be
\[
\D{X_1, \dots, X_{\M}} 
= \frac{\partial^{\M}}{\partial t_1 \dots \partial t_{\M}} \mydet{ \sum_{i=1}^{\M} t_i X_i }
\]

\begin{remark}
Note that our definition of the mixed discriminant differs by a factor of $\M!$ from many other treatments.
The literature is far from standard in this respect, and our reason for taking this normalization is that it will simplify a number of the formulas we will use.
\end{remark}

The properties of mixed discriminants are well studied (see, for example \cite{bapat1989mixed}), and appear in numerous contexts.
Some of the more well-known properties are the following:
\begin{lemma}
Let $A, B, X_2, \dots, X_{\M}$ be $\M \times \M$ matrices and let $c$ be a scalar.
Then 
\begin{enumerate}
\item $\D{}$ is invariant under permutation of its arguments
\item $\D{A + cB, X_2, \dots, X_{\M}} = \D{A, X_2, \dots, X_{\M}} + c\D{B, X_2, \dots, X_{\M}}$
\item $\D{A, \dots, A} = \M! \mydet{A}$
\item $\D{AX_1, \dots, AX_{\M}} = \mydet{A} \D{X_1, \dots, X_{\M}}$
\end{enumerate}
\end{lemma}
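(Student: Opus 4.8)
The plan is to work straight from the definition
\[
\D{X_1, \dots, X_{\M}}
= \frac{\partial^{\M}}{\partial t_1 \cdots \partial t_{\M}} \mydet{ \sum_{i=1}^{\M} t_i X_i },
\]
together with one structural observation: the polynomial $P(t_1, \dots, t_\M) := \mydet{\sum_i t_i X_i}$ is homogeneous of degree $\M$ in $t_1, \dots, t_\M$, since the entries of $\sum_i t_i X_i$ are linear in $t$ and $\det$ is homogeneous of degree $\M$ in the entries of a $\M \times \M$ matrix. Applying $\partial/\partial t_1 \cdots \partial/\partial t_\M$ annihilates every monomial in which some $t_i$ is missing, and homogeneity of degree $\M$ forces a monomial in which every $t_i$ is present to be exactly $t_1 t_2 \cdots t_\M$; hence $\D{X_1, \dots, X_\M}$ equals the coefficient of $t_1 \cdots t_\M$ in $P$, and in particular it is a constant (independent of $t$). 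All four statements follow quickly from this.

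For (1), fix a permutation $\sigma$ of $[\M]$ and substitute $t_i = s_{\sigma(i)}$; then $\sum_i t_i X_{\sigma(i)} = \sum_j s_j X_j$, while $\partial/\partial t_1 \cdots \partial/\partial t_\M = \partial/\partial s_1 \cdots \partial/\partial s_\M$ (each $\partial/\partial t_i$ is $\partial/\partial s_{\sigma(i)}$, and the order of the iterated partials is irrelevant since $P$ is a polynomial, hence smooth, by Clairaut's theorem). Comparing the two expressions gives $\D{X_{\sigma(1)}, \dots, X_{\sigma(\M)}} = \D{X_1, \dots, X_\M}$. For (2), by (1) it suffices to prove linearity in the first slot. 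Write $W = \sum_{i \geq 2} t_i X_i$ and expand $\mydet{t_1 Y + W}$ using multilinearity of the determinant in its columns: the coefficient of $t_1^1$ is the sum over $j$ of the determinant of the matrix obtained from $W$ by replacing its $j$th column with the $j$th column of $Y$, which is manifestly linear in the entries of $Y$. Extracting the coefficient of $t_2 \cdots t_\M$ preserves that linearity, so $\D{A + cB, X_2, \dots, X_\M} = \D{A, X_2, \dots, X_\M} + c\,\D{B, X_2, \dots, X_\M}$.

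For (3), set $X_i = A$ for all $i$: then $P(t) = \mydet{(t_1 + \cdots + t_\M) A} = (t_1 + \cdots + t_\M)^\M \mydet{A}$, and the multinomial theorem gives that the coefficient of $t_1 \cdots t_\M$ in $(t_1 + \cdots + t_\M)^\M$ is $\M!$, so $\D{A, \dots, A} = \M!\,\mydet{A}$. For (4), since $\sum_i t_i (A X_i) = A \sum_i t_i X_i$, multiplicativity of the determinant gives $\mydet{\sum_i t_i A X_i} = \mydet{A}\,\mydet{\sum_i t_i X_i}$, and pulling the constant $\mydet{A}$ through the derivative operator yields $\mydet{A}\,\D{X_1, \dots, X_\M}$. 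I do not anticipate a genuine obstacle here; the only points needing a word of care are the appeal to Clairaut's theorem in (1) to reorder the partials after the change of variables, and, in (2), verifying that ``linear in the first argument'' survives the subsequent extraction of the $t_2 \cdots t_\M$ coefficient — which is exactly why I would route (2) through (1) and record the conclusion as full multilinearity of $\D{}$ in all of its arguments.
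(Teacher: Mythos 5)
The paper does not actually prove this lemma --- it lists the four properties as well known and points the reader to the literature (e.g.\ \cite{bapat1989mixed}) --- so there is no in-paper argument to compare against; your job here is simply to supply a correct verification, and you have. The key observation that $P(t_1,\dots,t_{\M})=\mydet{\sum_i t_i X_i}$ is homogeneous of degree $\M$, so that $\D{X_1,\dots,X_{\M}}$ is exactly the coefficient of $t_1\cdots t_{\M}$ in $P$, is the right reduction, and each of the four parts then goes through as you say: the change of variables plus Clairaut for permutation invariance; multilinearity of $\det$ in columns (restricted to the terms with exactly one column drawn from $t_1 Y$) for linearity in the first slot, which combined with (1) gives full multilinearity; the multinomial coefficient $\M!$ of $t_1\cdots t_{\M}$ in $(t_1+\cdots+t_{\M})^{\M}$ for (3), consistent with the paper's nonstandard normalization (no $1/\M!$ in the definition); and multiplicativity of $\det$ for (4). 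The only stylistic remark is that you could have stated the coefficient-extraction fact once as "the mixed discriminant is the unique symmetric multilinear form agreeing with $\M!\mydet{A}$ on the diagonal" and derived everything from that, but your direct route is equally valid and arguably more self-contained.
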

One corollary of these properties is that the determinant can be decomposed in a manner similar to the binomial theorem: 
\begin{equation}\label{eq:binomial}
\mydet{xI + A} 
= \sum_i x^{\M-i} \binom{\M}{i} \D{A[i], I[\M-i]}.
\end{equation}

While the mixed discriminant clearly has a close relationship to the determinant, it is also closely related to the {\em permanent}.
In particular, when $X_1, \dots, X_{\M}$ are {\em diagonal} matrices, one gets the formula  
\begin{equation}\label{eq:permanent}
\D{X_1, \dots, X_{\M}} 
= \mathrm{Perm}(Q)
\end{equation}
where the columns of $Q$ are the diagonals of the $X_i$.

The only nonstandard property of the mixed discriminant that we will use was proven in \cite{bcms} and can be seen as a type of distributivity:
\begin{lemma}\label{lem:trace_formula}
Let $A$ and $X_1, \dots, X_{\M}$ be $\M \times \M$ matrices.
Then 
\[
\Tr{A}\D{X_1, \dots, X_{\M}} 
= \sum_{i=1}^{\M} \D{X_1, \dots, X_{i-1}, A X_i, X_{i+1}, \dots, X_{\M}}
\]
\end{lemma}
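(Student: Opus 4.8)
The plan is to combine the multilinearity of the mixed discriminant with the multiplicativity of the determinant, using the perturbation $I + sA$ purely as a bookkeeping device. First I would record that $\mydet{\sum_{i=1}^{\M} t_i X_i}$ is a homogeneous polynomial of degree $\M$ in $t_1, \dots, t_{\M}$, so applying $\partial^{\M}/(\partial t_1 \cdots \partial t_{\M})$ annihilates every monomial except $t_1 t_2 \cdots t_{\M}$ and simply reads off its coefficient. Hence $\D{X_1, \dots, X_{\M}}$ is exactly the coefficient of $t_1 t_2 \cdots t_{\M}$ in $\mydet{\sum_i t_i X_i}$; as a sanity check against property~3 of the preceding lemma, the coefficient of $t_1 \cdots t_{\M}$ in $\left(\sum_i t_i\right)^{\M}\mydet{A}$ is $\M!\,\mydet{A}$.

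Next I would introduce the identity of polynomials in the scalar $s$ and the variables $t_i$,
\[
\mydet{(I + sA)\sum_{i=1}^{\M} t_i X_i}
= \det(I + sA)\cdot\mydet{\sum_{i=1}^{\M} t_i X_i}
= \mydet{\sum_{i=1}^{\M} t_i\,(X_i + sA X_i)},
\]
where the first equality is multiplicativity of the determinant and the second is obtained by distributing $(I + sA)$ through the sum and using $(I + sA)X_i = X_i + sAX_i$. Extracting the coefficient of $t_1 t_2 \cdots t_{\M}$ from the two ends: the left end gives $\det(I + sA)\,\D{X_1, \dots, X_{\M}}$, while, by the first observation, the right end gives $\D{X_1 + sAX_1, \dots, X_{\M} + sAX_{\M}}$. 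By multilinearity of $\D{}$ (linearity in each slot, property~2, together with permutation invariance, property~1), the latter expands as $\sum_{S \subseteq \{1, \dots, \M\}} s^{|S|}\,\D{Y_1^S, \dots, Y_{\M}^S}$ with $Y_i^S = AX_i$ when $i \in S$ and $Y_i^S = X_i$ otherwise.

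Finally I would equate the coefficients of $s^{1}$ on the two sides. On the left, $\det(I + sA) = 1 + s\,\Tr{A} + O(s^2)$, so the $s^{1}$ coefficient is $\Tr{A}\,\D{X_1, \dots, X_{\M}}$. On the right, the $s^{1}$ term comes precisely from the singletons $S = \{i\}$, contributing $\sum_{i=1}^{\M} \D{X_1, \dots, X_{i-1}, AX_i, X_{i+1}, \dots, X_{\M}}$. Matching the two yields the claim.

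I do not expect a serious obstacle here; the two points that need care are the identification of $\D{}$ with a coefficient extraction (and checking that this matches the $\M!$-free normalization used in the paper) and the legitimacy of invoking full multilinearity, which relies on permutation invariance in addition to single-slot linearity. If a more hands-on route is preferred, one can instead reduce to rank-one matrices by multilinearity: for $X_i = u_i v_i^{\top}$ one has $\D{X_1, \dots, X_{\M}} = \mydet{u_1 \mid \cdots \mid u_{\M}}\,\mydet{v_1 \mid \cdots \mid v_{\M}}$, and the identity becomes the classical $\sum_{i} \mydet{u_1 \mid \cdots \mid Au_i \mid \cdots \mid u_{\M}} = \Tr{A}\,\mydet{u_1 \mid \cdots \mid u_{\M}}$, which follows from $\Tr{U^{-1}AU} = \Tr{A}$ for invertible $U = [u_1, \dots, u_{\M}]$ and then by continuity. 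I would present the $I + sA$ argument as the main proof, since it avoids any case analysis.
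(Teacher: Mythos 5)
Your proof is correct. Note that the paper does not actually prove this lemma: it simply cites \cite{bcms} for it, so there is no internal argument to compare against. Your derivation stands on its own and is sound. The key identification --- that with the paper's normalization (no $1/\M!$) the mixed discriminant $\D{X_1,\dots,X_\M}$ is exactly the coefficient of $t_1\cdots t_\M$ in $\mydet{\sum_i t_i X_i}$ --- is right, and your sanity check against $\D{A,\dots,A}=\M!\,\mydet{A}$ confirms the normalization. From there, equating the $t_1\cdots t_\M$ coefficient in $\det(I+sA)\,\mydet{\sum_i t_i X_i} = \mydet{\sum_i t_i(X_i+sAX_i)}$ and then the $s^1$ coefficient, using $\det(I+sA)=1+s\,\Tr{A}+O(s^2)$ on one side and full multilinearity (single-slot linearity plus permutation invariance) on the other, gives precisely the claimed identity. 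The alternative rank-one reduction you sketch is also valid, since both sides of the identity are multilinear in $(X_1,\dots,X_\M)$ and every matrix is a sum of rank-one matrices; the formula $\D{u_1v_1^{\top},\dots,u_\M v_\M^{\top}} = \det[u_1|\cdots|u_\M]\det[v_1|\cdots|v_\M]$ is consistent with the paper's normalization. Either route would serve as a self-contained replacement for the external citation.
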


While we will be using the mixed discriminant in the context of real symmetric matrices, it is worth mentioning that all of the properties mentioned here hold for arbitrary square matrices (except Equation~(\ref{eq:permanent}), obviously, which requires the matrices to be diagonal).

\section{\marcus~Transform}\label{sec:marcus_transform}

Given a multiset $S$, we will write $|S|$ for the number of elements in the multiset (with multiplicity).
To be pedantic, $|\{ 1, 2, \iu, \iu \}| = 4$.
To abuse notation, we will often treat multisets as though they were random variables.
In any such case, the random variable should be considered to be uniformly distributed on the elements of the multiset.
For example, 
\[
\frac{1}{|S|} \sum_{s_i \in S} f(s_i)
\AND
\E{}{f(S)}
\]
will be used interchangably.

\begin{lemma}\label{lem:marcus_transform}
Let $S$ be a finite multiset of complex numbers with $|S| = \M$.
Then there exists a unique multiset $T$ of complex numbers with $|T| = \M$ such that
\[
\prod_{s_i \in S} (x - s_i) 
= \E{}{(x-T)^{\M}}.
\]
for all $x$.
\end{lemma}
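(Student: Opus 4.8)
The plan is to match coefficients on the two sides, which turns the statement into a standard fact about recovering a multiset from its power sums. Write $p(x) = \prod_{s_i \in S}(x-s_i) = \sum_{k=0}^{\M}(-1)^k e_k(S)\,x^{\M-k}$, where $e_k$ denotes the $k$-th elementary symmetric polynomial of the multiset in question. Expanding the candidate right-hand side by the binomial theorem gives
\[
\E{}{(x-T)^{\M}} = \frac1\M\sum_{t_j\in T}(x-t_j)^{\M} = \sum_{k=0}^{\M}(-1)^k\binom{\M}{k}\Bigl(\frac1\M\sum_{t_j\in T}t_j^{\,k}\Bigr)x^{\M-k}.
\]
Hence the identity $p(x)=\E{}{(x-T)^{\M}}$ holds for all $x$ if and only if the power sums $P_k(T):=\sum_{t_j\in T}t_j^{\,k}$ satisfy
\[
P_k(T) = \frac{\M}{\binom{\M}{k}}\,e_k(S),\qquad k=1,\dots,\M,
\]
the $k=0$ equation being automatic since $|T|=\M$. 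So the lemma is equivalent to the claim that a size-$\M$ multiset of complex numbers is determined uniquely by, and realizes arbitrary prescribed values of, its first $\M$ power sums.

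That claim is exactly the content of Newton's identities over a field of characteristic zero. The recursion $k\,e_k(T)=\sum_{i=1}^{k}(-1)^{i-1}e_{k-i}(T)\,P_i(T)$ can be solved for $e_k(T)$ (division by $k$ being legitimate because we work over $\C$), so $e_1(T),\dots,e_{\M}(T)$ are polynomial functions of $P_1(T),\dots,P_{\M}(T)$, and conversely. Consequently the prescribed values of $P_1,\dots,P_{\M}$ determine the numbers $e_1(T),\dots,e_{\M}(T)$, and hence the monic polynomial $q(x):=\sum_{k=0}^{\M}(-1)^k e_k(T)\,x^{\M-k}$, uniquely.

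For existence, let $q$ be the monic degree-$\M$ polynomial produced from the values $P_k=\M\,e_k(S)/\binom{\M}{k}$ by the Newton recursion, and use the fundamental theorem of algebra to factor $q(x)=\prod_{j=1}^{\M}(x-t_j)$ over $\C$; set $T=\{t_1,\dots,t_{\M}\}$. By construction $T$ has the required power sums, so reversing the coefficient computation of the first paragraph yields $\E{}{(x-T)^{\M}}=p(x)$. For uniqueness, any multiset $T'$ with $|T'|=\M$ and $\E{}{(x-T')^{\M}}=p(x)$ has, by the first paragraph, the same first $\M$ power sums as $T$, hence the same elementary symmetric functions, hence $\prod_j(x-t'_j)=q(x)=\prod_j(x-t_j)$; unique factorization of polynomials over $\C$ then forces $T'=T$ as multisets. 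The argument is routine bookkeeping except for one genuine input: the invertibility of the passage between power sums and elementary symmetric functions, which is precisely where the characteristic-zero (equivalently, complex-number) hypothesis is used, and without which the statement would fail.
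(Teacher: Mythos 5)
Your proof is correct and follows essentially the same route as the paper's: equate coefficients to obtain prescribed values for the first $\M$ power sums of $T$, pass to elementary symmetric functions via Newton's identities (using characteristic zero), and invoke existence and uniqueness of the $\M$ roots of the resulting monic degree-$\M$ polynomial over $\C$. The paper's version is just a compressed statement of the same argument, so nothing further is needed.
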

\begin{proof}
Clearly both sides of the equation are monic polynomials, so it suffices to prove that the equality holds for each of the other $\M$ coefficients.
Each such equality can be seen as a constraint on $\E{}{T^k}$ for $1 \leq k \leq \M$.
Using Newton's identities, this is equivalent to having constraints on the first $\M$ elementary symmetric functions of the elements of $T$.
However, this is equivalent to having $\M$ solutions to a polynomial of degree $\M$, which is true (and unique) over the complex numbers.
\end{proof}
Given a multiset $S$, we will refer to the multiset $T$ which satisfies the constraints of Lemma~\ref{lem:marcus_transform} as the {\em \marcus~transform} of $S$.
The property of having $|T| = |S|$ is an important one, as the next lemma will show.
\begin{lemma}\label{lem:real}
Let $S$ be a multiset of real numbers and $T$ its \marcus~transform.
Then 
\[
\E{}{f(T)} \in \R.
\]
for any function $f$ that is analytic on the support of $T$.
\end{lemma}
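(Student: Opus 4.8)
The plan is to argue that the power sums $\E{}{T^k}$ are all real, and that this forces $\E{}{f(T)}$ to be real for any $f$ analytic on the support of $T$. First I would observe that the defining identity $\prod_{s_i \in S}(x - s_i) = \E{}{(x-T)^{\M}}$, upon expanding $\E{}{(x-T)^{\M}}$ via the binomial theorem, expresses each coefficient of the left-hand polynomial as a scalar multiple of $\E{}{T^k}$ for $0 \leq k \leq \M$. Since $S$ consists of real numbers, $\prod_{s_i \in S}(x-s_i)$ has real coefficients, and inverting the (triangular, with nonzero diagonal) linear system shows $\E{}{T^k} \in \R$ for every $0 \leq k \leq \M$. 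Equivalently, the first $\M$ power sums of the multiset $T$ are real, hence by Newton's identities so are the first $\M$ elementary symmetric functions, i.e.\ the characteristic polynomial $\prod_{t_i \in T}(x - t_i)$ has real coefficients; this is the key structural fact.

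Next I would promote this from power sums to arbitrary analytic functions. Because $|T| = \M$, the polynomial $\prod_{t_i \in T}(x-t_i)$ satisfied by the elements of $T$ has degree exactly $\M$, so every element $t$ of $T$ satisfies a degree-$\M$ monic relation with real coefficients. Consequently $t^k$ for any $k \geq \M$ can be rewritten as a real-coefficient polynomial in $t$ of degree $< \M$; averaging over $T$, every higher power sum $\E{}{T^k}$ reduces to a real linear combination of $\E{}{T^0}, \dots, \E{}{T^{\M-1}}$, all of which are real. Thus $\E{}{T^k} \in \R$ for \emph{all} $k \geq 0$. Now if $f$ is analytic on a neighborhood of the support of $T$, write $f$ as a power series (or, to be safe, expand on a common disk containing all finitely many points of the support) $f(z) = \sum_{k \geq 0} a_k z^k$; but since the elements of $T$ need not be real, I would instead expand $f$ about a convenient real point and use that $f$ restricted to the finite set $\mathrm{supp}(T)$ agrees with a polynomial $g$ with real coefficients --- indeed, since $\prod_{t_i \in T}(x-t_i) \in \R[x]$, the non-real elements of $T$ come in conjugate pairs with matching multiplicities, so one can choose an interpolating polynomial $g \in \R[x]$ with $g(t) = f(t)$ for all $t \in \mathrm{supp}(T)$. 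Then $\E{}{f(T)} = \E{}{g(T)}$ is a real-coefficient polynomial evaluated at the real numbers $\E{}{T^k}$, hence real.

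The main obstacle I anticipate is the last step: $T$ itself is not a real multiset, so one cannot naively say ``$f$ of a real number is real.'' The resolution is precisely the observation that $\prod_{t_i \in T}(x-t_i)$ has real coefficients (a consequence of $|T| = |S|$ and $S \subseteq \R$), which forces the non-real points of $T$ into conjugate pairs of equal multiplicity; the averaging $\E{}{\cdot}$ over $T$ is then invariant under complex conjugation, so $\overline{\E{}{f(T)}} = \E{}{\overline{f(\bar T)}} = \E{}{f(T)}$ whenever $f$ commutes with conjugation on the support --- which holds for $f$ with real Taylor coefficients at a real center, and by the interpolation remark for general analytic $f$. I would present the conjugation-symmetry argument as the clean core, with the power-sum computation as the supporting lemma establishing reality of the characteristic polynomial of $T$.
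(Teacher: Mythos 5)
Your proposal is correct and follows essentially the same route as the paper: reality of the first $\M$ moments of $T$ from the defining identity, Newton's identities to conclude that $\prod_{t_i \in T}(x - t_i) \in \R[x]$, and the resulting monic degree-$\M$ relation to propagate reality to all higher moments. The only differences are cosmetic --- you apply the polynomial relation to the elements of $T$ directly where the paper invokes Cayley--Hamilton on a matrix realization, and you spell out the conjugate-pair symmetry needed for the final passage to analytic $f$, a step the paper leaves implicit.
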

\begin{proof}
Since all elements of $S$ are real, the coefficients of 
\[
\prod_{s_i \in S}^{\M} (x - s_i) 
= \sum_{k = 0}^{\M} \binom{\M}{k} x^{\M-k} (-1)^k \E{}{T^k}
\]
are all real.
Now consider the polynomial 
\[
q(x) 
= \prod_{t_i \in T}^{\M} (x - t_i)
= \sum_{i=0}^\M q_i x^{\M-i}.
\]
By the Newton identities, the coefficients $q_i$ are expressable as functions of the first $\M$ moments (which we have just seen are real) and are therefore real.
Now let $A_T$ be a real symmetric matrix with eigenvalues the elements of $T$.
Then the Cayley--Hamilton theorem asserts that (as a matrix equation) $q(A_T) = 0$.
Hence
\[
0 
= \tr{A_T q(A_T)} 
= \sum_i q_i \tr{A_T^{\M-i+1}}
\]
which is an expression for $\tr{A_T^{\M+1}}$ as a linear combination of $\tr{A_T^{k}}$ for $k \leq \M$ with real coefficients (and so itself is real).
By proceeding inductively, we have $\tr{A_T^k} = \E{}{T^k} \in \R$ for all $k$ and so the same will be true for any analytic function.
\end{proof}
Note that if $|T|$ were larger than $|S|$, there could be no such guarantees.
To see this, let $k > 0$.  
We can use the same logic as Lemma~\ref{lem:marcus_transform} to find a multiset $W$ with $|W| = (\M+k)$ which has the same first $\M$ moments as $\mu$, but then has a complex $(\M+1)^{th}$ moment.

Despite our willingness to treat multisets as distributions, one should be careful when going in the reverse direction.
We will say that a distribution $\mu$ is {\em $\M$-realizable} if there exists a multiset $S_\mu$ with $|S_\mu| = \M$ such that the uniform distribution on $S_{\mu}$ gives the same probabilities as $\mu$.
The multiset $S_\mu$ will then be referred to as its {\em $\M$-realization}.
In particular, the \marcus~transform should always be defined in terms of a realization of a distribution and not the distribution itself.
It is easy to see that an $\M$-realizable distribution will also be $k\M$-realizable for any positive integer $k$.
The realizations, however, will be (except in very special cases) quite different, as the next example shows.
\begin{example}
Let $\mu$ be the two-point distribution taking values $\{ -1, 1 \}$ each with probability $1/2$.
Then $S_2 = \{ -1, 1 \}$ is its $2$-realization, which has \marcus~transform $T_2 =\{ -\iu, \iu \}$ which can be seen by computing
\[
\E{}{(x - S_2)^2} = \frac{1}{2} (x^2 -2x\iu - 1) + \frac{1}{2} (x^2 + 2x\iu - 1) = x^2 -1 = (x + 1)(x - 1)
\]
However, the $4$-realization of $\mu$ ($S_4 = \{ 1, 1, -1, -1 \}$) has \marcus~transform
\[
T_4 = \left \{
\sqrt{\frac{2\sqrt{2}-1}{3}},
\sqrt{\frac{2\sqrt{2}-1}{3}},
\iu\sqrt{\frac{2\sqrt{2}+1}{3}},
-\iu\sqrt{\frac{2\sqrt{2}+1}{3}}
\right\}
\]
which can be checked by computing
\[
\E{}{T_4^4} 
= 1
\AND
\E{}{T_4^2} 
= -\frac{1}{3}
\]
so that 
\[
\E{}{(x - T_4)^4} 
= x^4 - 2x^2 + 1 
= (x - 1)^2 (x+1)^2.
\]
\end{example}

The utility of the \marcus~transform will lie in its ability to turn polynomial convolutions (and therefore, as we shall see, finite free independence) into classical independence.
This is illustrated in the following lemma:
\begin{lemma}
\label{lem:poly_conv}
Let $p$ and $q$ be degree $\M$ polynomials with $S$ and $T$ the \marcus~transforms of their roots.
If $S$ and $T$ are independent, then
\[
[ p \sqsum_{\M} q ](x) 
= \E{}{(x - S - T)^{\M}}
\AND
[ p \sqmult_{\M} q ](x) 
= \E{}{(x - ST)^{\M}}
\]
\end{lemma}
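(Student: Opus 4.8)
The plan is to reduce both identities to the explicit linear formulas of Equations~(\ref{eq:add_conv}) and~(\ref{eq:mult_conv}) combined with the defining property of the \marcus~transform. The first step is a dictionary: expanding $p(x) = \E{}{(x - S)^{\M}} = \sum_i x^{\M - i}(-1)^i \binom{\M}{i}\E{}{S^i}$ and comparing with $p(x) = \sum_i x^{\M - i}(-1)^i p_i$ forces $p_i = \binom{\M}{i}\E{}{S^i}$, and similarly $q_j = \binom{\M}{j}\E{}{T^j}$. After this substitution, everything becomes bookkeeping, and the key leverage point is that $S$ and $T$ being independent lets one replace mixed moments by products, $\E{}{S^a T^b} = \E{}{S^a}\E{}{T^b}$.

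For the multiplicative convolution this finishes things almost immediately: the coefficient $p_i q_i / \binom{\M}{i}$ appearing in Equation~(\ref{eq:mult_conv}) equals $\binom{\M}{i}\E{}{S^i}\E{}{T^i} = \binom{\M}{i}\E{}{(ST)^i}$ by independence, so $[p \sqmult_{\M} q](x) = \sum_i x^{\M - i}(-1)^i \binom{\M}{i}\E{}{(ST)^i} = \E{}{(x - ST)^{\M}}$, which is the claim.

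For the additive convolution the only real computation is checking that the factorial weight collapses correctly. Substituting the dictionary into Equation~(\ref{eq:add_conv}), the coefficient attached to $x^{\M - i - j}(-1)^{i+j}\E{}{S^i}\E{}{T^j}$ is
\[
\frac{(\M-i)!\,(\M-j)!}{(\M-i-j)!\,\M!}\binom{\M}{i}\binom{\M}{j}
= \frac{\M!}{(\M-i-j)!\,i!\,j!}
= \binom{\M}{i+j}\binom{i+j}{i}.
\]
Fixing $k = i + j$ and summing over $i$, independence together with the binomial theorem gives $\sum_{i=0}^{k}\binom{k}{i}\E{}{S^i}\E{}{T^{k-i}} = \E{}{(S+T)^k}$, hence $[p \sqsum_{\M} q](x) = \sum_k x^{\M - k}(-1)^k\binom{\M}{k}\E{}{(S+T)^k} = \E{}{(x - S - T)^{\M}}$, as desired.

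The argument is entirely algebraic — there are no convergence concerns, since both sides are polynomials in $x$ whose coefficients are finite mixed moments of $S$ and $T$ — so the only obstacle is the routine one of getting the factorials in the additive coefficient to telescope into $\binom{\M}{i+j}\binom{i+j}{i}$. The one genuinely conceptual input is that matching the first $\M$ moments suffices to identify two monic degree-$\M$ polynomials, which is exactly Lemma~\ref{lem:marcus_transform}; note also that realness of $S$ and $T$ (Lemma~\ref{lem:real}) plays no role in the identity itself, only independence does, and only through the factorization of mixed moments.
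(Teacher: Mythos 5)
Your proof is correct and follows essentially the same route as the paper: the same dictionary $p_i = \binom{\M}{i}\E{}{S^i}$, the same use of independence to factor mixed moments, and the same collapse of the factorial weight into the multinomial coefficient $\M!/\bigl((\M-i-j)!\,i!\,j!\bigr)$ (the paper writes this as $\binom{\M}{i,j}$ and applies the multinomial theorem in one step, where you group by $k=i+j$ and apply the binomial theorem twice, which is only a cosmetic difference). Nothing is missing.
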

\begin{proof}
Let $p(x) = \sum_i x^{d-i} (-1)^i p_i$.
As observed previously, we have 
\[
p(x) = \E{}{(x - S)^{\M}} = \sum_i x^{\M-i} \binom{\M}{i} \E{}{(-S)^i}
\]
and so $p_i = \binom{\M}{i}  \E{}{S^i}$ (and similarly for $q$ and $T$).
By Equation~(\ref{eq:add_conv}), we have
\begin{align*}
[ p \sqsum_{\M} q ](x)
&=\sum_{i=0}^{\M} \sum_{j=0}^{\M-i} x^{\M-i-j} (-1)^{i + j} \frac{(\M-i)!(\M-j)!}{\M!(\M-i-j)!}p_i q_j
\\&= \sum_{i=0}^{\M} \sum_{j=0}^{\M-i} x^{\M-i-j} (-1)^{i + j} \frac{(\M-i)!(\M-j)!}{\M!(\M-i-j)!} \binom{\M}{i}\binom{\M}{j} \E{}{S^i}\E{}{T^j}
\\&= \sum_{i=0}^{\M} \sum_{j=0}^{\M-i} x^{\M-i-j} (-1)^{i + j} \binom{\M}{i, j} \E{}{S^iT^j} && \text{(independence)}
\\&= \E{}{(x - S - T)^{\M}}.
\intertext{The multiplicative case is similar, using Equation~(\ref{eq:mult_conv}):}
[ p \sqmult_{\M} q ](x)
&=\sum_{i=0}^{\M} x^{\M-i}(-1)^i \frac{p_iq_i}{\binom{\M}{i}}
\\&= \sum_{i=0}^{\M} x^{\M-i} (-1)^i \binom{\M}{i}\binom{\M}{i}\frac{\E{}{S^i}\E{}{T^i}}{\binom{\M}{i}} 
\\&= \sum_{i=0}^{\M} x^{\M-i} (-1)^i \binom{\M}{i}\E{}{S^iT^i} && \text{(independence)}
\\&= \E{}{(x - ST)^{\M}}.
\end{align*}
as required.
\end{proof}
An easy corollary of Lemma~\ref{lem:poly_conv} is scale and translation invariance of the \marcus~transform.
\begin{corollary}
Let $T$ be the \marcus~transform of a multiset $S$.
Then the \marcus~transforms of 
\[
\{ s + k : s \in S \}
\AND
\{ ks : s \in S \}
\]
are
\[
\{ t + k : t \in T \}
\AND
\{ kt : t \in T \}
\]
respectively.
\end{corollary}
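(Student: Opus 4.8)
The plan is to deduce both statements directly from Lemma~\ref{lem:poly_conv}, using the fact that the symmetric additive convolution with the polynomial $(x-k)^{\M}$ implements a shift, and the symmetric multiplicative convolution with $(x-k)^{\M}$ implements a scaling. Concretely, for the translation claim, first I would observe that the multiset $\{k\}$ repeated $\M$ times has \marcus~transform equal to itself: the polynomial $\prod (x-k) = (x-k)^{\M}$ is already of the form $\E{}{(x-T)^{\M}}$ with $T$ the constant multiset $\{k,\dots,k\}$, so by the uniqueness in Lemma~\ref{lem:marcus_transform} this constant multiset is its own \marcus~transform. Next I would note that a constant random variable is independent of everything, so Lemma~\ref{lem:poly_conv} applies with $q(x) = (x-k)^{\M}$ and the transform of its roots being the constant $k$. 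This gives
\[
\bigl[\, p \sqsum_{\M} (x-k)^{\M} \,\bigr](x) = \E{}{(x - S - k)^{\M}},
\]
where $S$ is the \marcus~transform of the roots of $p$. But the left-hand side, by the definition of $\sqsum_{\M}$ as $\E{Q}{\mydet{xI - A - QBQ^T}}$ with $B = kI$, is simply $p(x-k) = \mydet{(x-k)I - A}$, which equals $\prod_{s_i \in S}\bigl((x-k) - s_i\bigr)$ after rewriting via the \marcus~transform identity. Comparing the two expressions, $\prod_i \bigl((x-k) - s_i\bigr) = \E{}{(x - (S+k))^{\M}}$, and by uniqueness of the \marcus~transform this exhibits $\{s+k : s \in S\}$ as the \marcus~transform of $\{\text{roots of }p(x-k)\} = \{r+k\}$.

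For the scaling claim I would run the analogous argument with the multiplicative convolution: the constant multiset $\{k,\dots,k\}$ is again its own \marcus~transform, it is independent of $S$, and Lemma~\ref{lem:poly_conv} gives $\bigl[\, p \sqmult_{\M} (x-k)^{\M}\,\bigr](x) = \E{}{(x - kS)^{\M}}$. On the other hand the left side is $\E{Q}{\mydet{xI - A(kI)Q(kI)^{-1}\cdots}}$—more directly, $[p \sqmult_{\M}(x-k)^{\M}](x) = \E{Q}{\mydet{xI - kAQIQ^T}} = \mydet{xI - kA} = \prod_i (x - k r_i)$ where $r_i$ are the roots of $p$. Equating and invoking uniqueness identifies $\{ks : s \in S\}$ as the \marcus~transform of $\{kr_i\}$. (A cosmetic point: for $k \le 0$ the matrix $kI$ is not positive definite, so the multiplicative convolution isn't literally defined there; but the linear formula in Equation~(\ref{eq:mult_conv}) and hence Lemma~\ref{lem:poly_conv} are purely algebraic in the coefficients, so one can either restrict to $k>0$ and then extend by the polynomial identity in $k$, or simply note that all identities are between polynomials whose coefficients are polynomials in $k$.)

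The only genuine obstacle is the bookkeeping around whether the constant multiset is truly fixed by the \marcus~transform and whether Lemma~\ref{lem:poly_conv}'s independence hypothesis is legitimately met by a deterministic multiset — both of which are immediate once stated carefully, since a deterministic random variable is independent of every other random variable and $\E{}{(x-T)^{\M}} = (x-k)^{\M}$ when $T \equiv k$. A cleaner alternative, which avoids invoking the convolutions at all, is to argue coefficient-wise: the \marcus~transform is characterized by matching the first $\M$ moments, i.e.\ $\E{}{T^j} = p_j / \binom{\M}{j}$ in the notation of Lemma~\ref{lem:poly_conv}; one then checks that replacing $S$ by $S+k$ shifts these moments exactly as the moments of $T+k$ do (binomial expansion), and similarly $kS$ scales them as $kT$ does, so uniqueness in Lemma~\ref{lem:marcus_transform} finishes it. I expect to present the short convolution-based proof, since the corollary is explicitly billed as following from Lemma~\ref{lem:poly_conv}. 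I would write:

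\begin{proof}
The constant multiset $\{k, \dots, k\}$ (with $\M$ entries) is its own \marcus~transform, since $\prod(x - k) = (x-k)^{\M} = \E{}{(x - k)^{\M}}$; it is also independent of $S$, being deterministic. Applying Lemma~\ref{lem:poly_conv} with $q(x) = (x-k)^{\M}$ gives
\[
\bigl[\, p \sqsum_{\M} q \,\bigr](x) = \E{}{(x - S - k)^{\M}}
\AND
\bigl[\, p \sqmult_{\M} q \,\bigr](x) = \E{}{(x - kS)^{\M}}.
\]
On the other hand, taking $B = kI$ in the definitions, $[p \sqsum_{\M} q](x) = \mydet{xI - A - kI} = p(x-k)$ and $[p \sqmult_{\M} q](x) = \mydet{xI - kA}$, whose roots are $\{r + k\}$ and $\{kr\}$ respectively for $r$ ranging over the roots of $p$. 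Comparing the two sets of expressions and using the uniqueness in Lemma~\ref{lem:marcus_transform} identifies $\{s+k : s \in S\}$ and $\{ks : s \in S\}$ as the \marcus~transforms of $\{r+k\}$ and $\{kr\}$.
\end{proof}
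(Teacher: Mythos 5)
Your proposal is correct and follows essentially the same route as the paper, whose entire proof is to apply the convolution machinery with $q(x) = (x-k)^{\M}$ and invoke the uniqueness in Lemma~\ref{lem:marcus_transform}. You have simply filled in the details (the constant multiset is its own \marcus~transform, deterministic variables are independent of everything, and the convolutions with $kI$ reduce to a shift and a scaling), all of which check out.
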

\begin{proof}
Use Lemma~\ref{lem:marcus_transform} with $q = (x - k)^{\M}$.
\end{proof}

\section{Free probability as the limit of polynomial convolutions}\label{sec:convolutions}

\renewcommand{\deriv}[1]{ \frac{\partial}{\partial #1}}

The goal of this section is to show the relationship between the polynomials convolutions defined in Section~\ref{sec:formulas} and free probability.
Our approach will be to introduce sequences of transforms (indexed by positive integers) which converge to Voiculescu's R-transform and S-transform when applied to the spectral distributions of Hermitian operators.
This will define a sequence of convolutions which will converge to the free additive and free multiplicative convolution from free probability, but in the finite case will reduce to the convolutions of polynomials.

Before getting into any details, it is worth considering what to expect out of finite versions of (any) transforms.
Recall that the spectral distribution of an $\M \times \M$ matrix is completely determined by the first $\M$ moments.
As a result, the same is true for any function of these moments, including the transforms that we will define.
It will nonetheless be computationally beneficial to be able to consider generic power series, which in our context, will simply have extraneous information (that we can choose to ignore if and when it is useful to us).

Interestingly, we will find that the additive and multiplicative convolutions 
will ``store'' their moments in the coefficients of different polynomial bases.
The additive case will use the standard polynomial basis and so we will be 
interested in certain coefficients of our power series.
The multiplicative convolution, on the other hand, will use ``rising 
factorial'' bases, which in turn will cause our interest to lie in the 
evaluations of our power series at certain points (see Remark~\ref{rem:bases}).
The additive case, in particular, will require some operations that do not 
obviously preserve such information (as this is not true for general 
operations).
Hence we will need to prove that the operations that we will encounter can be 
performed without destroying information, which we do now.

Let $f(x) = \sum_i a_i x^i$ be a formal power series.
For an integers $k$, we will write $f \mymod{x}{k}$ to denote the polynomial 
\[
\sum_{i = 0}^{k-1} a_i x^i
\]
and we will write 
\[
f \equiv g \mymod{x}{k}
\]
if $f - g \mymod{x}{k}$ is the $0$ polynomial.
Many properties can be derived easily from the definition.
For example, if
\[
a(x) \equiv b(x) \mymod{x}{k} 
\AND
c(x) \equiv d(x) \mymod{x}{k}
\]
then we have 
\[
a(x)c(x) \equiv b(x)d(x) \mymod{x}{k}
\AND
a(x) + c(x) \equiv b(x) + d(x) \mymod{x}{k}
\]
Combining these implies (under the same assumptions), that
\[
h(a(x)) \equiv h(b(x)) \mymod{x}{k}
\]
for any power series $h$.
This leads to the following observation, which we state as a corollary:
\begin{corollary}
\label{cor:mod_comp}
Let $a, b, h$ be power series with $h$ invertible (that is, there exists power series $g$ such that $h(g(x)) = x$ for all $x$.
Then 
\[
a(x) \equiv b(x) \mymod{x}{k} \iff h(a(x)) \equiv h(b(x)) \mymod{x}{k}  
\]
\end{corollary}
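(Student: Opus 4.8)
The plan is to deduce both directions of the equivalence from the single implication already recorded in the paragraph just before the corollary, namely that $a(x) \equiv b(x) \mymod{x}{k}$ implies $r(a(x)) \equiv r(b(x)) \mymod{x}{k}$ for every power series $r$. With that in hand, the forward direction of the corollary is immediate: taking $r = h$ gives $h(a(x)) \equiv h(b(x)) \mymod{x}{k}$.

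For the reverse direction I would use invertibility of $h$. A power series that admits a compositional inverse admits a two-sided one, so the series $g$ with $h(g(x)) = x$ also satisfies $g(h(x)) = x$. Assume now $h(a(x)) \equiv h(b(x)) \mymod{x}{k}$. Applying the background implication with $r = g$ to the pair $h(a(x)), h(b(x))$ gives $g(h(a(x))) \equiv g(h(b(x))) \mymod{x}{k}$, and since $g \circ h$ is the identity this says exactly $a(x) \equiv b(x) \mymod{x}{k}$.

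The only step that requires genuine care — and the one I expect to be the main obstacle if one wants a fully rigorous account — is the background implication $r(a(x)) \equiv r(b(x)) \mymod{x}{k}$ itself. I would prove it by writing $r(y) = \sum_n r_n y^n$, using the already-noted fact that $\equiv \mymod{x}{k}$ is preserved under products to get $a(x)^n \equiv b(x)^n \mymod{x}{k}$ for all $n$ by induction, and then observing that for $j < k$ the coefficient of $x^j$ in $r(a(x))$ is a fixed finite polynomial expression in the $r_n$ and the low-order coefficients of $a$; hence it equals the corresponding coefficient of $r(b(x))$, and the congruence survives the formal sum defining $r$. This is also the point at which one implicitly needs the relevant compositions to be well-defined as formal power series (or the series to converge), which is the case in all the situations encountered in Section~\ref{sec:convolutions}.
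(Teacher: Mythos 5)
Your proposal is correct and is exactly the argument the paper intends: the forward direction is the composition-preserves-congruence observation stated just before the corollary, and the reverse direction applies that same observation with the compositional inverse $g$ and uses $g(h(x)) = x$. The paper leaves this implicit (stating the result "as a corollary" of the preceding discussion), so your more careful treatment of the background implication and of well-definedness of the compositions is a reasonable elaboration rather than a departure.
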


\subsection{Additive convolution}

We begin with the additive case.
\begin{definition}
Let $A$ be a Hermitian operator with with compactly supported spectral distribution $\mu_A$.
For an integer $\M$, we define the power series
\begin{equation}
\label{eq:myK}
\myinvcauchy{\M}{\mu_A}{s} = -\deriv{s} \ln \mynorm{ e^{-xs} \fkdet{xI - A}}{\M}
\end{equation}
where the domain of integration is $(\rho_A, \infty)$.
We call $\myinvcauchy{\M}{\mu_A}{s}$ the {\em $\M$-finite K-transform of $\mu_A$}.
\end{definition}
It should be clear from the definition that $\myinvcauchy{\M}{\mu_A}{s}$ is 
invariant under unitary transformations of $A$.
The $\M$-finite K-transform will be the analogue of the inverse Cauchy transform from Voiculescu's theory.
We then define the {\em $\M$-finite R-transform} by
\begin{equation}
\label{eq:myR}
\myrtrans{\M}{\mu_A}{s} = \myinvcauchy{\M}{\mu_A}{s} - \myinvcauchy{\M}{\mu_0}{s}
\end{equation}
where $\mu_0$ is the constant $0$ distribution.
It is not hard to calculate (and we will do most of it in Lemma~\ref{lem:add_compute}) that 
\[
\myinvcauchy{\M}{\mu_0}{s} = \left( 1 + \frac{1}{\M} \right)\frac{1}{s}
\]
which one can view as the discrete version of the familiar $1/s$ term that is subtracted from the inverse Cauchy transform to get the R-transform in Voiculescu's theory. 

\subsubsection{Relation to polynomial convolutions}\label{sec:add_conv}

We begin by showing the connection between finite R-transforms and finite free additive convolutions of polynomials.
Our first job will be to find the value of the $\M$-finite R-transform of a distribution on an $\M \times \M$ matrix.
The computation will employ the Laplace transform from Section~\ref{sec:Laplace} as well as the \marcus~transform introduced in Section~\ref{sec:marcus_transform}.

\begin{lemma}
\label{lem:add_compute}
If $A$ is an $\M \times \M$ Hermitian matrix, then 
\[
\frac{\mynorm{ e^{-xs} \fkdet{xI - A} }{\M}^{\M}}{\mynorm{ e^{-xs} \fkdet{xI - 0} }{\M}^{\M}}
\equiv
\E{}{e^{-{\M}sT_A}} \mymod{s}{\M+1}
\]
where $T_A$ is the \marcus~transform of $\eigen{A}$.
\end{lemma}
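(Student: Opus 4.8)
The plan is to strip the three layers of notation on the left-hand side down to a single one-dimensional integral, use the \marcus~transform (Lemma~\ref{lem:marcus_transform}) to replace $\mydet{xI-A}$ by the expectation of a pure power, and then evaluate that integral explicitly, checking that the only dependence on the finite lower limit $\rho_A$ sits in degrees $>\M$ and so disappears modulo $s^{\M+1}$.

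First I would unwind the norms. By Equation~(\ref{eq:fkdet}), $\fkdet{xI-A}$ equals $\mydet{xI-A}^{1/\M}$ for $x>\rho_A$ and vanishes otherwise, so
\[
\mynorm{ e^{-xs}\fkdet{xI-A} }{\M}^{\M} = \int_{\rho_A}^{\infty} e^{-\M x s}\mydet{xI-A}\d{x},
\]
and the denominator is the case $A=0$, which by Equation~(\ref{eq:laplace_k}) (applied at $\M s$) is exactly $\M!/(\M s)^{\M+1}$. By Lemma~\ref{lem:marcus_transform}, $\mydet{xI-A}=\E{}{(x-T_A)^{\M}}$ is a finite sum, so I may pull the expectation outside the integral, and it suffices to evaluate $I_t(s):=\int_{\rho_A}^{\infty} e^{-\M x s}(x-t)^{\M}\d{x}$ for each value $t$ of $T_A$; note that $t$ may be complex, which is why I expand about the real endpoint $\rho_A$ rather than trying to substitute $t$ for it.

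The core of the argument is then a direct computation: expanding $(x-t)^{\M}=\sum_{j=0}^{\M}\binom{\M}{j}(x-\rho_A)^{j}(\rho_A-t)^{\M-j}$ and integrating term by term via the substitution $u=x-\rho_A$ and Equation~(\ref{eq:laplace_k}) gives
\[
I_t(s)=\frac{\M!}{(\M s)^{\M+1}}\; e^{-\M\rho_A s}\sum_{m=0}^{\M}\frac{\big(\M s(\rho_A-t)\big)^{m}}{m!}.
\]
The bracketed factor is an honest power series in $s$; truncating the exponential series $\sum_{m\ge 0}(\M s(\rho_A-t))^{m}/m!$ at $m=\M$ removes only monomials of degree $>\M$, so that factor is $\equiv e^{\M s(\rho_A-t)}\mymod{s}{\M+1}$, and multiplying by the full power series $e^{-\M\rho_A s}$ (which has constant term $1$) collapses it to $\equiv e^{-\M s t}\mymod{s}{\M+1}$, using the elementary congruence arithmetic recorded just before Corollary~\ref{cor:mod_comp}. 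Averaging over $t$ and dividing by the denominator $\M!/(\M s)^{\M+1}$ — which cancels the identical prefactor of $I_t(s)$ exactly — yields the stated congruence.

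The one step that needs care, and the step for which the informal remark in Section~\ref{sec:Laplace} is really a placeholder, is the bookkeeping around the pole at $s=0$: $I_t(s)$ is a Laurent series with a pole of order $\M+1$, and one has to be confident that the ``boundary'' discrepancy from the finite lower limit $\rho_A$ (the gap between the truncated and the full exponential series, amplified by $(\M s)^{-(\M+1)}$) genuinely lands in degrees $\ge\M+1$. Once one notes that this pole cancels exactly against the (likewise order-$\M+1$) pole of the denominator, so that the quotient is a bona fide power series, the verification is immediate; but it is precisely what makes this a statement about truncations modulo $s^{\M+1}$ rather than an exact identity.
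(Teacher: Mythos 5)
Your proposal is correct and follows essentially the same route as the paper: reduce the norms to a Laplace transform over $(\rho_A,\infty)$, shift to the endpoint $\rho_A$, expand binomially to get $\frac{\M!}{(\M s)^{\M+1}}e^{-\M\rho_A s}$ times the degree-$\M$ truncation of $e^{\M s(\rho_A-T_A)}$, and observe that the truncation error and the prefactor cancellation land everything in the right congruence class modulo $s^{\M+1}$. Your closing remark about the order-$(\M+1)$ pole cancelling against the denominator is a nice explicit articulation of what the paper leaves implicit.
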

\begin{proof}
For an $\M \times \M$ matrix $A$, we have the simplification
\[
\fkdet{xI - A}^{\M} = \mydet{xI - A} = \E{}{(x - T_A)^{\M}}.
\]
where $T_A$ is the \marcus~transform of $\eigen{A}$.
Hence
\begin{align*}
\mynorm{ e^{-xs} \fkdet{xI - A} }{\M}^{\M}
&= \int_{\rho_A}^{\infty} e^{-{\M}xs} \E{}{(x - T_A)^{\M}} \d{x} && (y := x - \rho_A)
\\&= e^{-{\M}\rho_A s}\int_{0}^{\infty} e^{-{\M}ys} \E{}{(y+\rho_A - T_A)^{\M}} \d{x}
\\&= \E{}{e^{-{\M}\rho_A s}\L{(x + \rho_A - T_A)^{\M}}({\M}s)}
\end{align*}
where
\begin{align*}
\L{(x + \rho_A - T_A)^{\M}}({\M}s)
&= \sum_{i=0}^{\M} \binom{\M}{i}(\rho_A-T_A)^{\M-i} \L{x^i}({\M}s)
\\&= \sum_{i=0}^{\M} \binom{\M}{i}(\rho_A-T_A)^{\M-i} \frac{i!}{({\M}s)^{i+1}}
\\&= \M! \sum_{i=0}^{\M} (\rho_A-T_A)^{\M-i} \frac{({\M}s)^{-i-1}}{(\M-i)!}.
\end{align*}
When $A$ is the $0$ matrix, we have $\rho_A = T_A = 0$, so 
\[
\mynorm{ e^{-xs} \fkdet{xI - A} }{\M}^{\M} = \frac{\M!}{(\M s)^{\M+1}}.
\]
Hence we can write
\begin{align*}
\frac{\mynorm{ e^{-xs} \fkdet{xI - A} }{\M}^{\M}}{\mynorm{ e^{-xs} \fkdet{xI - 0} }{\M}^{\M}}
&= \frac{(\M s)^{\M+1}}{\M!} \E{}{e^{-{\M}s \rho_A}\L{(x + \rho_A - 
T_A)^{\M}}({\M}s)}
\\&= e^{-{\M}s \rho_A} \E{}{\sum_{i=0}^{\M} (\rho_A-T_A)^{\M-i} 
\frac{({\M}s)^{\M-i}}{(\M-i)!}}.
\end{align*}
where
\[
\sum_{i=0}^{\M} (\rho_A-T_A)^{\M-i} \frac{({\M}s)^{\M-i}}{(\M-i)!} \equiv \E{}{e^{(\rho_A - T_A)\M s}} \mymod{s}{\M+1}
\]
Hence
\begin{align*}
\frac{\mynorm{ e^{-xs} \fkdet{xI - A} }{\M}^{\M}}{\mynorm{ e^{-xs} \fkdet{xI - 0} }{\M}^{\M}}
&\equiv e^{-{\M}\rho_A s} \E{}{e^{(\rho_A - T_A)\M s}} \mymod{s}{\M+1}
\\&\equiv \E{}{e^{-\M s T_A}} \mymod{s}{\M+1} 
\end{align*}
as claimed.
\end{proof}
This gives us a direct formula for the $\M$-finite R-transform:
\begin{corollary}
\label{cor:rtrans}
If $A$ is a $\M \times \M$ Hermitian matrix, then 
\[
\myrtrans{\M}{\mu_A}{s} \equiv -\frac{1}{\M}\deriv{s} \ln \E{}{e^{-{\M}sT_A}} 
\mymod{s}{\M}
\]
where $T_A$ is the \marcus~transform of $\eigen{A}$.
\end{corollary}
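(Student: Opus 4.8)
The plan is to unwind the definition of $\myrtrans{\M}{\mu_A}{s}$ and reduce the whole statement to Lemma~\ref{lem:add_compute}. First I would merge the two logarithms appearing in $\myrtrans{\M}{\mu_A}{s} = \myinvcauchy{\M}{\mu_A}{s} - \myinvcauchy{\M}{\mu_0}{s}$ into one: since $\deriv{s}$ is linear and the logarithm turns a quotient into a difference,
\[
\myrtrans{\M}{\mu_A}{s}
= -\deriv{s}\ln \frac{\mynorm{e^{-xs}\fkdet{xI-A}}{\M}}{\mynorm{e^{-xs}\fkdet{xI-0}}{\M}}
= -\frac{1}{\M}\deriv{s}\ln \frac{\mynorm{e^{-xs}\fkdet{xI-A}}{\M}^{\M}}{\mynorm{e^{-xs}\fkdet{xI-0}}{\M}^{\M}},
\]
where the second equality just pulls the exponent $1/\M$ out of each $L^{\M}$ norm. (Here the matrix $0$ is the zero matrix, whose spectral distribution is $\mu_0$.)

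Next I would apply Lemma~\ref{lem:add_compute}, which states exactly that the ratio inside the last logarithm is congruent to $\E{}{e^{-\M s T_A}}$ modulo $s^{\M+1}$, with $T_A$ the \marcus~transform of $\eigen{A}$. Both sides of that congruence are power series in $s$ with constant term $1$: for $\E{}{e^{-\M s T_A}}$ this is immediate, and for the ratio it follows from the explicit formula obtained in the proof of Lemma~\ref{lem:add_compute}, where the ratio is exhibited as $e^{-\M s\rho_A}$ times a polynomial in $s$ whose constant term is $1$. Hence the logarithm of each side is a genuine power series, obtained by substituting a series with zero constant term into $h(t) = \ln(1+t)$, and the observations preceding Corollary~\ref{cor:mod_comp} (applying a power series to both sides of a congruence mod $s^{\M+1}$) give
\[
\ln \frac{\mynorm{e^{-xs}\fkdet{xI-A}}{\M}^{\M}}{\mynorm{e^{-xs}\fkdet{xI-0}}{\M}^{\M}}
\equiv \ln \E{}{e^{-\M s T_A}} \mymod{s}{\M+1}.
\]

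Finally I would differentiate: if two power series agree through degree $\M$, their derivatives agree through degree $\M-1$, so applying $\deriv{s}$ turns this congruence mod $s^{\M+1}$ into a congruence mod $s^{\M}$. Multiplying by $-1/\M$ and substituting the first display for the left-hand side then yields the claimed congruence. The argument is essentially bookkeeping: all of the analytic content — converting the $L^{\M}$ norms into the expectation of an exponential of the \marcus~transform, and ensuring the relevant objects really are power series near $s=0$ — is already contained in Lemma~\ref{lem:add_compute}. The only points that need a word of justification are that $\ln$ and $\deriv{s}$ interact correctly with the relation $\mymod{s}{\cdot}$, and both are covered by the elementary facts recorded just before Corollary~\ref{cor:mod_comp}; that is the one spot where a careless argument could go wrong, so it is where I would be most explicit.
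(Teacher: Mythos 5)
Your proposal is correct and follows essentially the same route as the paper: merge the two logarithms into a single ratio, invoke Lemma~\ref{lem:add_compute}, pass the congruence through the logarithm, and differentiate (losing one order in the modulus). Your justification for applying $\ln$ to the congruence --- noting both sides have constant term $1$ and composing with $h(t)=\ln(1+t)$ --- is in fact slightly more careful than the paper's bare appeal to Corollary~\ref{cor:mod_comp}, but it is the same argument in substance.
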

\begin{proof}
We start by unpacking Equation~(\ref{eq:myR}):
\begin{align*}
\myrtrans{\M}{\mu_A}{s} 
&= \myinvcauchy{\M}{\mu_A}{s} - \myinvcauchy{\M}{\mu_0}{s}
\\&= -\deriv{s} \ln \mynorm{ e^{-xs} \fkdet{xI - A}}{\M} + \deriv{s} \ln \mynorm{ e^{-xs} \fkdet{xI - 0}}{\M}
\\&= -\frac{1}{\M}\deriv{s} \ln \left( \frac{\mynorm{ e^{-xs} \fkdet{xI - A}}{\M}^{\M}}{\mynorm{ e^{-xs} \fkdet{xI - 0} }{\M}^{\M}} \right).
\end{align*}
By Lemma~\ref{lem:add_compute}, we have 
\[
\frac{\mynorm{ e^{-xs} \fkdet{xI - A} }{\M}^{\M}}{\mynorm{ e^{-xs} \fkdet{xI - 0} }{\M}^{\M}}
\equiv
\E{}{e^{-{\M}sT_A}} \mymod{s}{\M+1}
\]
where $T_A$ is the \marcus~transform of $\eigen{A}$.
Since $\ln$ is invertible, we can apply Corollary~\ref{cor:mod_comp} to get
\[
\ln \left(\frac{({\M}s)^{\M+1}}{\M!} \mynorm{ e^{-xs} \fkdet{xI - A}}{\M}^{\M} \right) \equiv \ln \E{}{(s - T_A)^{\M}} \mymod{s}{\M+1}.
\]
This implies the claim, since having two power series match on the first $\M+1$ coefficients implies their derivatives match on the first $\M$ coefficients. 
\end{proof}

\begin{lemma}
\label{lem:sum_reduction}
Let $A$ and $B$ be $\M \times \M$ Hermitian matrices.
Then the following are equivalent:
\begin{enumerate}
\item $\myrtrans{\M}{\mu_A}{s} + \myrtrans{\M}{\mu_B}{s} \equiv \myrtrans{\M}{\mu_{A + B}}{s} \mymod{s}{\M}$
\item $\mydet{xI - A} \sqsum_{\M} \mydet{xI - B} = \mydet{xI - A - B}$
\end{enumerate}
\end{lemma}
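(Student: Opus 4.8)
The plan is to show that statements (1) and (2) are each equivalent to one and the same congruence between exponential generating functions of Marcus transforms, using Corollary~\ref{cor:rtrans} to handle (1) and Lemma~\ref{lem:poly_conv} to handle (2). Throughout, write $T_A$, $T_B$, $T_{A+B}$ for the Marcus transforms of $\eigen{A}$, $\eigen{B}$, $\eigen{A+B}$, and realize $T_A$ and $T_B$ on a common product probability space so that they are independent (this is the hypothesis under which Lemma~\ref{lem:poly_conv} applies).

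First I would translate (1). By Corollary~\ref{cor:rtrans} each $\M$-finite R-transform equals $-\frac{1}{\M}\deriv{s}\ln\E{}{e^{-\M s T}}$ modulo $s^{\M}$, so (1) is equivalent to
\[
\deriv{s}\ln\!\left(\E{}{e^{-\M s T_A}}\,\E{}{e^{-\M s T_B}}\right) \equiv \deriv{s}\ln\E{}{e^{-\M s T_{A+B}}} \mymod{s}{\M}.
\]
The two power series $\ln\!\left(\E{}{e^{-\M s T_A}}\E{}{e^{-\M s T_B}}\right)$ and $\ln\E{}{e^{-\M s T_{A+B}}}$ both vanish at $s=0$ (the arguments of the logarithms have constant term $1$), so their derivatives agree modulo $s^{\M}$ if and only if the series themselves agree modulo $s^{\M+1}$. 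Since $\ln$ is invertible on power series with constant term $1$, Corollary~\ref{cor:mod_comp} lets me strip the logarithms, and independence of $T_A,T_B$ collapses the product of expectations into one expectation; thus (1) is equivalent to
\[
\E{}{e^{-\M s (T_A + T_B)}} \equiv \E{}{e^{-\M s T_{A+B}}} \mymod{s}{\M+1}.
\]

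Next I would note that, modulo $s^{\M+1}$, the series $\E{}{e^{-\M s Z}}$ records exactly the moments $\E{}{Z^k}$ for $0\le k\le \M$ (the coefficient of $s^{k}$ being $(-\M)^{k}\E{}{Z^k}/k!$), and these are precisely the coefficients of the monic degree-$\M$ polynomial $\E{}{(x-Z)^{\M}} = \sum_{k}\binom{\M}{k}x^{\M-k}(-1)^{k}\E{}{Z^k}$. Hence the displayed congruence is equivalent to the polynomial identity $\E{}{(x-T_A-T_B)^{\M}} = \E{}{(x-T_{A+B})^{\M}}$. By Lemma~\ref{lem:poly_conv} the left-hand side equals $[\mydet{xI-A}\sqsum_{\M}\mydet{xI-B}](x)$, and by the defining property of the Marcus transform (Lemma~\ref{lem:marcus_transform} applied to the matrix $A+B$) the right-hand side equals $\mydet{xI-A-B}$. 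This is exactly statement (2), and since every step above is a genuine equivalence, the chain closes.

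The whole argument is bookkeeping once Corollary~\ref{cor:rtrans} and Lemma~\ref{lem:poly_conv} are available; the only points that genuinely need care are the passage between ``derivatives agree modulo $s^{\M}$'' and ``series agree modulo $s^{\M+1}$'' (valid because both logarithmic series have zero constant term) and the observation that truncating $\E{}{e^{-\M s Z}}$ at order $s^{\M+1}$ retains exactly the data of the degree-$\M$ polynomial $\E{}{(x-Z)^{\M}}$. Note that the reality results of Lemma~\ref{lem:real} are not needed here, since every identity in play is a purely algebraic identity of power series and polynomials over $\C$.
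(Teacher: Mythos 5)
Your proposal is correct and follows essentially the same route as the paper's proof: both reduce each statement to the single congruence $\E{}{e^{-\M s(T_A+T_B)}} \equiv \E{}{e^{-\M s T_{A+B}}} \mymod{s}{\M+1}$ via Corollary~\ref{cor:rtrans}, Corollary~\ref{cor:mod_comp}, independence of the Marcus transforms, and Lemma~\ref{lem:poly_conv}, with the same constant-term observation ($f_X(0)=0$) handling the passage between the derivative congruence mod $s^{\M}$ and the series congruence mod $s^{\M+1}$. The only difference is that you traverse the chain of equivalences starting from statement (1) rather than statement (2), which is immaterial.
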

\begin{proof}
Let $T_A, T_B, T_{A+B}$ be the \marcus~transforms of $\eigen{A}, \eigen{B}, \eigen{A+B}$ respectively and where $T_A$ and $T_B$ are treated as independent random variables.
By Lemma~\ref{lem:poly_conv}, we have that {\em 2.} holds if and only if
\[
\E{}{(x - T_A - T_B)^{\M}}  = \E{}{(x-T_{A+B})^{\M}}
\]
which holds if and only if $T_A + T_B$ and $T_{A + B}$ have the same first $\M$ moments.
This in turn is equivalent to the statement
\[
\E{}{e^{-{\M}s(T_A + T_B)}} \equiv \E{}{e^{-{\M}sT_{A+B}}} \mymod{s}{\M+1}
\]
which is true if and only if 
\[
\E{}{e^{-{\M}s(T_A)}} \E{}{e^{-{\M}s(T_B)}} \equiv \E{}{e^{-{\M}sT_{A+B}}} \mymod{s}{\M+1}
\]
since $T_A$ and $T_B$ were chosen to be independent.
Since $\ln$ is invertible, we can apply to Corollary~\ref{cor:mod_comp} to see that this is equivalent to 
the statement
\[
f_A(s) + f_B(s) \equiv f_{A+B}(s) \mymod{s}{\M+1}
\]
where $f_X(s) = -\frac{1}{\M} \ln \E{}{e^{-{\M}s(T_B)}}$.
So by Corollary~\ref{cor:rtrans}, it remains to show that the two statements
\begin{enumerate}
\item $f_A(s) + f_B(s) = f_{A+B}(s) \mymod{s}{\M+1}$
\item $\deriv{s} f_A(s) + \deriv{s} f_B(s) \equiv \deriv{s} f_{A+B}(s) \mymod{s}{\M}$
\end{enumerate}
are equivalent.
The forward implication is obvious (and was done in the proof of Corollary~\ref{cor:rtrans}).
The only issue with the reverse implication, however, is the constant term.
Hence we merely need to show that $f_A(0) + f_B(0) = f_{A+B}(0)$, and we would be done.
However this follows easily by noting that (by definition) $f_X(0) = 0$ for all $X$.
\end{proof}

Note that Lemma~\ref{lem:sum_reduction} relied heavily on the fact that $A$ and $B$ were $\M \times \M$ matrices.
This came in the form of the assertion that the first $\M$ moments completely characterize the distribution (which of course is not true for more general distributions).

\subsubsection{Relation to Voiculescu}

We wish to show that the our definition is, in fact, a generalization of Voiculescu's R-transform.
We first show that this is the case for a fixed distribution.
\begin{lemma}
\label{lem:converge_to_k}
Let $A$ be a Hermitian operator with compactly supported spectral distribution $\mu_A$.
Then
\[
\lim_{\M \to \infty} \myinvcauchy{\M}{\mu_A}{s} = \invcauchy{\mu_A}{s}
\]
at all points $s \in (\rho_A, \infty)$.
\end{lemma}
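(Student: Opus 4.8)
The plan is to show that as $\M \to \infty$, the $L^\M$-norm appearing in the definition of $\myinvcauchy{\M}{\mu_A}{s}$ converges to an $L^\infty$-norm, which I can then identify with a supremum via Corollary~\ref{cor:sup_norm}, and finally recognize that supremum (after taking $-\deriv{s}\ln$) as the inverse Cauchy transform. Concretely, recall that for a fixed $s \in (\rho_A,\infty)$ and a measure absolutely continuous with respect to Lebesgue measure on $(\rho_A,\infty)$, one has the general $L^p$-norm limit $\lim_{p\to\infty}\mynorm{g}{p} = \infnorm{g}$. The subtlety is that here the exponent and the function both move: we take $\mynorm{e^{-xs}\fkdet{xI-A}}{\M}$, and both the norm index and the integrand stay fixed in $A$ but the index is $\M \to \infty$, so this is genuinely a $p\to\infty$ limit of a \emph{fixed} function $g(x) = e^{-xs}\fkdet{xI-A}$ (for the operator $A$, $\fkdet{xI-A}$ is a fixed continuous function, namely $\exp\!\big(\int \ln(x-t)\,\mu_A(t)\,\d{t}\big)$ on $(\rho_A,\infty)$). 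So $\lim_{\M\to\infty}\mynorm{e^{-xs}\fkdet{xI-A}}{\M} = \infnorm{e^{-xs}\fkdet{xI-A}} = \sup_{x > \rho_A}\{e^{-xs}\fkdet{xI-A}\}$ by Lemma~\ref{lem:inf_norm}.

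**Next I would** compute that supremum. Writing $\Phi(x) = \int \ln(x-t)\,\mu_A(t)\,\d{t}$ so that $\fkdet{xI-A} = e^{\Phi(x)}$, the quantity to maximize is $e^{-xs+\Phi(x)}$, i.e. we maximize $\Phi(x) - xs$ over $x > \rho_A$. Since $\Phi$ is concave on $(\rho_A,\infty)$ (it is an average of the concave functions $\ln(x-t)$) and $\Phi'(x) = \int \frac{\mu_A(t)}{x-t}\,\d{t} = \cauchy{\mu_A}{x}$, the unique maximizer $x = x(s)$ is characterized by the first-order condition $\cauchy{\mu_A}{x(s)} = s$, that is, $x(s) = \invcauchy{\mu_A}{s}$. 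This is exactly where the inverse Cauchy transform enters. Plugging back in, $\ln\infnorm{e^{-xs}\fkdet{xI-A}} = \Phi(x(s)) - x(s)s$, which is (up to sign) the Legendre transform of $-\Phi$ evaluated at $-s$; in any case it is a smooth function of $s$ on $(\rho_A,\infty)$ whose derivative, by the envelope theorem (the $x$-derivative vanishes at the maximizer), equals $-x(s) = -\invcauchy{\mu_A}{s}$.

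**Then** I assemble: since $\myinvcauchy{\M}{\mu_A}{s} = -\deriv{s}\ln\mynorm{e^{-xs}\fkdet{xI-A}}{\M}$, I want to pass the limit $\M\to\infty$ through the derivative. Granting that, $\lim_{\M\to\infty}\myinvcauchy{\M}{\mu_A}{s} = -\deriv{s}\ln\infnorm{e^{-xs}\fkdet{xI-A}} = -\deriv{s}\big(\Phi(x(s)) - x(s)s\big) = x(s) = \invcauchy{\mu_A}{s}$, which is the claim. To justify the interchange of limit and derivative, I would note that the functions $s \mapsto \ln\mynorm{e^{-xs}\fkdet{xI-A}}{\M}$ are concave in $s$ (the $L^p$-norm of $e^{-xs}g(x)$ is a log-convex-type object; more simply, $\ln\int e^{-\M x s}h(x)\,\d{x}$ is convex in $s$ as a cumulant generating function, so after the $1/\M$ scaling and sign it behaves well), and a pointwise limit of concave functions that is itself differentiable has the property that the derivatives converge; alternatively one argues directly that $-\frac{1}{\M}\deriv{s}\ln\int e^{-\M xs}h(x)\,\d{x} = \frac{\int x\,e^{-\M xs}h(x)\,\d{x}}{\int e^{-\M xs}h(x)\,\d{x}}$, a weighted average of $x$ against a measure that concentrates (Laplace/saddle-point) at the maximizer $x(s)$ as $\M\to\infty$.

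**The main obstacle** I expect is precisely this last interchange and the Laplace-type concentration argument: one must show the ratio $\frac{\int_{\rho_A}^\infty x\,e^{\M(\Phi(x)-xs)}\,\d{x}}{\int_{\rho_A}^\infty e^{\M(\Phi(x)-xs)}\,\d{x}}$ converges to $x(s)$, which requires control near the boundary $x = \rho_A$ (where $\Phi$ may blow down to $-\infty$) and a non-degeneracy statement at the interior maximizer — i.e. that $\Phi'' < 0$ there, equivalently $\cauchy{\mu_A}{\cdot}$ is strictly decreasing, which holds for a nontrivial $\mu_A$. Everything else (the concavity of $\Phi$, the envelope computation, Corollary~\ref{cor:sup_norm}, Lemma~\ref{lem:inf_norm}) is routine; the care lies in making the saddle-point heuristic rigorous for a general compactly supported spectral measure, including the degenerate case where $\mu_A$ is a point mass (which is trivial to handle separately).
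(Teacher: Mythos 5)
Your argument is correct and is essentially the paper's own proof: your maximization of $\Phi(x)-xs$ with the first-order condition $\cauchy{\mu_A}{x(s)}=s$ and the envelope computation is exactly the paper's application of Lemma~\ref{lem:legendre}, Corollary~\ref{cor:legendre2}, and Corollary~\ref{cor:sup_norm} to the strictly convex function $f(x)=-\ln\fkdet{xI-A}=-\Phi(x)$, followed by Lemma~\ref{lem:inf_norm}. If anything you are more careful than the paper, which passes from the $L^{\M}$-norm to the $L^{\infty}$-norm inside $-\deriv{s}\ln(\cdot)$ without justifying the interchange of limit and derivative that you correctly single out (via concavity in $s$ or Laplace concentration at $x(s)$) as the one nontrivial analytic point.
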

\begin{proof}
We first note that the function
\[
f(x) = -\ln \fkdet{xI - A}
\]
exists and is continuous on $(\rho_A, \infty)$.
Furthermore, we have
\[
f''(x) = \tr{ (xI - A)^2 } > 0
\]
and so $f$ is strictly convex.
By Corollary~\ref{cor:legendre2}, we therefore have ${f'}^{-1}(s) = {f^*}'(s)$
where
\[
f'(x) = -\tr{(xI - A)^{-1}} = -\cauchy{\mu_A}{x}
\]
and $\cauchy{\mu_A}{x}$ is the Cauchy transform.
Hence
\[
{f'}^{-1}(s) = \left( -\cauchy{\mu_A}{x} \right)^{-1}(s) = \invcauchy{\mu_A}{-s}.
\]
Plugging this into Corollary~\ref{cor:sup_norm} gives
\[
\invcauchy{\mu_A}{s} = {f^*}'(-s) = -\deriv{s} \ln \infnorm{ e^{-xs} \fkdet{xI - A}}
\]
The result then follows from Lemma~\ref{lem:inf_norm}.
\end{proof}

\subsection{Multiplicative convolution}\label{sec:mult}

We follow the same path as in the additive case.
Note that this time we require the spectral distribution to be positive almost surely, just as in Voiculescu's theory.

\begin{definition}
Let $A$ be a positive definite operator with compactly supported spectral distribution $\mu_A$.
For an integer $\M$, we define the power series $\myinvmtrans{\M}{\mu_A}{s}$ by
\begin{equation}
\label{eq:myN}
\ln \myinvmtrans{\M}{\mu_A}{s} = -\deriv{s} \ln \mynorm{ e^{-xs} \fkdet{I - e^{-x}A} }{\M}
\end{equation}
with the domain of integration being $(\ln \rho_A, \infty)$.
We call $\myinvmtrans{\M}{\mu_A}{s}$ the {\em $\M$-finite N-transform of 
$\mu_A$}.
\end{definition}

Again it should be clear that $\myinvmtrans{\M}{\mu_A}{s}$ is invariant under unitary transformations of $A$.
Similar to before, the $\M$-finite N-transform will be the analogue of the inverse M-transform from Voiculescu's theory.
We then define the {\em $\M$-finite S-transform} by
\begin{equation}
\label{eq:myS}
\ln \mystrans{\M}{\mu_A}{s} = \ln \myinvmtrans{\M}{\mu_A}{s} - \ln \myinvmtrans{\M}{\mu_I}{s}
\end{equation}
where $I$ is the identity operator.
Note that we have dropped the ``modified'' adjective since no other such thing exists in this context.
This time calculating $\myinvmtrans{\M}{\mu_I}{s}$ is a bit more involved; using the calculation in Lemma~\ref{lem:mult_reduction}, we get
\[
\ln \myinvmtrans{\M}{\mu_I}{s} = -\frac{1}{\M} \deriv{s} \ln \left(\frac{\Gamma({\M s})\M!}{\Gamma(\M s + \M + 1)}\right) = \psi(\M s + \M + 1) - \psi(\M s)
\]
where $\psi(x) = \deriv{s} \ln \Gamma(x)$ is the {\em digamma function}.
Once again one can view this as the discrete version of the familiar $(s+1)/s$ term that is multiplied with the inverse M-transform to get the S-transform in Voiculescu's theory.
To see this, we can use the standard asymptotic approximation for the digamma function $\psi(x) = \ln(x) + O(1/x)$, giving
\[
\lim_{\M \to \infty} \myinvmtrans{\M}{\mu_I}{s} = \lim_{\M \to \infty} \frac{\M s + \M + 1}{\M s} = \frac{s+1}{s}
\]
as expected. 

\subsubsection{Relation to polynomial convolutions}

As before, we start by computing the case that $A$ is an $\M \times \M$ matrix.
\begin{lemma}
\label{lem:mult_compute}
Let $A$ be an $\M \times \M$ positive definite matrix with $T_A$ the \marcus~transform of $\eigen{A}$.
Then 
\[
\frac{\mynorm{ e^{-xs} \fkdet{I - e^{-x}A} }{\M}^{\M}}{\mynorm{ e^{-xs} \fkdet{I - e^{-x}I} }{\M}^{\M}}
=
\rho_A^{-\M s}f_A(s)
\]
where $f_A(x)$ is the unique degree $\M$ polynomial which satisfies
\[
f_A\left(-\frac{k}{\M}\right) = \E{}{\left(\frac{T_A}{\rho_A}\right)^k}
\]
for all integers $0 \leq k \leq \M$.
\end{lemma}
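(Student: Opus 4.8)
The plan is to evaluate the $\M$-th power of the $L^{\M}$ norm in the numerator as an explicit integral, reduce it by a change of variables to a Beta-type integral, and then recognize the ratio with the $A=I$ case as the interpolating polynomial $f_A$.

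First I would note that on the domain of integration $(\ln \rho_A, \infty)$ the matrix $I - e^{-x}A$ is positive definite, so $\fkdet{I - e^{-x}A}^{\M} = \mydet{I - e^{-x}A}$ there; moreover $e^{-xs}$ and this determinant are positive on that interval, so $\mynorm{ e^{-xs}\fkdet{I - e^{-x}A} }{\M}^{\M} = \int_{\ln\rho_A}^{\infty} e^{-\M xs}\mydet{I - e^{-x}A}\d{x}$ with no absolute values to worry about. Applying Lemma~\ref{lem:marcus_transform} to $\mydet{yI - A} = \E{}{(y - T_A)^{\M}}$ and pulling out a factor of $e^{-\M x}$ gives $\mydet{I - e^{-x}A} = \E{}{(1 - e^{-x}T_A)^{\M}}$. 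The substitution $v = \rho_A e^{-x}$ then turns the integral into $\rho_A^{-\M s}\,g_A(s)$, where $g_A(s) = \int_0^1 v^{\M s-1}\E{}{(1 - vT_A/\rho_A)^{\M}}\d{v}$, an integral that converges for $s > 0$. Expanding $(1 - vT_A/\rho_A)^{\M}$ binomially and integrating term by term produces the partial-fraction form $g_A(s) = \sum_{j=0}^{\M}\binom{\M}{j}(-1)^j\E{}{(T_A/\rho_A)^j}\,(\M s + j)^{-1}$.

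For $A = I$ one has $\rho_I = 1$ and $\E{}{(1 - vT_I)^{\M}} = (1-v)^{\M}$ (equivalently, the first $\M$ moments of $T_I$ are all $1$), so $g_I(s) = \int_0^1 v^{\M s-1}(1-v)^{\M}\d{v}$ is the Beta integral $\Gamma(\M s)\,\M!\,/\,\Gamma(\M s + \M + 1) = \M!\,/\,\prod_{i=0}^{\M}(\M s + i)$. Multiplying $g_A(s)$ by $\prod_{i=0}^{\M}(\M s+i)$ clears every denominator, so $g_A(s)/g_I(s) = (\M!)^{-1}\sum_{j=0}^{\M}\binom{\M}{j}(-1)^j\E{}{(T_A/\rho_A)^j}\prod_{i \neq j}(\M s + i)$ is a polynomial in $s$ of degree at most $\M$. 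Evaluating it at $s = -k/\M$ for an integer $0 \le k \le \M$, the factor $\M s + k$ annihilates every summand except $j = k$, and since $\prod_{i\ne k}(i - k) = (-1)^k k!\,(\M-k)!$ the surviving term is exactly $\E{}{(T_A/\rho_A)^k}$. Hence $g_A/g_I$ agrees with $f_A$ at the $\M+1$ distinct points $0, -\tfrac{1}{\M}, \dots, -1$ and therefore equals $f_A$. Since the denominator norm equals $\rho_I^{-\M s} g_I(s) = g_I(s)$, the quotient is $\rho_A^{-\M s}\, g_A(s)/g_I(s) = \rho_A^{-\M s} f_A(s)$, as claimed.

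The only delicate point is the bookkeeping around convergence and the change of variables: one needs $s > 0$ so that $v^{\M s-1}$ is integrable at $v = 0$ (equivalently, so the Laplace-type integral over $(\ln\rho_A, \infty)$ converges), and one should check that the term-by-term integration is legitimate on a finite sum. Everything else is routine, with the one conceptual step being the observation that dividing by $g_I$ is precisely the operation that converts the partial-fraction expression for $g_A$ back into a polynomial, whose values at the nodes $-k/\M$ read off the rescaled moments $\E{}{(T_A/\rho_A)^k}$.
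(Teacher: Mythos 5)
Your proof is correct and follows essentially the same route as the paper: reduce the norm to a Laplace/Beta-type integral, expand binomially into the partial-fraction form $\sum_j \binom{\M}{j}(-1)^j\E{}{(T_A/\rho_A)^j}/(\M s+j)$, evaluate the $A=I$ case as $\M!\,\Gamma(\M s)/\Gamma(\M s+\M+1)$, observe that dividing clears all poles, and verify the interpolation values at $s=-k/\M$. The only (cosmetic) difference is that your substitution $v=\rho_A e^{-x}$ lands directly on the Beta integral for $g_I$, whereas the paper reaches the same closed form via an auxiliary antiderivative $f(x)$ with $f'(x)=x^{\M s-1}(1-x)^{\M}$.
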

\begin{proof}
We expand
\begin{align*}
\mynorm{ e^{-xs} \fkdet{I - e^{-x}A} }{\M}^{\M} 
&= \int_{\ln \rho_A}^{\infty} e^{-x{\M}s} \E{}{(1 - e^{-x}T_A)^{\M}} \d{x} && 
(y := x - \ln \rho_A)
\\&= \rho_A^{-{\M}s} \int_{0}^{\infty} e^{-y{\M}s} \E{}{(1 - e^{-y}T_A/\rho_A)^{\M}} \d{y}
\\&= \rho_A^{-{\M}s} \L{\E{}{(1 - e^{-y}T_A/\rho_A)^{\M}}}(\M s)
\end{align*}
where
\[
\L{(1 - e^{-x}T_A/\rho_A)^{\M}}({\M}s)
=
\sum_{i=0}^{\M} \binom{\M}{i}(-T_A/\rho_A)^{i}\L{e^{-ix}}({\M}s)
= 
\sum_{i=0}^{\M} \binom{\M}{i}\frac{(-T_A/\rho_A)^{i}}{{\M}s + i}.
\]
so that 
\begin{equation}
\label{eq:poles}
\mynorm{ e^{-xs} \fkdet{I - e^{-x}A} }{\M}^{\M}
=
 \rho_A^{-{\M}s} \sum_{i=0}^{\M} \binom{\M}{i}\frac{\E{}{(-T_A/\rho_A)^{i}}}{{\M}s + i}
\end{equation} 
Now when $A = I$ (the identity), we have $\ln \rho_A = 0$ and $T_A = 1$, so 
that 
\[
\mynorm{ e^{-xs} \fkdet{I - e^{-x}I} }{\M}^{\M} = \sum_{i=0}^{\M} \binom{\M}{i}\frac{(-1)^{i}}{{\M}s + i}.
\]
To find a closed form solution to this sum, we can set
\[
f(x) = \sum_{i=0}^{\M} \frac{(-1)^i \binom{\M}{i}x^{\M s + i}}{\M s + i}
\]
so that 
\[
f'(x) = \sum_{i=0}^{\M} (-1)^i \binom{\M}{i}x^{\M s + i - 1} = x^{\M s -1}(1-x)^{\M}.
\]
Hence 
\[
f(1) - f(0) = \int_{0}^1 x^{\M s-1}(1-x)^{\M} \d{x} = \beta(\M s, \M+1)
\]
where $\beta$ is the well known {\em Beta function}.
Assuming $s > 0$, we have $f(0) = 0$ and so 
\[
f(1) = \int_{0}^1 x^{\M s -1}(1-x)^{\M} \d{x} 
= \frac{\Gamma({\M s})\Gamma(\M + 1)}{\Gamma(\M s + \M + 1)} 
= \M!\frac{\Gamma({\M s})}{\Gamma(\M s + \M + 1)}.
\]
Hence we have 
\[
\frac{\mynorm{ e^{-xs} \fkdet{I - e^{-x}A} }{\M}^{\M}}{\mynorm{ e^{-xs} \fkdet{I - e^{-x}I} }{\M}^{\M}}
=
\frac{\rho_A^{-\M s}}{\M!}\frac{\Gamma(\M s + \M + 1)}{\Gamma({\M s})}\sum_{i=0}^{\M} \binom{\M}{i}\frac{\E{}{(-T_A/\rho_A)^{i}}
}{\M s + i}.
\]
Since 
\[
\frac{\Gamma(\M s + \M + 1)}{\Gamma({\M s})} = \prod_{i=0}^{\M} (\M s + i)
\]
the poles that appear in Equation~(\ref{eq:poles}) will be eliminated.
In particular, we can write 
\[
f_A(s) := \rho_A^{\M s}\frac{\mynorm{ e^{-xs} \fkdet{I - e^{-x}A} }{\M}^{\M}}{\mynorm{ e^{-xs} \fkdet{I - e^{-x}I} }{\M}^{\M}}.
\]
where $f_A(s)$ is a degree $\M$ polynomial. 
One can then check that
\[
f_A\left(-\frac{k}{\M}\right) = \frac{1}{\M!} \binom{\M}{k} 
\E{}{(-T_A/\rho_A)^{k}} \prod_{\substack{i=0 \\ i\neq k}}^d (i-k) = 
\E{}{\left(\frac{T_A}{\rho_A} \right)^{k}}
\]
for integers $0 \leq k \leq \M$, which therefore uniquely determines it.   
\end{proof}

Note that if one started with a collection of $\M + 1$ points
\[
\left( -\frac{k}{\M}, \E{}{\left(\frac{T_A}{\rho_A} \right)^{k}} \right),
\]
and used the method of Lagrange interpolation to build a degree $\M$ polynomial going through those points, the formula would produce exactly $f_A(s)$.
Of course it would have to produce $f_A(s)$ in some form (since it is uniquely 
determined), but in situations where there were not enough points to completely 
determine the polynomial, this observation might be useful.
%
%Lemma~\ref{lem:mult_compute} provides an easy way to find a polynomial with a 
%given $\M$-finite S-transform:
%\begin{corollary}\label{cor:strans}
%Let $Q(x)$ be a degree $\polynomial.\[
%\ln \mystrans{\M}{\mu_A}{s} 
%= -\frac{1}{d} \frac{\partial}{\partial s} 
%\ln  g(s)
%\]
%for some function $g(s)$.
%Then 
%\[
%\mydet{x I - A} = \sum_i x^{\M-i} (-1)^i g\left(-\frac{k}{\M}\right)$
%$g(-k/\M)
%\]
%\end{corollary}
%\begin{proof}
%Unpacking (\ref{eq:myS}) gives 
%\begin{align*}
%\ln \mystrans{\M}{\mu_A}{s} 
%&= \ln \myinvmtrans{\M}{\mu_A}{s} - \ln \myinvmtrans{\M}{\mu_I}{s}
%\\&= - \frac{1}{\M} \frac{\partial}{\partial s} \ln \frac{\mynorm{ e^{-xs} 
%\fkdet{I - e^{-x}A} 
%}{\M}^{\M}}{\mynorm{ 
%e^{-xs} \fkdet{I - e^{-x}I} }{\M}^{\M}}.
%\end{align*}
%and so the obvious choice for $g(s)$ is 
%\[
%g(s) = \frac{\mynorm{ e^{-xs} \fkdet{I - e^{-x}A} 
%}{\M}^{\M}}{\mynorm{ 
%e^{-xs} \fkdet{I - e^{-x}I} }{\M}^{\M}}
%\]
%By Lemma~\ref{lem:mult_compute}, there exists a unique degree $\M$ polynomial 
%$f_A(x)$ for which 
%\[
%g(s) = \rho_A^{-\M s} f_A(x)
%\]
%which satisfies
%\[
%g\left(\frac{-k}{d}\right) 
%= \rho_A^k \E{}{\left(\frac{T_A}{\rho_A}\right)^k}
%= \E{}{T_A^k}.
%\]
%By definition, the \marcus-transform satisfies $p(x) = \E{}{(x I - T_A)^{\M}}$ 
%and so equating coefficients of $x$ gives
%\[
%g\left(\frac{-k}{d}\right)  
%= \E{}{T_A^k} = p_k
%\]
%
%\end{proof}

\begin{remark}\label{rem:bases}

There is a noticeable difference between Lemma~\ref{lem:add_compute} (which 
produces a generating function characterized by its coefficients) and 
Lemma~\ref{lem:mult_compute} (which produces a polynomial characterized by 
evaluations).
On the other hand, the (Voiculescu) S-transform seemingly keeps all of its 
information in its coefficients, similar to the way the (Voiculescu) 
R-transform does, and so one can ask how this difference is resolved.

In this respect, we can turn to Taylor's theorem --- another way to think about 
coefficients in an expansion is as the collection of derivatives at $0$.
However Taylor's theorem can be extended to other polynomial bases using the 
theory of umbral calculus \cite{umbral}.
In particular, if one defines the {\em $t$-backward difference operator} as
\[
\nabla_t[f](x) = \frac{f(x) - f(x-t)}{t}
\]
then the umbral analogue to Taylor's theorem is
\[
f(x) 
= \sum_{k=0}^\infty \nabla^k_t[f](a) \frac{(x-a)(x-a-t)(x-a-2t) \dots 
(x-a-(k-1)t)}{k!}
:= \sum_{k=0}^\infty \nabla^k_t[f](a) \frac{p_{t,k}(x-a)}{k!}
\]
with the polynomials $p_{t, k}(x)$ forming a basis for $\R[x]$ (in the case 
that $t = 1$, this is called the ``rising factorial'' basis).
Hence the evaluations of $f_A$ in Lemma~\ref{lem:mult_compute} 
are in 1-1 correspondence with the values $\{ \nabla^k_{1/\M}[f_A](0) \}$.
Furthermore, it is easy to see that 
\[
\lim_{\M \to \infty} \nabla^k_{1/\M}[f](x) = \frac{\partial^k f}{(\partial 
x)^k}(x)
\AND
\lim_{\M \to \infty} p_{1/\M,k}(x)  = x^k
\]
and so the bases converge in the limit.
\end{remark}

\begin{lemma}\label{lem:mult_reduction}
Let $A$ and $B^{-1}$ be $\M \times \M$ positive definite matrices.
Then the following are equivalent:
\begin{enumerate}
\item $\mystrans{\M}{\mu_A}{-k/m}\mystrans{\M}{\mu_B}{-k/m} = \mystrans{\M}{\mu_{AB}}{-k/m}$ for all $0 \leq k \leq m$
\item $\mydet{xI - A} \sqmult_{\M} \mydet{xI - B} = \mydet{xI - AB}$
\end{enumerate}
\end{lemma}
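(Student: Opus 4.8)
The plan is to mirror the proof of Lemma~\ref{lem:sum_reduction}: I will reduce each of the two conditions to the \emph{same} statement about the moments of the relevant \marcus~transforms, namely that the first $\M$ of them multiply. Let $T_A$, $T_B$ and $T_{AB}$ be the \marcus~transforms of $\eigen{A}$, $\eigen{B}$ and $\eigen{AB}$ respectively, with $T_A$ and $T_B$ taken to be independent. By Lemma~\ref{lem:poly_conv}, condition~2 holds if and only if $\E{}{(x - T_A T_B)^{\M}} = \E{}{(x - T_{AB})^{\M}}$; since both sides are monic of degree $\M$, this is equivalent to $T_A T_B$ and $T_{AB}$ having the same first $\M$ moments, and by independence of $T_A$ and $T_B$ this is the same as $\E{}{T_A^k}\E{}{T_B^k} = \E{}{T_{AB}^k}$ for all $0 \le k \le \M$ (the $k=0$ instance being the triviality $1\cdot 1 = 1$).

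For the other direction I would unwind the definition of the finite S-transform. Writing $N_X(s)$ for the ratio $\mynorm{e^{-xs}\fkdet{I - e^{-x}X}}{\M}^{\M}\big/\mynorm{e^{-xs}\fkdet{I - e^{-x}I}}{\M}^{\M}$, Equations~(\ref{eq:myN}) and (\ref{eq:myS}) give $\ln \mystrans{\M}{\mu_X}{s} = -\tfrac{1}{\M}\deriv{s}\ln N_X(s)$, so condition~1 is equivalent to the vanishing of $\deriv{s}\ln\big(N_A N_B / N_{AB}\big)$ at each of the $\M+1$ lattice points $s = -k/\M$, $0 \le k \le \M$. Now Lemma~\ref{lem:mult_compute} identifies $N_X(s)$ with $\rho_X^{-\M s} f_X(s)$, where $f_X$ is the genuine degree-$\M$ polynomial interpolating the points $\big(-k/\M,\ \E{}{(T_X/\rho_X)^k}\big)$; in particular $N_X(-k/\M) = \E{}{T_X^k}$, and $N_X(0) = \E{}{T_X^0} = 1$ for every $X$. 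Thus the moment condition from the first paragraph reads exactly $N_A(-k/\M)\,N_B(-k/\M) = N_{AB}(-k/\M)$ for $0 \le k \le \M$, and what remains is to see that this is equivalent to condition~1. This is the multiplicative counterpart of the coefficient/derivative bookkeeping in Lemma~\ref{lem:sum_reduction}: the additive convolution records its data in the coefficients of the standard basis, where one compares power series up to order $s^{\M+1}$ versus their derivatives up to order $s^{\M}$, whereas (as anticipated in Remark~\ref{rem:bases}) the multiplicative convolution records its data in the evaluations at $\{-k/\M\}$, and one must compare the $N_X$ against their ``discrete logarithmic derivatives'' $\mystrans{\M}{\mu_X}{\cdot}$ on this lattice, the passage between the two being controlled by the fact that the $k=0$ evaluation is always pinned to $1$ (the multiplicative analogue of antidifferentiating and fixing the constant term).

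The step I expect to be the main obstacle is exactly this last equivalence, because the finite S-transform is built from a logarithmic derivative and so is, a priori, sensitive to more of $N_X$ than just its $\M+1$ values on the lattice. What saves the argument is the pole cancellation carried out inside Lemma~\ref{lem:mult_compute}: the normalization by $\mu_I$ turns the rational function of Equation~(\ref{eq:poles}) into the honest polynomial $\rho_X^{-\M s} f_X(s)$, whose $\M+1$ evaluations at the points $-k/\M$ are \emph{equivalent data} to the first $\M+1$ moments of $T_X$, and, crucially, for an $\M\times\M$ matrix those moments determine $\mu_X$ outright, just as Lemma~\ref{lem:sum_reduction} leaned on the first $\M$ moments being a complete invariant. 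Carefully matching up which evaluation of $\mystrans{\M}{\mu_X}{\cdot}$ encodes which moment, and checking that the $k=0$ boundary instance is automatic, is the technical core; once that dictionary is in place, condition~1 $\Leftrightarrow$ condition~2 is just substitution into Lemmas~\ref{lem:poly_conv} and~\ref{lem:mult_compute}.
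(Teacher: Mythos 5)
Your reduction of condition~2 is exactly the paper's: via Lemma~\ref{lem:poly_conv} and the independence of $T_A$ and $T_B$, condition~2 is equivalent to $\E{}{T_A^k}\E{}{T_B^k} = \E{}{T_{AB}^k}$ for $0 \le k \le \M$, and via Lemma~\ref{lem:mult_compute} these are precisely the statements $N_A(-k/\M)\,N_B(-k/\M) = N_{AB}(-k/\M)$ in your notation. The problem is that the step you defer --- passing between these lattice evaluations of $N_X$ and condition~1, which is a statement about the \emph{logarithmic derivative} of $N_X$ at the same lattice points --- is the entire content of the equivalence, and your proposal stops at announcing it as ``the technical core'' rather than proving it. The values of a function at $\M+1$ points neither determine nor are determined by the values of its logarithmic derivative at those points, and the pole cancellation in Lemma~\ref{lem:mult_compute} does not by itself supply the missing dictionary: $f_A f_B$ has degree $2\M$ while $f_{AB} f_I$ has degree $\M$ (indeed $f_I \equiv 1$), so agreement at the $\M+1$ lattice points does not extend to an identity of functions whose logarithmic derivative you could then evaluate.

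The paper closes this step by integrating rather than interpolating: condition~1 asserts the vanishing of $\deriv{s} \ln \left( N_A N_B / N_{AB} \right)$, which antidifferentiates to $N_A(s) N_B(s) = c\, N_{AB}(s)$ for a constant of integration $c$, and $c = 1$ because $N_X(0) = \E{}{T_X^0} = 1$ for every $X$; evaluating at $s = -k/\M$ then yields exactly the moment identities, and the chain is reversed for the converse. You correctly observed that the $k=0$ normalization is what pins the constant, so you have all the ingredients in hand; what is missing is committing to this integration step --- that is, treating condition~1 as an identity of finite S-transforms that can be antidifferentiated, as the paper (tersely) does --- and then writing out both implications. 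As it stands, the proposal establishes $2 \Leftrightarrow (\text{moment identities})$ but proves neither direction of $1 \Leftrightarrow (\text{moment identities})$, so the lemma is not yet proved.
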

\begin{proof}
Similar to before, let $T_A, T_B, T_{AB}$ be the \marcus~transforms of $\eigen{A}, \eigen{B}, \eigen{AB}$ respectively and treating $T_A$ and $T_B$ as independent random variables.
Plugging in the definition of the finite S-transform and integrating, Equation~{\em 1.} is equivalent to having
\[
\mynorm{ e^{-xs} \fkdet{I - e^{-x}A} }{\M}\mynorm{ e^{-xs} \fkdet{I - e^{-x}B} }{\M} = c\mynorm{ e^{-xs} \fkdet{I - e^{-x}AB} }{\M}\mynorm{ e^{-xs} \fkdet{I - e^{-x}I} }{\M}
\]
for some constant $c$.
By Lemma~\ref{lem:mult_compute} this can be rewritten as  
\begin{equation}
\label{eq:compare}
(\rho_A\rho_B)^{-\M s} f_A(s) f_B(s) = c(\rho_{AB})^{-\M s} f_{AB}(s) f_I(s)
\end{equation}
where the polynomials $f_X$ have the property that $f_X(-k/m) = \rho_X^{-k} \E{}{ T_X^k }$ (where $T_I = 1$).
Hence plugging in $-k/m$ into Equation~(\ref{eq:compare}) gives
\[
\E{}{T_A^k}\E{}{T_B^k} = c\E{}{T_{AB}}^k
\]
where the value of $c$ can be deduced from the $k=0$ case (so $c = 1$).
Since $T_A$ and $T_B$ are independent, the previous holds if and only if $T_AT_B$ and $T_{AB}$ have the same first $\M$ moments.
This is equivalent to the statement 
\[
\E{}{(x - T_AT_B)^{\M}} = \E{}{(x - T_{AB})^{\M}}.
\]
which is equivalent to {\em 2.} by Lemma~\ref{lem:poly_conv}.
\end{proof}
%
%The proofs of Lemma~\ref{lem:sum_reduction} and Lemma~\ref{lem:mult_reduction} 
%show how the the information concerning moments is being held in a 
%fundamentally different way in the multiplicative case than the additive case.
%That is, the information regarding additive convolutions seems to live 
%naturally in the coefficients of a power series, whereas the information 
%regarding multiplicative convolutions seems more natural living in the poles 
%of 
%some rational function (or, when normalized, the values of a function at 
%various points).
%Even in the classical case, the method of forming an additive convolution 
%using the exponential moment generating function is quite popular, but a 
%search 
%through the literature does not provide much in the way of multiplicative 
%analogues.
%This suggests that the Hadamard product of polynomials (which is, up to 
%scaling, what the multiplicative convolution is doing) is a fundamentally 
%different operation than other more standard power series operations.

\subsubsection{Relation to Voiculescu}

We now show that the our definition is, again, a generalization of the modified S-transform (see Section~\ref{sec:free_prob} for the difference between this and Voiculescu's version).

\begin{lemma}
\label{lem:converge_to_n}
Let $A$ be a positive definite operator with compactly supported spectral distribution $\mu_A$.
Then
\[
\lim_{\M \to \infty} \myinvmtrans{\M}{\mu_A}{s} = \invmtrans{\mu_A}{s}
\]
at all points $s \in (\ln \rho_A, \infty)$.
\end{lemma}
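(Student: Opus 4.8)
The plan is to reproduce the argument of Lemma~\ref{lem:converge_to_k}, with the multiplicative change of variables $y = e^{x}$ turning the Cauchy transform into the M-transform. Set
\[
g(x) = -\ln \fkdet{I - e^{-x}A}.
\]
For $x > \ln \rho_A$ we have $e^{-x} < \rho_A^{-1}$, so $I - e^{-x}A$ is positive definite; hence $g$ exists and is continuous on $(\ln \rho_A, \infty)$, and $e^{-xs}\fkdet{I - e^{-x}A} = e^{-xs - g(x)}$ is exactly the object appearing in Equation~(\ref{eq:myN}).

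Next I would compute the first two derivatives of $g$. Writing $\mu_A$ for the spectral measure gives $g(x) = -\int \ln(1 - e^{-x}t)\,\d{\mu_A(t)}$, so
\[
g'(x) = -\int \frac{t}{e^{x} - t}\,\d{\mu_A(t)} = 1 - e^{x}\cauchy{\mu_A}{e^{x}} = -\mtrans{\mu_A}{e^{x}},
\]
and $g''(x) = \int \frac{e^{x}t}{(e^{x}-t)^{2}}\,\d{\mu_A(t)} > 0$, using that $A$ is positive definite so that $\mu_A$ is supported on $(0,\infty)$. Thus $g$ is strictly convex on $(\ln \rho_A, \infty)$, so Corollary~\ref{cor:legendre2} applies and ${g^*}'(s) = (g')^{-1}(s)$. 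Unwinding the inverse: $g'(x) = -s$ is equivalent to $\mtrans{\mu_A}{e^{x}} = s$, i.e. $e^{x} = \invmtrans{\mu_A}{s}$, so $(g')^{-1}(-s) = \ln \invmtrans{\mu_A}{s}$ for the relevant $s$.

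Finally I would feed $g$ into Corollary~\ref{cor:sup_norm} over $X = (\ln \rho_A, \infty)$ with the variable $-s$: since $e^{-xs}\fkdet{I - e^{-x}A} = e^{x(-s) - g(x)}$ and $x(-s) - g(x) \to -\infty$ at both endpoints of $X$ (so $-s \in X^{*}$ and this exponential is bounded), one gets $g^{*}(-s) = \ln \infnorm{e^{-xs}\fkdet{I - e^{-x}A}}$, and therefore
\[
\ln \invmtrans{\mu_A}{s} = (g')^{-1}(-s) = {g^*}'(-s) = -\deriv{s} \ln \infnorm{e^{-xs}\fkdet{I - e^{-x}A}}.
\]
Comparing this with Equation~(\ref{eq:myN}), the lemma reduces to the facts that $\mynorm{e^{-xs}\fkdet{I - e^{-x}A}}{\M} \to \infnorm{e^{-xs}\fkdet{I - e^{-x}A}}$ as $\M \to \infty$ (which follows from Lemma~\ref{lem:inf_norm}, as in Lemma~\ref{lem:converge_to_k}, the $L^{\M}$ norms being finite by the endpoint estimates just used) and that this limit may be interchanged with $\deriv{s}$. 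The interchange of $\lim_{\M}$ with $\deriv{s}$ is the one genuinely technical point, precisely of the flavor flagged in Section~\ref{sec:Laplace}, and I expect it to be the main obstacle; I would handle it as in the additive case, noting that each $s \mapsto \ln \mynorm{e^{-xs}\fkdet{I - e^{-x}A}}{\M}$ is convex in $s$, so that its pointwise limit, being differentiable (which the strict convexity of $g$ guarantees via Corollary~\ref{cor:legendre2}), is approached in derivative as well. Exponentiating then gives the claim.
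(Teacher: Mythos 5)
Your proposal is correct and follows essentially the same route as the paper: the same convex function $-\ln \fkdet{I - e^{-x}A}$, the same identification $f'(x) = -\mtrans{\mu_A}{e^{x}}$, the same application of Corollary~\ref{cor:legendre2} and Corollary~\ref{cor:sup_norm}, and the same appeal to Lemma~\ref{lem:inf_norm} at the end. Your extra care about interchanging $\lim_{\M}$ with $\deriv{s}$ (via convexity in $s$) addresses a point the paper passes over silently, but it does not change the argument.
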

\begin{proof}
This time we consider the function
\[
f(x) = -\ln \fkdet{I - e^{-x}A}.
\]
$f(x)$ exists and is continuous on the interval $(\ln \rho_A, \infty)$ and we have
\[
f''(x) = \tr{Ae^{-x}(I - Ae^{-x})^{-1}} + \tr{A^2e^{-2x}(I - Ae^{-x})^{-2}}
\]
which is strictly positive for $x > \ln \rho_A$ and so $f$ is strictly convex.
By Corollary~\ref{cor:legendre2}, we therefore have
\[
{f^*}'(s) = {f'}^{-1}(s) = \left( -\mtrans{\mu_A}{e^x} \right)^{-1}(s) = \ln \invmtrans{\mu_A}{-s}
\]
where 
\[
f'(x) = -\tr{Ae^{-x}(I - Ae^{-x})^{-1}} = \tr{I - e^x(e^xI - A)^{-1}} = 1 - e^x \cauchy{\mu_A}{e^x}  = -\mtrans{\mu_A}{e^x}.
\]
Plugging this into Corollary~\ref{cor:sup_norm} gives
\[
\ln \invmtrans{\mu_A}{s} = {f^*}'(-s) = -\deriv{s} \ln \infnorm{ e^{-xs} \fkdet{I - e^{-x}A} }
\]
and so the lemma then follows from Lemma~\ref{lem:inf_norm}.
\end{proof}

Note that when $A$ is a matrix with eigenvalues $r_1, \dots, r_{\M}$, the function $\fkdet{I - e^{-x}A}$ has zeroes exactly when $x = \ln r_i$.
Hence this seems to be encoding the fact that the addition of logs is the log of multiplication without actually operating on the log of the operator itself.

\section{Finite freeness}\label{sec:ff_position}

One of the appealing attributes of free probability is the fact that one can ``instantiate'' freeness.
That is, given two distributions $\mu_A$ and $\mu_B$, one can find operators $A$ and $B$ with those spectral distributions such that $\mu_A \boxplus \mu_B = \mu_{A+B}$ \cite{voiculescu}.
Of course finding such instances is hard (hence the need for convolutions).
That said, the mere knowledge that an additive convolution {\em could} be achieved by simple addition is quite useful, as there is much known about the relationship between the eigenvalues of a sum and the eigenvalues of the summands.
For example, one can deduce a trivial bound on the spectral radius of the sum given the addends $\rho_{A+B} \leq \rho_A + \rho_B$, an inequality that is not directly obvious from the definition of the additive convolution.

In this section we will attempt to define a concept of finite freeness. 
Note that this is the completely opposite ordering of how one would typically develop free probability.
Rather than defining freeness and then investigating how operations act with respect to that definition, we defined a collection of operations and will try to characterize the property that leads to them.
This is because ``the property'' is no longer a universal trait and will depend on the operation in question.
This is both a blessing and a curse: on the one hand it means that we will not have the convenience of just calling things ``free'' and then doing arbitrary things to them. 
On the other hand, it will allow us to apply our theory to operators that satisfy much a weaker constraint than ``freeness''.
In particular, we will be able to ``instantiate'' finite freeness using 
(computable) matrices, something that was not possible in free probability. 
A consequence of this will be a collection of majorization relations that we 
prove in Section~\ref{sec:majorization}.

We should note that we will intentionally blur the lines between polynomials and (classes of) real symmetric matrices in this section.
Both will be associated to a multiset, either via its roots (in the case of a polynomial) or its eigenvalues (in the case of matrices).
In this regard, the convolutions defined in Section~\ref{sec:formulas} can be extended to multisets by operating on the monic polynomials with the elements of that multiset as roots.
Equivalently, it can be extended to a (rotation invariant) operation on (classes of) real symmetric matrices by operating on the characteristic polynomials of those matrices.
Because all of the convolutions preserve real rootedness \cite{conv}, it is plausible that they could coincide with classical matrix operations, and this will fuel our definition of finite freeness.

\begin{definition}
We will say two $\M \times \M$ real symmetric matrices $A$ and $B$ are {\em in finite free position} (or simply {\em $\M$-free}) if 
\[
\mydet{xI - yA - zB} 
= \mydet{xI - yA} \sqsum_{\M} \mydet{xI - zB}
\]
for all $y, z \in \R$.
\end{definition}
\begin{remark}
Because of the heightened role that additive convolution plays in probability theory (for example, in many of the limit theorems in Section~\ref{sec:applications}), we will take a noticably ``additive centric'' view.
This is done to avoid having to resort to conventions like ``finitely multiplicatively free,'' but whether such terms are inevitable is another question.
So, at least in regards to this paper, the term ``free'' will be used in association with an additive property.

Also worth noting is that, at least with respect to the additive convolution, our definition is stronger than it needs to be (requiring the equality to work on all real numbers $y$ and $z$ rather than just $y = z = 1$).
It turns out that the increased structure will make more sense when applied to the multiplicative convolution, but it would be interesting to explore the effects of weakening these conditions.
\end{remark}

To show that finite freeness exists, we will use some results in the theory of hyperbolic polynomials.
Given a vector $\vec{e}$, a homogeneous polynomial $p$ is said to be {\em 
hyperbolic with respect to} $\vec{e}$ if $p(\vec{e}) \neq 0$ and the univariate 
polynomial $\hat{p}(t) = p(\vec{x} - t \vec{e})$ has only real roots for all 
points $x$. 
The canonical example of a hyperbolic polynomial is the determinant acting on the space of real symmetric matrices: the determinant is hyperbolic with respect to the identity matrix since $\mydet{tI - X}$ is real rooted for all symmetric matrices $X$ (the roots then being simply the eigenvalues).
An extremely useful characterization of hyperbolic polynomials \cite{lax, HV}:

\begin{theorem}
\label{thm:HV}
A polynomial $p$ on $\R^3$ is hyperbolic of degree $\M$ with respect to the vector $\vec{e} = (1, 0, 0)$ and satisfies $p(\vec{e})=1$ if and only if there exist $\M \times \M$ real symmetric matrices $B, C$ such that
\[
p(x, y, z) 
= \mydet{xI + yB + zC}.
\]
\end{theorem}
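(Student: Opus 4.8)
The plan is to treat the two directions completely asymmetrically: one is a one-line computation, the other is a famous deep theorem.

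First I would dispatch the ``if'' direction directly. Suppose $B, C$ are $\M \times \M$ real symmetric matrices and put $p(x,y,z) = \mydet{xI + yB + zC}$. Every entry of $xI + yB + zC$ is a linear form in $(x,y,z)$, so $p$ is homogeneous, of degree exactly $\M$ because the coefficient of $x^{\M}$ is $\mydet{I} = 1$; in particular $p(\vec e) = \mydet{I} = 1 \neq 0$. For hyperbolicity with respect to $\vec e$, fix an arbitrary point $\vec x = (x_0, y_0, z_0)$ and observe that
\[
\hat p(t) = p(\vec x - t\vec e) = \mydet{(x_0 - t)I + y_0 B + z_0 C} = (-1)^{\M} \mydet{tI - M},
\]
where $M = x_0 I + y_0 B + z_0 C$ is a real symmetric matrix. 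Thus $\hat p$ is, up to sign, the characteristic polynomial of $M$, and its roots are the (real) eigenvalues of $M$. That is exactly the definition of $p$ being hyperbolic with respect to $\vec e$, and this direction uses nothing beyond the spectral theorem.

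The ``only if'' direction is the substantive content: it is precisely the Helton--Vinnikov theorem resolving the Lax conjecture \cite{lax, HV}. I would not attempt a self-contained proof of it --- the remainder of the paper needs only the statement --- but would invoke it as a black box, with a short indication of the mechanism for orientation. A degree-$\M$ polynomial on $\R^3$ that is hyperbolic with respect to $\vec e$ and normalized by $p(\vec e) = 1$ cuts out a real plane curve $\{p = 0\}$ of degree $\M$ whose intersection with every real line through $\vec e$ consists of $\M$ real points (so, in the smooth case, the curve is a maximal nest of ovals, a ``dividing'' curve). Helton and Vinnikov construct the representation $p = \mydet{xI + yB + zC}$ by analyzing a suitable theta characteristic (line bundle) on the normalization of this curve and reading the symmetric matrices off Riemann's theta-function identities, building on Vinnikov's earlier classification of self-adjoint determinantal representations of real algebraic curves.

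The main obstacle is thus entirely confined to this converse direction, and it is not one I would expect to settle here: it requires compact Riemann surfaces, theta functions, and the theory of self-adjoint determinantal representations of plane curves, all well outside the scope of this paper. Accordingly the intended role of this characterization in what follows is purely as an existence statement --- it lets us instantiate finite freeness by honest real symmetric matrices rather than merely by real-rooted polynomials.
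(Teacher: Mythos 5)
Your treatment matches the paper's: the theorem is quoted there without proof, with the substance attributed to \cite{lax, HV}, and your appeal to the Helton--Vinnikov theorem for the ``only if'' direction is exactly the intended reading. Your verification of the ``if'' direction via the spectral theorem is correct and is the standard observation; nothing further is needed.
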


A corollary to Theorem~\ref{thm:HV} is a rather strong statement concerning the existence of matrices in finite free position.

\begin{lemma}\label{lem:ff_position}
For any real symmetric matrices $A, B$ there exists a rotation matrix $R$ such that $A$ and $R^TBR$ are in finite free position.
\end{lemma}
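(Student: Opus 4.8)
The plan is to realize the symmetric additive convolution as the expected characteristic polynomial of a sum of \emph{conjugated} matrices and then show that this expectation is actually attained by a single well-chosen rotation. Concretely, recall from the definition of $\sqsum_{\M}$ that
\[
[\mydet{xI - A} \sqsum_{\M} \mydet{xI - B}](x)
= \E{Q}{\mydet{xI - A - QBQ^T}},
\]
and more generally, replacing $A$ by $yA$ and $B$ by $zB$,
\[
[\mydet{xI - yA} \sqsum_{\M} \mydet{xI - zB}](x)
= \E{Q}{\mydet{xI - yA - zQBQ^T}}.
\]
So what we must produce is a \emph{single} rotation $R$ for which
\[
\mydet{xI - yA - zR^TBR}
= \E{Q}{\mydet{xI - yA - zQBQ^T}}
\]
holds identically in $x, y, z$.

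First I would consider the trivariate polynomial
\[
P(x, y, z) = \E{Q}{\mydet{xI - yA - zQBQ^T}}.
\]
This is a homogeneous polynomial of degree $\M$ in $(x,y,z)$ (each determinant inside the expectation is homogeneous of degree $\M$, and homogeneity is preserved under taking expectations). I claim $P$ is hyperbolic with respect to $\vec{e} = (1,0,0)$: indeed, by the real-rootedness result quoted from \cite{conv, BB2}, for any fixed real $y, z$ the polynomial $x \mapsto [\mydet{xI - yA} \sqsum_{\M} \mydet{xI - zB}](x) = P(x,y,z)$ has only real roots, which is exactly the statement that $\hat P(t) = P(x_0 - t, y_0, z_0)$ has only real roots for all $(x_0,y_0,z_0)$ (after the linear substitution tracking the $\vec e$ direction). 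Also $P(\vec e) = P(1,0,0) = \E{Q}{\mydet{I}} = 1$. Therefore Theorem~\ref{thm:HV} applies and gives $\M \times \M$ real symmetric matrices $B', C'$ with
\[
P(x,y,z) = \mydet{xI + yB' + zC'}.
\]

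The remaining step is to identify $B'$ and $C'$ with $-A$ and $-R^TBR$ for a suitable rotation $R$. Setting $z = 0$ gives $\mydet{xI + yB'} = P(x,y,0) = \mydet{xI - yA}$ for all $x,y$, so $B'$ and $-A$ have the same characteristic polynomial (as a polynomial in $x$, for every $y$), hence the same eigenvalues; similarly setting $y = 0$ and using that $\E{Q}{\mydet{xI - zQBQ^T}} = \mydet{xI - zB}$ (since $\mydet{xI - zQBQ^T} = \mydet{xI - zB}$ for every orthogonal $Q$) shows $C'$ has the same eigenvalues as $-B$. By simultaneously conjugating the pair $(B', C')$ by a fixed rotation we may assume $B' = -A$ exactly; then $C'$ is a real symmetric matrix with the eigenvalues of $-B$, so $C' = -R^T B R$ for some rotation $R$. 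This yields
\[
\mydet{xI - yA - zR^TBR} = P(x,y,z) = [\mydet{xI - yA}\sqsum_{\M}\mydet{xI - zB}](x)
\]
for all $y, z \in \R$, which is precisely the assertion that $A$ and $R^TBR$ are in finite free position.

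The main obstacle is the verification that $P$ is genuinely hyperbolic with respect to $(1,0,0)$ rather than merely having real roots in the $x$-variable for each fixed $(y,z)$ --- one must be slightly careful that the real-rootedness quoted from \cite{conv} holds for \emph{all} real scalings $y,z$ simultaneously (not just $y=z=1$), and that the directional form required in the definition of hyperbolicity follows. This is why the definition of $\sqsum_{\M}$ with arbitrary $y, z$ (via scaling the matrices) is invoked; once that is in hand, Theorem~\ref{thm:HV} does the heavy lifting, and the identification of $B', C'$ with $-A$ and a rotation of $-B$ is a routine eigenvalue-matching argument using the $z=0$ and $y=0$ specializations.
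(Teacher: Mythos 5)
Your proposal is correct and follows essentially the same route as the paper: form the trivariate polynomial given by the convolution $[\mydet{xI - yA} \sqsum_{\M} \mydet{xI - zB}]$, verify it is hyperbolic with respect to $(1,0,0)$ using the real-rootedness of the convolution for all real scalings, invoke Theorem~\ref{thm:HV} to obtain a determinantal representation $\mydet{xI + yU + zV}$, and then match eigenvalues at the $z=0$ and $y=0$ specializations to identify $U$ and $V$ with $-A$ and $-R^TBR$ after conjugation. The only cosmetic difference is that you phrase the polynomial as an expected characteristic polynomial over Haar-random $Q$ rather than directly as the convolution, which changes nothing.
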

\begin{proof}
Consider the polynomial
\[
p(x, y, z) 
= \mydet{xI - yA} \sqsum_{\M} \mydet{xI - zB}
\]
We first show that $p$ is hyperbolic with respect to the vector $\vec{e} = (1, 0, 0)$.

Clearly $p(1, 0, 0) \neq 0$ and so it remains to show that $\hat{p}(x_0 - t, y_0, z_0)$ is real rooted for all fixed values $x_0, y_0, z_0$.
Plugging in, we get
\begin{align}
\hat{p}(x_0 - t, y_0, z_0) 
&= \mydet{(x_0 - t)I - y_0A} \sqsum_{\M} \mydet{(x_0 - t)I - z_0B}
\notag \\&= (-1)^{\M} \mydet{tI + \hat{A}} \sqsum_{\M} (-1)^{\M} \mydet{tI + \hat{B}} \label{eq:real_roots}
\end{align} 
where 
\[
\hat{A} 
= y_0A - x_0I
\AND
\hat{B} 
= z_0B - x_0I
\]
are both Hermitian.
However, the fact that \ref{eq:real_roots} is always real rooted when $A$ and $B$ are Hermitian was shown in \cite{conv} (as noted in Section~\ref{sec:formulas}).

Hence $p$ meets the requirements for Theorem~\ref{thm:HV}, which means it can be written 
in the form 
\[
p(x, y, z) 
= \mydet{xI + yU + z V}
\]
for some matrices $U$ and $V$.
Since $p(x, 1, 0) = \mydet{xI - A}$, it must be that $\eigen{U} = \eigen{-A}$, and similarly, $\eigen{V} = \eigen{-B}$.
In particular there exists rotations $P$ and $Q$ such that 
\[
P^T U P 
= -A
\AND
Q^T V Q 
= -B
\]
Let $R = Q^T P$.
Then we have 
\begin{align*}
p(x, y, z) 
&= \mydet{xI + yU + z V}
\\&= \mydet{xI + yU - Q B Q^T}\mydet{P^TP}
\\&= \mydet{xI - yA - P^T Q B Q^T P}
\\&= \mydet{xI - yA - R^T B R}
\end{align*}
which is exactly the statement that $A$ and $R^TBR$ are in finite free position.
\end{proof}

Equation~(\ref{eq:add_conv}) then provides a collection of identities that characterize matrices in finite free position.
The decomposition of determinants into mixed discriminants shown by Equation~(\ref{eq:binomial}) allow us to state these identities explicitly:
\begin{lemma}
\label{lem:ff_equality}
For two $\M \times \M$ real symmetric matrices $A$ and $B$, the following are equivalent:
\begin{enumerate}
\item $A$ and $B$ are in finite free position
\item $\M!~\D{A[j], B[i], I[\M-j-i]} = \D{ A[j], I[\M-j]}\D{B[i], I[\M-i]}$ for all $i, j$.
\end{enumerate}
\end{lemma}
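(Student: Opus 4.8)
The plan is to unwind the definition of ``finite free position'' into a single identity between two polynomials in the three variables $x,y,z$, and then extract the claimed equivalence by comparing coefficients. For each fixed $x$, both $\mydet{xI - yA - zB}$ and $\mydet{xI - yA} \sqsum_{\M} \mydet{xI - zB}$ are polynomials in $y$ and $z$, so the requirement that they agree for all real $y,z$ is equivalent to their being equal as elements of $\R[x,y,z]$, and hence to the equality, for every pair $(i,j)$ with $i+j \le \M$, of the coefficient of the monomial $x^{\M-i-j} y^{j} z^{i}$ on the two sides. Thus it suffices to compute those coefficients on each side.

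First I would expand the left-hand side. Treating $-yA - zB$ as a single matrix and applying the mixed-discriminant decomposition of Equation~(\ref{eq:binomial}) gives
\[
\mydet{xI - yA - zB} = \sum_{k} x^{\M-k} \binom{\M}{k}\, \D{(-yA - zB)[k], I[\M-k]},
\]
and then multilinearity and permutation-invariance of $\D{}$ rewrite each summand as $\sum_{i+j=k} \binom{k}{j} (-y)^{j} (-z)^{i}\, \D{A[j], B[i], I[\M-k]}$. After reindexing, the coefficient of $x^{\M-i-j} y^{j} z^{i}$ on the left is therefore a fixed, nonzero rational multiple of $\D{A[j], B[i], I[\M-i-j]}$.

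Next I would expand the right-hand side. Equation~(\ref{eq:binomial}) applied to $\mydet{xI - yA}$ and $\mydet{xI - zB}$ identifies the coefficients that enter Equation~(\ref{eq:add_conv}): in the notation there, $p_{j}$ equals $y^{j}\binom{\M}{j}\D{A[j], I[\M-j]}$ and $q_{i}$ equals $z^{i}\binom{\M}{i}\D{B[i], I[\M-i]}$. Substituting these into the linear formula of Equation~(\ref{eq:add_conv}) shows that the coefficient of $x^{\M-i-j} y^{j} z^{i}$ on the right is a fixed, nonzero rational multiple of $\D{A[j], I[\M-j]}\, \D{B[i], I[\M-i]}$. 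Equating the left and right coefficients for each $(i,j)$ and cancelling the (explicitly computable, nonzero) combinatorial prefactors then yields exactly the family of identities $\M!\,\D{A[j], B[i], I[\M-i-j]} = \D{A[j], I[\M-j]}\,\D{B[i], I[\M-i]}$; since all steps are reversible, running the argument backward gives the converse. (Lemma~\ref{lem:ff_position} guarantees this condition is nonvacuous, though that is not needed here.)

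The one place requiring genuine care rather than routine manipulation is the bookkeeping of the combinatorial factors: one must check that, after the substitutions, the prefactor $\frac{(\M-i)!(\M-j)!}{(\M-i-j)!\,\M!}$ from Equation~(\ref{eq:add_conv}) combines with the two binomials $\binom{\M}{j}$ and $\binom{\M}{i}$ to match — up to precisely the factor $\M!$ appearing in the statement — the prefactor $\binom{\M}{i+j}\binom{i+j}{j}$ produced by the left-hand expansion. Everything else is formal: the reduction to a three-variable polynomial identity, and the fact that the monomials $x^{\M-i-j} y^{j} z^{i}$ are linearly independent, so that the equivalence indeed holds term by term over all admissible $(i,j)$.
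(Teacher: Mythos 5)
Your overall strategy is exactly the paper's: the paper's proof of this lemma is literally ``expand the relation using Equation~(\ref{eq:add_conv}) and equate coefficients,'' and you have fleshed out that expansion correctly in structure --- reduce to a polynomial identity in $\R[x,y,z]$, expand the left side by multilinearity of $\D{}$, read off $p_j$ and $q_i$ via Equation~(\ref{eq:binomial}), and compare the coefficient of $x^{\M-i-j}y^jz^i$.

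However, the one step you defer --- the bookkeeping of the combinatorial prefactors --- is precisely where the argument as written does not close. You assert that $\frac{(\M-i)!(\M-j)!}{(\M-i-j)!\,\M!}\binom{\M}{j}\binom{\M}{i}$ differs from $\binom{\M}{i+j}\binom{i+j}{j}$ by a factor of $\M!$; in fact both equal $\frac{\M!}{i!\,j!\,(\M-i-j)!}$, so your computation, carried out literally, yields condition~\emph{2.} \emph{without} the $\M!$. The source of the trouble is that Equation~(\ref{eq:binomial}) as printed is inconsistent with the paper's normalization $\D{A,\dots,A}=\M!\,\mydet{A}$ (test it with $A=0$, $\M=2$: it gives $\mydet{xI}=2x^2$); the correct expansion is $\mydet{xI+A}=\frac{1}{\M!}\sum_k x^{\M-k}\binom{\M}{k}\D{A[k],I[\M-k]}$. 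Once that $1/\M!$ is restored, the left-hand coefficient carries a single factor $1/\M!$ (one determinant), while the right-hand coefficient carries two (one from each of $p_j$ and $q_i$), and the mismatch is exactly the $\M!$ in the statement. So your proof is salvageable and essentially the intended one, but you should verify the constants rather than asserting them, and in doing so you will need to correct the normalization of Equation~(\ref{eq:binomial}) against the paper's definition of the mixed discriminant.
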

\begin{proof}
Simply expand the relation 
\[
\mydet{xI - yA - zB} 
= \mydet{xI - yA} \sqsum_{\M} \mydet{xI - zB}
\]
using Equation~(\ref{eq:add_conv}) and equate coefficients.
\end{proof}

In general, the rotation guaranteed by Lemma~\ref{lem:ff_position} may not be unique.
Since determinants are invariant under rotations, we have
\[
p(x, y, z) 
= \mydet{xI - yA - zB} 
= \mydet{R^T(xI - yA - zB)R} 
= \mydet{xI - yR^TAR - zR^TBR}
\]
and so any rotation $R$ that leaves $A$ unchanged provides a new rotation of $B$ that is in finite free position with $A$.
The extreme case of this is when $A = I$ (the identity), which is in finite free position with every matrix.
Since $I$ is freely independent (in the Voiculescu sense) from all operators, this should be of little surprise.
However, this fact has useful consequences:

\begin{corollary}
\label{cor:ff_translate}
If $A$ and $B$ are in finite free position, then $sA + tI$ and $uB + vI$ are in finite free position for any $s, t, u, v \in \R$.
\end{corollary}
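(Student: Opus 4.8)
The plan is to fix $y,z\in\R$ and verify the defining identity for $A':=sA+tI$ and $B':=uB+vI$ directly, namely
\[
\mydet{xI-yA'-zB'}=\mydet{xI-yA'}\sqsum_{\M}\mydet{xI-zB'}.
\]
First I would absorb the scalars: writing $y(sA+tI)=(ys)A+(yt)I$ and similarly for $B'$, one has
\[
\mydet{xI-yA'-zB'}=\mydet{(x-yt-zv)I-(ys)A-(zu)B}.
\]
The point of this rearrangement is that the multipliers $s,u$ have been pushed into the convolution parameters and the translations $t,v$ into the spectral variable. Now I would invoke the hypothesis that $A,B$ are in finite free position: the identity $\mydet{wI-\eta A-\zeta B}=\mydet{wI-\eta A}\sqsum_{\M}\mydet{wI-\zeta B}$ holds as polynomials in $w$ for \emph{all} real $\eta,\zeta$ — in particular for $\eta=ys$, $\zeta=zu$, and with no need to assume $s,u\neq 0$. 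Applying it and then substituting $w=x-yt-zv$ yields
\[
\mydet{xI-yA'-zB'}=\bigl[\mydet{\cdot\,I-(ys)A}\sqsum_{\M}\mydet{\cdot\,I-(zu)B}\bigr](x-yt-zv).
\]

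The remaining step is a translation property of $\sqsum_{\M}$: for monic degree-$\M$ polynomials $p,q$ and $\alpha,\beta\in\R$ one has $[p(\cdot-\alpha)\sqsum_{\M}q(\cdot-\beta)](x)=[p\sqsum_{\M}q](x-\alpha-\beta)$. I would deduce this from Lemma~\ref{lem:poly_conv} together with the translation invariance of the \marcus~transform established right after that lemma: if $S,T$ are the independent \marcus~transforms of the roots of $p,q$, then those of $p(\cdot-\alpha)$ and $q(\cdot-\beta)$ are $S+\alpha$ and $T+\beta$, so the left side equals $\E{}{(x-(S+\alpha)-(T+\beta))^{\M}}=\E{}{((x-\alpha-\beta)-S-T)^{\M}}$, which is the right side. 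Since $\mydet{xI-yA'}=\mydet{(x-yt)I-(ys)A}$, the polynomial $\mydet{\cdot\,I-(ys)A}$ shifted by $yt$ is exactly $\mydet{\cdot\,I-yA'}$, and likewise $\mydet{\cdot\,I-(zu)B}$ shifted by $zv$ is $\mydet{\cdot\,I-zB'}$; applying the translation property with $\alpha=yt$, $\beta=zv$ turns the last display into
\[
\mydet{xI-yA'-zB'}=\mydet{xI-yA'}\sqsum_{\M}\mydet{xI-zB'},
\]
which is precisely the statement that $A'$ and $B'$ are in finite free position. As $y,z$ were arbitrary, this completes the argument.

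I do not expect a serious obstacle here; the only thing requiring care is the bookkeeping of the two translations — one must check that the single shift $yt+zv$ appearing on the left is correctly split between the two convolution arguments, $\mydet{\cdot\,I-yA'}$ absorbing $yt$ and $\mydet{\cdot\,I-zB'}$ absorbing $zv$, which works exactly because $\sqsum_{\M}$ adds the two \marcus~transforms. An alternative is to run the same computation at the level of mixed discriminants using the characterization in Lemma~\ref{lem:ff_equality}, but the translation-of-convolution route above seems cleanest.
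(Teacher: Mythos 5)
Your proof is correct and is exactly the expansion of the paper's one-line justification (``follows directly from the definition''): the quantification over all $y,z$ in the definition absorbs the scalings $s,u$, and the translations $t,v$ are handled by the shift property of $\sqsum_{\M}$, which you correctly derive from Lemma~\ref{lem:poly_conv} and the translation invariance of the \marcus~transform. The bookkeeping of the split $yt+zv$ between the two convolution arguments is right, and the degenerate cases ($ys=0$ or $zu=0$, where one factor becomes $x^{\M}$) cause no trouble.
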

\begin{proof}
Follows directly from the definition.
\end{proof}

Despite the lack of uniqueness, it would be useful to characterize any property of finite free position that can be guaranteed (either by all or by some rotation).
The next lemma is a step in that direction:

\begin{lemma}\label{lem:ff_position2}
Let $A$ and $B$ be $\M \times \M$ real symmetric matrices with $A$ diagonal.
Then there exists a rotation matrix $R$ such that $A$ and $R^TBR$ are in finite free position \textbf{and} the diagonal of $R^TBR$ is constant.
\end{lemma}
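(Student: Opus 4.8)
The plan is to apply Lemma~\ref{lem:ff_position} to obtain a first rotation putting $B$ into finite free position with $A$, and then to post-compose it with a rotation that commutes with $A$ so as to flatten the diagonal without disturbing finite free position. Write $A = \mathrm{diag}(\mu_1 I_{m_1}, \dots, \mu_r I_{m_r})$, where $\mu_1 > \cdots > \mu_r$ are the distinct eigenvalues of $A$ and $m_1 + \cdots + m_r = \M$, and partition the coordinates into the corresponding $r$ blocks. By Lemma~\ref{lem:ff_position} there is a rotation $R_1$ with $A$ and $B_1 := R_1^T B R_1$ in finite free position. Observe that for any rotation $G$ commuting with $A$ --- equivalently, any block-diagonal $G = \mathrm{diag}(G_1, \dots, G_r)$ with $G_s$ orthogonal of size $m_s$ --- the matrices $A$ and $G^T B_1 G$ are again in finite free position: indeed $\mydet{xI - yA - z G^T B_1 G} = \mydet{xI - y G A G^T - z B_1} = \mydet{xI - yA - z B_1}$ and $\mydet{xI - z G^T B_1 G} = \mydet{xI - z B_1}$, so the defining identity for $A$ and $B_1$ transfers verbatim to $A$ and $G^T B_1 G$. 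Hence it suffices to choose such a $G$ with $G^T B_1 G$ having constant diagonal, and then $R := R_1 G$ will satisfy the conclusion.

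The heart of the argument is a constraint that finite free position with a diagonal matrix forces on the diagonal blocks of its partner. \emph{Claim: if $C$ is any matrix in finite free position with $A$ as above, then for each $s$ the trace of the $s$-th diagonal block of $C$ equals $m_s \tr C$.} To prove it, expand $\mydet{xI - yA - zC} = \mydet{xI - yA} \sqsum_{\M} \mydet{xI - zC}$ and compare coefficients of $z^1$. On the left, differentiating in $z$ at $z=0$ gives $-\Tr{C\,\mathrm{adj}(xI - yA)}$; since $xI - yA$ is diagonal its adjugate is diagonal with $(k,k)$-entry $\prod_{l \neq k}(x - y a_l) = \mydet{xI - yA}/(x - y\mu_{s(k)})$, where $s(k)$ is the block of $k$, so grouping by block this coefficient is $-\mydet{xI - yA}\sum_{s} D_s/(x - y\mu_s)$, with $D_s$ the trace of the $s$-th block of $C$. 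On the right, using the linear formula~(\ref{eq:add_conv}) with $\mydet{xI - zC} = x^{\M} - z x^{\M-1}\Tr C + \cdots$, only the $z^1$ term of the second factor contributes, and the factorial constants collapse it to $-\tfrac{\Tr C}{\M}$ times the $x$-derivative of $\mydet{xI - yA}$, i.e.\ to $-\tfrac{\Tr C}{\M}\,\mydet{xI - yA}\sum_{s} m_s/(x - y\mu_s)$ by logarithmic differentiation. Cancelling $\mydet{xI - yA}$ and comparing partial fraction expansions at any fixed $y \neq 0$ (the poles $y\mu_s$ are distinct) yields $D_s = \tfrac{\Tr C}{\M} m_s = m_s \tr C$, proving the claim. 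This coefficient extraction --- and in particular checking that the normalization constants land exactly on the logarithmic derivative --- is the only real computation, and I expect it to be the main obstacle.

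To finish, apply the claim with $C = B_1$: each diagonal block $(B_1)_{ss}$ is an $m_s \times m_s$ real symmetric matrix whose trace is $m_s \tr{B_1} = m_s \tr B$. By the elementary direction of the Schur--Horn theorem --- every real symmetric matrix is orthogonally conjugate to one whose diagonal entries all equal its average eigenvalue, which one proves directly by repeatedly using a planar rotation to equalize a pair of diagonal entries and inducting on the size --- there is an orthogonal $G_s$ so that $G_s^T (B_1)_{ss} G_s$ has every diagonal entry equal to $\tr B$. Set $G = \mathrm{diag}(G_1, \dots, G_r)$. Since $G$ is block-diagonal, the $s$-th diagonal block of $G^T B_1 G$ is exactly $G_s^T (B_1)_{ss} G_s$, so $G^T B_1 G$ has constant diagonal $\tr B$; and $A$ and $G^T B_1 G$ are in finite free position by the first paragraph. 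Thus $R := R_1 G$ has the required properties.
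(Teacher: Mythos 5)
Your proof is correct and follows essentially the same route as the paper's: both extract the linear-in-$B$ part of the defining identity to show that the diagonal of the partner matrix has the prescribed sums over each eigenblock of $A$, and then flatten the diagonal within each block using rotations that commute with $A$. The only differences are cosmetic --- you do the coefficient extraction via the adjugate/Jacobi formula rather than via the mixed-discriminant identities of Lemma~\ref{lem:ff_equality}, you skip the normalization $\tr{B}=0$, and you spell out the Givens-rotation averaging step that the paper merely asserts.
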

\begin{proof}
By Corollary~\ref{cor:ff_translate}, the statement is true for all pairs $(A, B)$ if and only if is true for all pairs $(A, B - \tr{B}I)$, so without loss of generality, we can assume $\tr{B} = 0$.
Let $Q$ be the rotation guaranteed by Lemma~\ref{lem:ff_position}.
For any matrix $X$, we have the relation $\D{X, I[\M-1]} = \M!\tr{X}$, so Lemma~\ref{lem:ff_equality} implies
\[
\M!~\D{A[j], Q^TBQ, I[\M-j-1]} 
= \D{ A[j], I[\M-j]} \D{Q^TBQ, I[\M-1]} 
= 0
\]
for all $j$.
In particular, taking various linear combinations of these gives
\[
\D{(x I - A)[\M-1], Q^TBQ} 
= 0
\]
for all values of $x$.
Letting $p(x) = \mydet{xI - A}$, this gives
\[
p(x) \sum_i \frac{b_i}{x - a_i} 
= 0
\]
where $b_i$ is the $i$th diagonal entry of $R^TBR$ (and same for $a_i$).
Since the set of values for which $p(x) = 0$ is finite, continuity implies that
\[
\sum_i \frac{b_i}{x - a_i} 
= 0
\]
for all $x$.

Note that for each distinct value of $u \in \{ a_i \}$, we can choose $x$ sufficiently close to $u$ such that 
the terms with $1 / (x- u)$ dominate the other terms.
Thus for the sum to be $0$, it must be that the set of $b_i$ for which $a_i = u$ sum to zero (in the case that there is a unique $a_i$, then the corresponding $b_i$ must equal $0$).

Recall from the previous discussion that, given $A$ and $B$ in finite free position, we have that $A$ and $S^T B S$ are in finite free position for any rotation $S$ such that $S^TAS = A$.
This is exactly the set of rotations that act independently on the eigenspaces of $A$, which (because $A$ Is diagonal) are the set of rotations that act on the submatrices of $A$ for which the diagonal entries are the same.
Thus we can pick a rotation $S$ that averages the elements of the diagonal of $B$ corresponding to a single eigenspace of $A$.
Thus the rotation $R = QS$ places $A$ and $R^TBR$ in finite free position and causes $R^TBR$ to have a $0$ diagonal.
\end{proof}

We now observing the analogous statement to Lemma~\ref{lem:ff_equality} for the multiplicative convolution:
\begin{lemma}
\label{lem:ff_mult_equality}
For two $\M \times \M$ real symmetric matrices $A$ and $B$, the following are equivalent:
\begin{enumerate}
\item $\mydet{xI - AB} = \mydet{xI - A} \sqmult_{\M} \mydet{xI - B}$
\item $\M!~\D{AB[i], I[\M-i]} = \D{ A[i], I[\M-i]} \D{B[i], I[\M-i]}$ for all $i$.
\end{enumerate}
\end{lemma}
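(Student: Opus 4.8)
The plan is to follow the proof of Lemma~\ref{lem:ff_equality} essentially verbatim, replacing the additive linear formula by the multiplicative one. First I would pass from the three characteristic polynomials to their coefficient sequences: applying Equation~(\ref{eq:binomial}) to $-A$, $-B$, and $-AB$ (and using multilinearity of $\D{\cdot}$ in each slot, so that $\D{(-A)[i],I[\M-i]} = (-1)^i\D{A[i],I[\M-i]}$), the coefficient of $x^{\M-i}$ in $\mydet{xI-A}$ is $(-1)^i$ times a fixed constant depending only on $\M$ and $i$ times $\D{A[i],I[\M-i]}$, and likewise for $B$ and for $AB$ with the \emph{same} constant. I would then apply Equation~(\ref{eq:mult_conv}) to read off the coefficient of $x^{\M-i}$ in $\mydet{xI-A}\sqmult_{\M}\mydet{xI-B}$ as $(-1)^i$ times (the $A$-coefficient) $\times$ (the $B$-coefficient) $/\binom{\M}{i}$.

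The equivalence is then just ``equate coefficients.'' Both $\mydet{xI-AB}$ and $\mydet{xI-A}\sqmult_{\M}\mydet{xI-B}$ are monic of degree $\M$, so statement~1 holds if and only if their coefficients of $x^{\M-i}$ agree for all $i$; substituting the expressions above and performing the purely combinatorial simplification of the constants and binomial coefficients, the $i$-th such equality becomes exactly
\[
\M!\,\D{(AB)[i],I[\M-i]} = \D{A[i],I[\M-i]}\,\D{B[i],I[\M-i]},
\]
i.e.\ statement~2, with the factor $\M!$ produced by the factorial bookkeeping exactly as in Lemma~\ref{lem:ff_equality}. Reading the same chain of equalities backwards gives the reverse implication.

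I do not anticipate a genuine obstacle here: this is the multiplicative mirror of Lemma~\ref{lem:ff_equality}, and the only thing requiring care is tracking the factorials so that the $\M!$ lands on the $\D{(AB)[i],I[\M-i]}$ side. It is worth noting that, unlike the definition of finite free position, statement~1 is a single-variable polynomial identity in $x$ --- no homogenization in auxiliary variables $y,z$ is involved --- so one is merely comparing the $\M+1$ coefficients of two univariate degree-$\M$ polynomials, which makes the ``equate coefficients'' step completely routine.
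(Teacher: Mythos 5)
Your proposal is correct and is essentially identical to the paper's own (one-line) proof, which likewise expands both sides via Equation~(\ref{eq:binomial}) and Equation~(\ref{eq:mult_conv}) and equates coefficients of $x^{\M-i}$. The only care needed is the factorial bookkeeping coming from the paper's normalization of the mixed discriminant, which you explicitly flag.
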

\begin{proof}
Simply expand the relation 
\[
\mydet{xI - AB} 
= \mydet{xI - A} \sqmult_{\M} \mydet{xI - B}
\]
using Equation~(\ref{eq:mult_conv}) and equate coefficients.
\end{proof}

Hence we have the following relationship between finite freeness and the multiplicative convolution:
\begin{corollary}
\label{lem:ff_mult}
If $B$ is invertible and $A$ and $B^{-1}$ are in finite free position, then 
\[
\mydet{xI - AB} 
= \mydet{xI - A} \sqmult_{\M} \mydet{xI - B}
\]
\end{corollary}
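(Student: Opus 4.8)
The plan is to reduce, via Lemma~\ref{lem:ff_mult_equality}, to establishing the single family of mixed-discriminant identities
\[
\M!\,\D{AB[i], I[\M-i]} = \D{A[i], I[\M-i]}\,\D{B[i], I[\M-i]} \qquad (0 \le i \le \M),
\]
and then to derive each of these from the finite-free-position identity for $A$ and $B^{-1}$ supplied by Lemma~\ref{lem:ff_equality}, namely
\[
\M!\,\D{A[i], B^{-1}[\M-i]} = \D{A[i], I[\M-i]}\,\D{B^{-1}[\M-i], I[i]}.
\]
The bridge between the two will be the multiplicativity property $\D{CX_1,\dots,CX_\M} = \mydet{C}\,\D{X_1,\dots,X_\M}$ of the mixed discriminant.

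First I would rewrite the left-hand side of the target identity. Since $AB$ and $BA$ have the same characteristic polynomial, Equation~(\ref{eq:binomial}) (which expresses $\D{X[i],I[\M-i]}$ in terms of the coefficients of $\mydet{tI+X}$) gives $\D{AB[i],I[\M-i]} = \D{BA[i],I[\M-i]}$. Writing $BA = B\cdot A$ in the first $i$ slots and $I = B\cdot B^{-1}$ in the remaining $\M-i$ slots, multiplicativity yields $\D{BA[i],I[\M-i]} = \mydet{B}\,\D{A[i],B^{-1}[\M-i]}$. By the same device, writing $B = B\cdot I$ and $I = B\cdot B^{-1}$ gives $\D{B[i],I[\M-i]} = \mydet{B}\,\D{B^{-1}[\M-i],I[i]}$, so that $\D{B^{-1}[\M-i],I[i]} = \mydet{B}^{-1}\D{B[i],I[\M-i]}$; this is the only place invertibility of $B$ is used.

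Then I would just substitute. Applying Lemma~\ref{lem:ff_equality} to $A$ and $B^{-1}$ with $j=i$ and the second index equal to $\M-i$ (so the $I$-slots vanish) gives $\M!\,\D{A[i],B^{-1}[\M-i]} = \D{A[i],I[\M-i]}\,\D{B^{-1}[\M-i],I[i]}$. Multiplying through by $\mydet{B}$ and using the two rewrites above turns this into exactly $\M!\,\D{AB[i],I[\M-i]} = \D{A[i],I[\M-i]}\,\D{B[i],I[\M-i]}$, and Lemma~\ref{lem:ff_mult_equality} then closes the argument.

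The only genuinely non-bookkeeping step — and the one I would be most careful about — is the passage $AB \leftrightarrow BA$: pulling a common left factor $B$ out of $BA$ and $I$ lands precisely on $A$ and $B^{-1}$, the two objects the hypothesis speaks about, whereas pulling $B$ out of $AB$ directly would leave the conjugate $B^{-1}AB$ rather than $A$. (Equivalently, one could appeal to the right-multiplication form of multiplicativity on $AB = A\cdot B$, $I = B^{-1}\cdot B$, but routing through $BA$ keeps everything within the properties already stated.) Note that no invertibility of $A$ is required anywhere.
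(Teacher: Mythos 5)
Your proposal is correct and follows essentially the same route as the paper: apply Lemma~\ref{lem:ff_equality} to the pair $(A, B^{-1})$ with no identity slots, multiply through by $\mydet{B}$ via the multiplicativity of the mixed discriminant, and invoke Lemma~\ref{lem:ff_mult_equality}. Your extra care about the $AB$ versus $BA$ issue (routing through $\D{BA[i],I[\M-i]} = \D{AB[i],I[\M-i]}$, which follows since $AB$ and $BA$ share a characteristic polynomial) fills in a detail the paper's one-line ``multiplying both sides by $\mydet{B}$'' glosses over.
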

\begin{proof}
For $A$ and $B^{-1}$ in finite free position, we have by Lemma~\ref{lem:ff_equality}
\[
\M!~\D{A[i], B^{-1}[\M-i]} 
= \D{A[i], I[\M-i]}\D{B^{-1}[\M-i], I[i]}
\] 
for all $i$.
Multiplying both sides by $\mydet{B}$ gives
\[
\M!~\D{AB[i], I[\M-i]} 
= \D{A[i], I[\M-i]} \D{B[i], I[\M-i]}
\]
which is precisely what is needed by Lemma~\ref{lem:ff_mult_equality}.
\end{proof}

\subsection{Comparison to Voiculescu's freeness}

To compare finite freeness with Voiculescu's version, it is instructive to see what is implied by the latter which is not implied by the former.
For example, if operators $A$ and $B$ are freely independent (in the Voiculescu sense), then 
\begin{enumerate}
\item $\mu_A \boxplus \mu_B = \mu_{A + B}$
\item $\mu_A \boxtimes \mu_B = \mu_{AB}$
\item $f(\mathcal{A})$ and $g(\mathcal{B})$ are freely independent for any functions $f, g$.
\end{enumerate}
The first statement is true for finite freeness (by construction), but the second and third are not true in general.
This should not come as a huge surprise, as the requirements for a matrix to instantiate additive convolution (Lemma~\ref{lem:ff_equality}) and multiplicative convolution (Lemma~\ref{lem:ff_mult_equality}) are different.
Furthermore, different instantiations of finite freeness can give different results when multiplied, as the next example shows:

\begin{example}
Consider 
\[
A = \left(
\begin{array}{ccc}
1 & 0 & 0 \\
0 & 2 & 0 \\
0 & 0 & 3 
\end{array} \right)
\AND
B = \left(
\begin{array}{ccc}
2 & 0 & 1 \\
0 & 2 & 0 \\
1 & 0 & 2 
\end{array} \right), 
\]
both of which have characteristic polynomial $p_A(x) = p_B(x) = x^3 - 6x^2 + 11x - 6$.
On one hand, we have
\[
[p_A \sqsum_{3} p_B](x) 
= x^3 - 12x^2 + 46x - 56
\AND
[p_A \sqmult_{3} p_B](x) 
= x^3 - 12x^2 + \frac{121}{3}x - 36
\]
while on the other hand, we have
\[
\mydet{xI - A - B} 
= x^3 - 12x^2 + 46x - 56
\AND
\mydet{xI - AB} 
= x^3 - 12x^2 + 41x - 36
\]
Hence $A$ and $B$ are in free position, but do not instantiate the multiplicative convolution.
We could also have taken 
\[
B^+ = \left(
\begin{array}{ccc}
2 & \sqrt{2}/2 & 0 \\
\sqrt{2}/2 & 2 & \sqrt{2}/2 \\
0 & \sqrt{2}/2 & 2 
\end{array} \right)
\qquad
\text{or}
\qquad
B^- = \left(
\begin{array}{ccc}
2 & \sqrt{2}/2 & 0 \\
\sqrt{2}/2 & 2 & -\sqrt{2}/2 \\
0 & -\sqrt{2}/2 & 2 
\end{array}\right)
\]
which give
\[
\mydet{xI - A - B^+} = \mydet{xI - A - B^-} = x^3 - 6x^2 + 11x - 6
\]
and
\[
\mydet{xI - AB^+} = \mydet{xI - AB^-} = x^3 - 12x^2 + 40x - 36
\]
\end{example}

On the other hand, there are special cases where the additive and multiplicative convolutions do coincide.
One such situation occurs when one of the matrices is a projection.
The proofs are heavy in computation and are not particularly enlightening, so we have separated them out into Section~\ref{sec:comp}.
The result of these computations, however, leads to the following corollary:
\begin{corollary}
If $A$ and $B$ are $\M \times \M$ real symmetric matrices in finite free position with $A^2 = A$ then 
\[
\mydet{xI - AB} 
= \mydet{xI - A} \sqmult_{\M} \mydet{xI - B}
\]
\end{corollary}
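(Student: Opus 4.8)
The plan is to unwind the corollary through Lemma~\ref{lem:ff_mult_equality}, so that the goal becomes the family of scalar identities
\[
\M!\,\D{AB[i], I[\M-i]} = \D{A[i], I[\M-i]}\,\D{B[i], I[\M-i]}, \qquad i = 0, 1, \dots, \M.
\]
Every quantity here depends only on the characteristic polynomials of $A$, $B$, and $AB$, and simultaneously conjugating $A$ and $B$ by a rotation changes none of these polynomials and preserves finite free position. So I would first normalize: since $A^2 = A$ and $A$ is symmetric, $A$ is an orthogonal projection of rank $r := \Tr{A}$, and after a rotation I may take $A$ to be the diagonal projection onto the coordinate set $R = \{1, \dots, r\}$. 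For a subset $S \subseteq \{1,\dots,\M\}$ write $B_S$ for the principal submatrix of $B$ indexed by $S$, and group the principal minors of $B$ according to how many of their indices lie in $R$: set
\[
\sigma_{i,a} := \sum_{\substack{|S| = i \\ |S \cap R| = a}} \det(B_S), \qquad \text{so that} \qquad \sum_{a} \sigma_{i,a} = e_i(\eigen{B}).
\]

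The next step is to express all four mixed discriminants in terms of the $\sigma_{i,a}$. From the definition of $\D{\cdot}$ one has $\D{X[i], I[\M-i]} = i!\,(\M-i)!\,e_i(\eigen{X})$ for any symmetric $X$, which immediately gives $\D{A[i],I[\M-i]} = i!(\M-i)!\binom{r}{i}$ and $\D{B[i],I[\M-i]} = i!(\M-i)!\sum_a \sigma_{i,a}$; and since $AB$ has characteristic polynomial $x^{\M-r}\det(xI_r - B_R)$, its nonzero eigenvalues being those of the compression of $B$ to $R$, one also gets $\D{AB[i],I[\M-i]} = i!(\M-i)!\,\sigma_{i,i}$. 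The one computation that takes work is the mixed discriminant of $A$, $B$ and $I$: expanding $\det(uA + vB + wI)$ and using that $uA + wI$ is \emph{diagonal}, the multilinear expansion of the determinant over its columns factors through the columns indexed outside $S$, and collecting the coefficient of $u^j v^i w^{\M-i-j}$ yields
\[
\D{A[j], B[i], I[\M-i-j]} = j!\,i!\,(\M-i-j)! \sum_{a} \binom{r-a}{j}\, \sigma_{i,a}.
\]

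Now I would bring in finite free position. Lemma~\ref{lem:ff_equality} gives $\M!\,\D{A[j],B[i],I[\M-i-j]} = \D{A[j],I[\M-j]}\,\D{B[i],I[\M-i]}$ for all $i, j$; substituting the four formulas above and simplifying the factorials (using $\binom{\M}{i}\binom{\M-i}{j} = \binom{\M}{j}\binom{\M-j}{i}$) reduces this, for each fixed $i$, to the linear system
\[
\sum_{a} \binom{r-a}{j}\, \sigma_{i,a} = \binom{r}{j}\binom{\M-j}{i}\,\frac{e_i(\eigen{B})}{\binom{\M}{i}}, \qquad j = 0, 1, \dots, \M-i.
\]
I would then check, using $\binom{r}{a}\binom{r-a}{j} = \binom{r}{j}\binom{r-j}{a}$ followed by Vandermonde's identity, that $\sigma_{i,a} = \binom{r}{a}\binom{\M-r}{i-a}\,e_i(\eigen{B})/\binom{\M}{i}$ solves this system; and since the coefficient matrix with entries $\binom{r-a}{j}$, restricted to the values $j = 0, \dots, m$ where $m+1$ is the number of $a$ for which $\sigma_{i,a}$ can be nonzero (one checks $m \le \M-i$, so these rows are indeed present), is a nonsingular generalized Vandermonde matrix, this solution is unique. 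Reading off the $a = i$ component gives $\sigma_{i,i} = \binom{r}{i}\,e_i(\eigen{B})/\binom{\M}{i}$, and plugging this into $\D{AB[i],I[\M-i]} = i!(\M-i)!\sigma_{i,i}$ and comparing with $\D{A[i],I[\M-i]}\D{B[i],I[\M-i]}$ (noting $\M!/\binom{\M}{i} = i!(\M-i)!$) yields exactly the desired identity, hence the corollary.

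The main obstacle is the explicit evaluation of $\D{A[j],B[i],I[\M-i-j]}$ — the careful bookkeeping of how the statistic $a = |S \cap R|$ enters the multilinear expansion of $\det(uA+vB+wI)$ once $A$ is specialized to a projection — together with the subsidiary point that the resulting linear system really does contain enough equations to pin $\sigma_{i,i}$ down uniquely. Once those are in hand, everything else is routine manipulation of binomial coefficients.
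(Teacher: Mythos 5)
Your proof is correct, but it takes a genuinely different route from the paper's. The paper proves a standalone combinatorial statement (its Lemma on the quantities $f(i,j,k) = \D{A[i],B[j],AB[k],I[\M-i-j-k]}$): taking $A$ diagonal with $\Tr{A}=a$, it uses the trace identity $\Tr{A}\D{X_1,\dots,X_\M} = \sum_i \D{X_1,\dots,AX_i,\dots,X_\M}$ together with $A^2=A$ to derive a three-term recursion in $k$, solves that recursion by induction to write $f(i,j,k)$ as a binomial-weighted sum of the $f(i+t,j+k,0)$, substitutes the finite-free-position hypothesis to evaluate those, and collapses the sum with a Vandermonde-type binomial identity; the case $i=j=0$ then feeds into Lemma~\ref{lem:ff_mult_equality} exactly as in your last step. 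You instead never touch $\D{\cdot}$'s recursive structure: you exploit the projection directly by grouping the principal minors of $B$ according to $a = |S\cap R|$, observe that the free-position identities of Lemma~\ref{lem:ff_equality} become, for each fixed $i$, a generalized-Vandermonde linear system in the $\sigma_{i,a}$, and solve it uniquely. I checked the key computations: the expansion $\D{A[j],B[i],I[\M-i-j]} = j!\,i!\,(\M-i-j)!\sum_a \binom{r-a}{j}\sigma_{i,a}$, the block-triangular identity $\mydet{xI-AB} = x^{\M-r}\det(xI_r - B_R)$ giving $e_i(\eigen{AB}) = \sigma_{i,i}$, the Vandermonde verification of the solution, and the counting argument that the number of admissible $a$ never exceeds $\M-i+1$ in any of the four cases for $(i,r)$ — all are sound. (You are also using the normalization $\D{X[i],I[\M-i]} = i!(\M-i)!\,e_i(\eigen{X})$, which is the one consistent with $\D{A,\dots,A}=\M!\mydet{A}$ and with Lemmas~\ref{lem:ff_equality} and \ref{lem:ff_mult_equality}; the paper's Equation~(\ref{eq:binomial}) is off by a factor of $\M!$ from this, an internal inconsistency of the paper rather than of your argument.) What each approach buys: the paper's recursion yields the stronger factorization $\M!\,f(i,j,k) = f(i+k,0,0)f(0,j+k,0)$ for all $i,j,k$, while yours yields the arguably more illuminating structural fact that finite free position with a rank-$r$ projection forces the principal minors of $B$ to be distributed hypergeometrically across the two eigenspaces, $\sigma_{i,a} = \binom{r}{a}\binom{\M-r}{i-a}e_i(\eigen{B})/\binom{\M}{i}$ — precisely the expected value under a Haar-random rotation, which is a pleasant consistency check with the definition of $\sqmult_{\M}$.
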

\begin{proof}
The fact that $A$ and $B$ are in finite free position and $A^2 = A$ are exactly the hypotheses in Lemma~\ref{lem:a2}.
Hence we have
\[
\M!~\D{A[i], B[j], AB[k], I[\M-i-j-k]} 
= \D{ A[i+k], I[\M-i-k]} \D{B[j+k], I[\M-j-k]}.
\]
for all $i, j, k$ with $i + j + k \leq \M$.
The case $i = j = 0$, in particular, gives
\[
\M!~\D{AB[k], I[\M-k]} 
= \D{ A[k], I[\M-k]} \D{B[k], I[\M-k]}
\]
for all $k$, which by Lemma~\ref{lem:ff_mult_equality} suffices to prove the lemma.
\end{proof}

\subsection{Application: Majorization}\label{sec:majorization}

\newcommand{\addc}[2]{\Delta_{#2}(#1)}
\newcommand{\mytr}[1]{\mathrm{Tr}\left[ #1 \right]}
\newcommand{\trans}{T}

The fact that one can find matrices in finite free position is far more useful 
than simply for the ability to ``instantiate'' the operation with fixed 
matrices.
In particular, we will use the characterization in Lemma~\ref{lem:ff_position2} 
to show that the roots of certain convolutions satisfy a {\em majorization} 
relation. 
Such a relation can be useful due to the large number of inequalities that it 
implies (see \cite{olkin}).
A collection $x_1 \geq \dots \geq x_n$ of real numbers is said to 
{\em majorize} the collection $y_1 \geq \dots \geq y_n$ if the inequality
\[
\sum_{i = 1}^k x_i \geq \sum_{i=1}^k y_i
\]
for all $k \leq n$ and (furthermore) holds with equality when $k = 
n$.
Perhaps the most well-known majorization relation (first noticed by Schur) 
occurs between the eigenvalues of a Hermitian matrix and the diagonal entries 
of that matrix.
Hence Lemma~\ref{lem:ff_position2} implies a similar relation between the 
roots of a polynomial and an additive convolution:
\begin{corollary}\label{cor:easy_majorization}
Let $p, q$ be a real rooted, degree $d$ matrices for which the sum of the roots 
of $q$ is $0$.
Then the roots of $[p \boxplus_d q]$ majorize the roots of $p$,
\end{corollary}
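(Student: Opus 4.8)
The plan is to realize everything with matrices and then invoke Schur's majorization theorem (eigenvalues of a Hermitian matrix majorize its diagonal), using Lemma~\ref{lem:ff_position2} to arrange the diagonal favorably. First I would let $A$ be the \emph{diagonal} $d \times d$ matrix whose diagonal entries are the roots of $p$ (listed in any order), so that $\mydet{xI - A} = p(x)$, and let $B_0$ be any real symmetric matrix with $\mydet{xI - B_0} = q(x)$; since the sum of the roots of $q$ is $0$, we have $\tr{B_0} = 0$.

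Next I would apply Lemma~\ref{lem:ff_position2} to the pair $(A, B_0)$ (note that $A$ is diagonal, as the hypothesis of that lemma requires). This yields a rotation $R$ such that $A$ and $B := R^T B_0 R$ are in finite free position \textbf{and} the diagonal of $B$ is constant; since $\tr{B} = \tr{B_0} = 0$, that constant must be $0$, i.e. every diagonal entry of $B$ is zero. By the definition of finite free position (with $y = z = 1$),
\[
[p \boxplus_d q](x) = \mydet{xI - A} \sqsum_d \mydet{xI - B} = \mydet{xI - A - B},
\]
so the roots of $[p \boxplus_d q]$ are exactly the eigenvalues of the real symmetric matrix $A + B$. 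Meanwhile the diagonal entries of $A + B$ equal the diagonal entries of $A$ (because $B$ has zero diagonal), which are precisely the roots of $p$.

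Finally I would invoke Schur's theorem: the eigenvalues of a Hermitian matrix majorize its diagonal entries. Applied to $A + B$, this says the roots of $[p \boxplus_d q]$ majorize the roots of $p$, including the equality at $k = d$ (which is just the statement $\Tr{A+B} = \Tr{A}$, equivalently that $\boxplus_d$ preserves the sum of the roots, consistent with $\tr{B} = 0$). I do not expect a genuine obstacle here — the content is all packaged in Lemma~\ref{lem:ff_position2} and in Theorem~\ref{thm:HV} behind it; the only points requiring a little care are choosing the \emph{diagonal} realization of $p$ so that Lemma~\ref{lem:ff_position2} applies, and observing that the constant diagonal of $B$ is forced to be $0$ by the trace-zero hypothesis on $q$.
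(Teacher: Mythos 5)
Your argument is exactly the one the paper intends: realize $p$ as a diagonal matrix $A$, use Lemma~\ref{lem:ff_position2} to place a trace-zero realization $B$ of $q$ in finite free position with $A$ and with zero diagonal, identify $[p \boxplus_d q]$ with $\mydet{xI - A - B}$, and apply Schur's majorization of eigenvalues over diagonal entries. This matches the paper's (implicit) proof, so there is nothing to add.
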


The remainder of this section will be dedicated to showing a generalization of 
Corollary~\ref{cor:easy_majorization}. 
Given an $m \times n$ matrix $A$, the {\em $k$th compound matrix of $A$} is the 
$\binom{m}{k} \times \binom{n}{k}$ matrix $C_k(M)$ consisting of the $k \times 
k$ minors of $A$.
That is, for $J \subseteq \binom{[m]}{k}$ and $K \subseteq \binom{[n]}{k}$, we 
have
\[
C_k(A)_{J, K} = \mydet{ A_{[J, K]} }
\]
where $A_{[J, K]}$ denotes the submatrix of $A$ consisting of rows in $J$ and 
columns in $K$.
One can then define the {\em $k$th additive compound matrix of $A$} as
\[
\addc{A}{k} = \frac{\partial}{\partial t} C_k(I + t A) ~\bigg|_{t=0}.
\]

The following properties of additive compound matrices are well known (see 
\cite{fiedler}, for example):
\begin{lemma}\label{lem:additive_compound}~
\begin{enumerate}
\item If $A$ and $B$ are $m \times n$ matrices, then 
\[
\addc{A}{k} + \addc{B}{k} = \addc{A+B}{k}.
\]
\item If $A$ is a $\M \times \M$ diagonal matrix, then $\addc{A}{k}$ is a 
diagonal matrix.
\item If $A$ is a $\M \times \M$ Hermitian matrix, then $\addc{A}{k}$ is a 
Hermitian matrix.
\item $\addc{A}{k}_{J, J} = \mytr{A_{[J, J]}}$ for all $J \in \binom{[\M]}{k}$
\item If $A$ is a $\M\times \M$ matrix with eigenvalues $\lambda_1, \dots, 
\lambda_m$.
Then the eigenvalues of $\addc{A}{k}$ are
\[
\left\{ \sum_{i \in S} \lambda_i : S \in \binom{[\M]}{k} \right\}.
\]
\end{enumerate}
\end{lemma}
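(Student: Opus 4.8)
The strategy is to derive all five parts from two standard facts about compound matrices: multiplicativity, $C_k(XY) = C_k(X)C_k(Y)$ together with $C_k(I) = I$ (Cauchy--Binet), and the first-order Taylor expansion in $t$ of an individual $k \times k$ minor of $I + tA$. For the latter, fix $J, K \in \binom{[\M]}{k}$ and expand $\mydet{(I + tA)_{[J,K]}}$ multilinearly in its columns, each of which has the form $e_c + t\,a_c$, where $e_c$ (resp.\ $a_c$) is the restriction to the rows in $J$ of the $c$th standard basis vector (resp.\ the $c$th column of $A$). The $t^0$ term is $\mydet{I_{[J,K]}} = \delta_{J,K}$, and the $t^1$ term is a sum of determinants each of which is linear in the entries of $A$; hence $A \mapsto \addc{A}{k}$ is $\R$-linear, which is property~1 (for square $A$). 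Setting $J = K$ gives $\mydet{(I+tA)_{[J,J]}} = \mydet{I_k + t\,A_{[J,J]}} = 1 + t\,\mytr{A_{[J,J]}} + O(t^2)$, which is property~4. If in addition $A$ is diagonal, then in the $t^1$ term the restricted column $a_c$ is $0$ when $c \notin J$ and equals $a_{cc}e_c$ when $c \in J$, so the $t^1$ coefficient collapses to $\delta_{J,K}\sum_{i \in J}a_{ii}$; thus $\addc{A}{k}$ is diagonal, which is property~2.

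For property~3 I would avoid any sign bookkeeping by using that $C_k$ commutes with transposition and with complex conjugation: $C_k(X^T) = C_k(X)^T$, since $\mydet{(X^T)_{[J,K]}} = \mydet{(X_{[K,J]})^T} = \mydet{X_{[K,J]}}$, and $C_k(\overline X) = \overline{C_k(X)}$, since determinants commute with conjugation. Hence for Hermitian $A$ and real $t$ we have $(I + tA)^* = I + tA$, so $C_k(I + tA)^* = C_k\big((I+tA)^*\big) = C_k(I+tA)$; thus $C_k(I + tA)$ is Hermitian for every real $t$, and therefore so is $\addc{A}{k}$, its derivative at $t = 0$.

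Property~5 carries the real content, and here I would combine multiplicativity with diagonalization. If $A = P D P^{-1}$ with $D = \mathrm{diag}(\lambda_1, \dots, \lambda_{\M})$, then $I + tA = P(I + tD)P^{-1}$, so $C_k(I + tA) = C_k(P)\,C_k(I + tD)\,C_k(P)^{-1}$ (using $C_k(P^{-1}) = C_k(P)^{-1}$, itself a consequence of multiplicativity); differentiating at $t = 0$ gives $\addc{A}{k} = C_k(P)\,\addc{D}{k}\,C_k(P)^{-1}$. By property~2, $\addc{D}{k}$ is diagonal, and by property~4 its $(J,J)$ entry is $\mytr{D_{[J,J]}} = \sum_{i \in J}\lambda_i$; hence the eigenvalues of $\addc{A}{k}$ are exactly $\big\{ \sum_{i \in S}\lambda_i : S \in \binom{[\M]}{k} \big\}$. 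For the real symmetric matrices used throughout this paper $A$ is orthogonally diagonalizable, so this argument applies verbatim; a general $\M \times \M$ matrix is handled either by replacing $(P,D)$ with $(Q,T)$ from a Schur decomposition (the compound of a triangular matrix being triangular, in a suitable order on $\binom{[\M]}{k}$, with diagonal entries the relevant products) or, more cheaply, by density of diagonalizable matrices together with continuity of eigenvalues. I expect the only genuine friction in the whole argument to be this last reduction of the fully general property~5 to the diagonalizable case, everything else being routine once the two standard ingredients are in place; a secondary nuisance would be tracking signs if one insisted on writing the off-diagonal entries of $\addc{A}{k}$ explicitly, which is precisely why property~3 is cleanest via the conjugate--transpose symmetry rather than by direct computation.
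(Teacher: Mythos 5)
Your proof is correct and complete. Note that the paper does not actually prove this lemma --- it simply cites the literature (Fiedler) for these ``well known'' facts --- so there is no in-paper argument to compare against; what you have supplied is a self-contained derivation from the two standard ingredients one would expect (multilinearity of the determinant for the first-order expansion of $C_k(I+tA)$, and Cauchy--Binet multiplicativity for the spectral statement), and each step checks out. Two minor remarks. First, part~1 as stated in the paper for $m \times n$ matrices is not literally compatible with the definition $\addc{A}{k} = \frac{\partial}{\partial t}C_k(I+tA)\big|_{t=0}$, which requires $A$ square; your restriction to the square case is the right reading, and it is the only case the paper uses. Second, your handling of part~5 in full generality (Schur decomposition, with $C_k$ of an upper triangular matrix being upper triangular in the lexicographic order on $\binom{[\M]}{k}$, or alternatively density of diagonalizable matrices plus continuity of $A \mapsto \addc{A}{k}$ and of the eigenvalue multiset) is sound, though for the application in Theorem~\ref{thm:majorization} only the real symmetric case is needed, where orthogonal diagonalization makes the argument immediate.
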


The utility of the additive compound matrices (at least as it applies to 
majorization) can be seen in the last property listed in 
Lemma~\ref{lem:additive_compound}: it allows one to turn statements regarding 
the sum of the $k$ largest eigenvalues of a matrix $A$ into a statement 
regarding the (single) largest eigenvalue of $\addc{A}{k}$.
The statement regarding the (single) largest eigenvalue that will be of 
interest to us is proved in the next lemma:

\begin{lemma}\label{lem:increaset}
Let $A$ be a $\M\times \M$ real, diagonal matrix and let $B$ be a $\M \times 
\M$ real symmetric matrix with $0$ diagonal.
Then the function 
\[
f(t) = \lambda_{\max}(A + t B).
\]
is increasing for $t \geq 0$.
\end{lemma}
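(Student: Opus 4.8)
The plan is to exploit the fact that $f$ is a convex function of $t$ and to show that it attains its global minimum (over all of $\mathbb{R}$) at $t=0$; a convex function whose minimum is attained at $0$ is automatically non-decreasing on $[0,\infty)$, which is exactly the asserted monotonicity.

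First I would record the convexity. Writing $\lambda_{\max}(M)=\sup_{\|v\|=1} v^{\top} M v$ exhibits $\lambda_{\max}$ as a supremum of linear functionals on the space of real symmetric matrices, hence as a convex function; precomposing with the affine map $t\mapsto A+tB$ shows that $f(t)=\lambda_{\max}(A+tB)$ is convex on all of $\mathbb{R}$. Next I would establish the pointwise lower bound $f(t)\ge \lambda_{\max}(A)=f(0)$ for every $t\in\mathbb{R}$, and this is where both hypotheses are used. Since $A$ is diagonal with entries $a_1,\dots,a_{\M}$, choose a coordinate $i$ with $a_i=\max_j a_j=\lambda_{\max}(A)$, and test the Rayleigh quotient of $A+tB$ against the standard basis vector $e_i$:
\[
f(t)=\lambda_{\max}(A+tB)\ \ge\ e_i^{\top}(A+tB)e_i\ =\ a_i + t\,B_{ii}\ =\ a_i\ =\ \lambda_{\max}(A),
\]
where the cancellation of the linear term uses precisely that $B$ has zero diagonal, so $B_{ii}=0$. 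Thus $0$ is a global minimizer of the convex function $f$.

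Finally I would combine the two facts: for $0\le s<t$, write $s=(1-\tfrac{s}{t})\cdot 0+\tfrac{s}{t}\cdot t$ and apply convexity together with $f(0)\le f(t)$ to conclude $f(s)\le (1-\tfrac{s}{t})f(0)+\tfrac{s}{t}f(t)\le f(t)$; for $s=0$ the inequality $f(0)\le f(t)$ is immediate. This gives $f(s)\le f(t)$ whenever $0\le s\le t$, which is the claim.

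I do not expect a genuine obstacle here. The only point worth flagging is to resist differentiating $\lambda_{\max}$ directly, since first-order eigenvalue perturbation is delicate exactly when $\lambda_{\max}(A)$ is a repeated eigenvalue of the diagonal matrix $A$; in that degenerate case $f$ can in fact be constant on an initial segment of $[0,\infty)$ (for instance with $A=\operatorname{diag}(1,0,-1)$ and $B$ the matrix with $B_{23}=B_{32}=1$ and all other entries $0$), so ``increasing'' must be read as ``non-decreasing.'' The coordinate-vector test above avoids perturbation theory entirely and handles the degenerate case automatically.
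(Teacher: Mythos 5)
Your proof is correct and rests on the same mechanism as the paper's: the variational characterization $\lambda_{\max}(M)=\sup_{\|v\|=1}v^{\top}Mv$ together with the observation that testing at a standard basis vector kills the contribution of $B$ because its diagonal vanishes. The paper unrolls this as monotonicity of the quantity $t\mapsto v_t^{\top}Bv_t$ starting from $v_0^{\top}Bv_0=0$, whereas you package the identical content as convexity plus a global minimum at $t=0$; your version has the minor advantage of not needing to \emph{choose} $v_0$ to be a coordinate vector when the top eigenvalue of $A$ is degenerate, and you correctly flag that ``increasing'' must be read as non-decreasing.
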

\begin{proof}
For each value of $t$, let $v_t$ be a maximal eigenvector, so that 
\[
f(t) 
= v_t^\trans (A + t B) v_t 
= \max_{\|v\| = 1 } v^\trans (A + t B) v.
\]
Then for $s \neq t$, we have
\[
f(s) 
\geq  v_t^\trans (A + s B) v_t = \max_{\|v\| = 1 } v^\trans (A + t B) v.
= f(t) + (s-t) v_t^T B v_t^T
\]
and so for $s > t$ we have
\[
v_t^\trans B v_t \leq \frac{f(s) - f(t)}{s-t} \leq v_s^\trans B v_s.
\]
where it is easy to check that $v_0^\trans B v_0 = 0$ (since $v_0$ is an 
elementary basis vector).
Hence for $s > t$, we have
\[
0 \leq v_t^\trans B v_t \leq \frac{f(s) - f(t)}{s-t}
\]
and so $f(t)$ is increasing.
\end{proof}

We are now in position to show the generalized result:

\begin{theorem}\label{thm:majorization}
Let $p, q$ be degree $\M$ polynomials with the sum of the roots of $q$ being 
$0$.
For $t \geq 0$, define the polynomials
\[
r_t(x) = [ p(x) \boxplus_\M t^\M q(x/t) ].
\] 
Then the roots of $r_t(x)$ majorize the roots of $r_s(x)$ if and only if $t 
\geq s$.
\end{theorem}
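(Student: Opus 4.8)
The plan is to realize $r_t(x)$ as the characteristic polynomial of an explicit one-parameter family $A + tB'$, reduce both directions of the equivalence to a monotonicity statement about the partial sums of eigenvalues of this family, and handle those partial sums by passing to additive compound matrices. First I would take $A$ to be the diagonal matrix whose entries are the roots of $p$ and $B$ any real symmetric matrix with eigenvalues the roots of $q$, and apply Lemma~\ref{lem:ff_position2} to the pair $(A,B)$: it yields a rotation $R$ such that $A$ and $B' := R^{\trans}BR$ are in finite free position and the diagonal of $B'$ is constant; since $\mytr{B'} = \mytr{B}$ equals the sum of the roots of $q$, which is $0$, that constant diagonal is identically $0$. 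By Corollary~\ref{cor:ff_translate}, $A$ and $tB'$ are in finite free position for every $t$, and $\mydet{xI - tB'} = t^{\M}q(x/t)$ (the eigenvalues of $tB'$ are $t$ times the roots of $q$), so for all $t \geq 0$
\[
r_t(x) = [\,\mydet{xI - A} \boxplus_{\M} \mydet{xI - tB'}\,] = \mydet{xI - A - tB'},
\]
and in particular the roots of $r_t$ are exactly the eigenvalues $\lambda_1(A+tB') \geq \dots \geq \lambda_{\M}(A+tB')$ (with $r_0 = p$, consistent with Equation~(\ref{eq:add_conv})). Writing $\sigma_k(t) := \sum_{i=1}^{k}\lambda_i(A+tB')$, the assertion ``the roots of $r_t$ majorize the roots of $r_s$'' is precisely ``$\sigma_k(t) \geq \sigma_k(s)$ for $1 \leq k \leq \M$, with equality at $k = \M$.''

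For the ``if'' direction I would note first that $\sigma_{\M}(t) = \mytr{A + tB'} = \mytr{A}$ is independent of $t$, so the $k = \M$ equality always holds. For $k < \M$, consider $\addc{A+tB'}{k}$. By Lemma~\ref{lem:additive_compound}, part (1) together with the evident homogeneity $\Delta_k(cX) = c\,\Delta_k(X)$, this equals $\addc{A}{k} + t\,\addc{B'}{k}$; by part (2) $\addc{A}{k}$ is real and diagonal; by part (3) $\addc{B'}{k}$ is real symmetric; and by part (4) each diagonal entry $\addc{B'}{k}_{J,J} = \mytr{B'_{[J,J]}} = 0$ since $B'$ has zero diagonal. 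By part (5), $\lambda_{\max}\!\left(\addc{A+tB'}{k}\right) = \sigma_k(t)$. Thus $\sigma_k(t) = \lambda_{\max}\!\left(\addc{A}{k} + t\,\addc{B'}{k}\right)$ with $\addc{A}{k}$ real diagonal and $\addc{B'}{k}$ real symmetric with zero diagonal, so Lemma~\ref{lem:increaset} shows $\sigma_k$ is nondecreasing on $[0,\infty)$. Hence $t \geq s \geq 0$ implies $\sigma_k(t) \geq \sigma_k(s)$ for all $k$ with equality at $k = \M$, i.e. the roots of $r_t$ majorize those of $r_s$.

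For the converse, suppose the roots of $r_t$ majorize those of $r_s$ but $s > t \geq 0$. Applying the ``if'' direction with the roles of $s$ and $t$ exchanged, the roots of $r_s$ also majorize those of $r_t$, so $\sigma_k(t) = \sigma_k(s)$ for every $k$ and therefore $r_t = r_s$ as polynomials. But the coefficient of $x^{\M-2}$ in $\mydet{xI - A - tB'}$ is the second elementary symmetric function of its roots, equal to $\tfrac{1}{2}\big((\mytr{A})^2 - \mytr{A^2} - t^2\mytr{(B')^2}\big)$, using $\mytr{AB'} = 0$ (which holds because $A$ is diagonal and $B'$ has zero diagonal). Since $\mytr{(B')^2} = \sum_i \beta_i^2$ with $\beta_i$ the roots of $q$, this is strictly decreasing in $t \geq 0$ unless all $\beta_i = 0$; as the $\beta_i$ sum to $0$, that exceptional case is $q(x) = x^{\M}$, in which $r_t = p$ for every $t$ (a degenerate situation in which the ``only if'' cannot hold and which we tacitly exclude). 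Otherwise $r_t \neq r_s$, a contradiction, so $t \geq s$.

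The hard part here is the instantiation step: to run the compound-matrix argument one needs a matrix model of $r_t$ in which the perturbation direction $B'$ has \emph{zero diagonal}, and it is precisely Lemma~\ref{lem:ff_position2}, combined with the hypothesis that the roots of $q$ sum to $0$, that supplies this. The zero-diagonal property is exactly what survives the passage to additive compound matrices (part (4) of Lemma~\ref{lem:additive_compound}) and is exactly the hypothesis needed to invoke Lemma~\ref{lem:increaset}; the remaining steps, including the $x^{\M-2}$-coefficient computation in the converse, are routine.
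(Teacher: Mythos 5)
Your proof of the ``if'' direction is essentially identical to the paper's: instantiate $r_t(x)=\mydet{xI-A-tB'}$ via Lemma~\ref{lem:ff_position2} (with the zero-sum hypothesis on the roots of $q$ forcing the constant diagonal of $B'$ to be zero), pass to additive compound matrices, and invoke Lemma~\ref{lem:increaset}. The one substantive difference is that you also prove the ``only if'' direction, which the paper's proof silently omits even though the theorem is stated as an equivalence: your argument (mutual majorization forces $r_t=r_s$, and the $x^{\M-2}$ coefficient is strictly decreasing in $t$ unless $\Tr{(B')^2}=0$) is correct, and you rightly flag that the converse genuinely fails in the degenerate case $q(x)=x^{\M}$, where $r_t=p$ for all $t$; that exclusion is needed for the theorem as literally stated, so your treatment is in fact more complete than the paper's.
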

\begin{proof}
By Lemma~\ref{lem:ff_position2}, there exist real symmetric matrices $A, B$ 
with $A$ a diagonal matrix and $B$ with $0$ diagonal for which 
\[
p(x) = \mydet{x I - A}
\AND
q(x) = \mydet{x I - B}
\AND
r_t(x) = \mydet{x I - A - t B}.
\]
Note that by Lemma~\ref{lem:additive_compound}, we have that 
\begin{itemize}
\item $\addc{A}{k}$ is a diagonal matrix for all $k$
\item $\addc{B}{k}$ has $0$ diagonal for all $k$
\end{itemize}
and so by Lemma~\ref{lem:increaset}, we have that for $t \geq s$
\[
\lambda_{\max}(  \addc{A + t B}{k} ) \geq \lambda_{\max}(  \addc{A + s B}{k} ). 
\]
In other words, the sum of the largest $k$ roots of $r_t(x)$ is at least as 
large as the sum of the largest $k$ roots of $r_s(x)$.
It is easy to check that this holds with equality when $k = n$, and so the 
roots of $r_t(x)$ majorize the roots of $r_s(x)$ (by definition).
\end{proof}

Note that $r_0(x) = p(x)$ so Theorem~\ref{thm:majorization} contains 
Corollary~\ref{cor:easy_majorization} as a special case.
It is easy to see, however, that one cannot derive 
Theorem~\ref{thm:majorization} from Corollary~\ref{cor:easy_majorization} as 
the next example shows:

\begin{example}
Consider the polynomial $q(x) = x^4 - 12 x^2$ so that for any real rooted $p$, 
we have
\[
[ p(x) \boxplus_4 q(x) ] = p(x) - p''(x)
\AND
[ p(x) \boxplus_4 2^4 q(x/2) ] = p(x) - 4 p''(x)
\]
and so Theorem~\ref{thm:majorization} implies that the roots of $p(x) - p''(x)$ 
majorize the roots of $p(x) - 4 p''(x)$.
In order to derive a similar result directly from 
Corollary~\ref{cor:easy_majorization}, we would need a polynomial $r(x)$ for 
which
\[ 
[ r(x) \boxplus_4 q(x) ] = 2^\M q(x/2)
\] 
and one can check that the only such polynomial is
\[
r(x) = x^4 - 36 x^2 - 72
\]
which is not real rooted (so Corollary~\ref{cor:easy_majorization} would not 
apply).
\end{example}

%It is also worth noting that the conclusion of 
%Corollary~\ref{cor:easy_majorization} can be achieved without having $A$ a 
%diagonal matrix.
%Take, for example, $A = 

\subsection{Computations}\label{sec:comp}

The goal of this section is to prove Lemma~\ref{lem:a2} below.
We first prove a combinatorial identity that is an easy consequence of the extension to the binomial theorem:
\[
(x+1)^{-k} 
= \sum_i \binom{-k}{i}x^i 
= \sum_i (-1)^i \binom{k+i-1}{i}.
\]
\begin{lemma}\label{lem:binomial}
Let $j,k, n$ be nonnegative integers such that $j+k \leq n$.
Then
\[
\sum_{t=0}^k (-1)^{t} \binom{k}{t}\binom{n-t}{n-j-k-t} 
= \binom{n-k}{n-j-k}
\]
\end{lemma}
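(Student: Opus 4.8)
The plan is to prove the identity $\sum_{t=0}^k (-1)^{t} \binom{k}{t}\binom{n-t}{n-j-k-t} = \binom{n-k}{n-j-k}$ by the method of generating functions, using the hint provided just before the statement — namely the negative binomial expansion $(1+x)^{-a} = \sum_i (-1)^i \binom{a+i-1}{i} x^i$. First I would rewrite the summand in a form amenable to coefficient extraction: note that $\binom{n-t}{n-j-k-t} = \binom{n-t}{j}$ (the two lower indices sum to $n-t$), so the identity becomes $\sum_{t=0}^k (-1)^t \binom{k}{t} \binom{n-t}{j} = \binom{n-k}{j}$, which is a cleaner target. Here $\binom{n-t}{j}$ is the coefficient of $x^j$ in $(1+x)^{n-t}$, or equivalently — and this is the useful direction — $\binom{n-t}{j}$ equals the coefficient of $x^{n-t-j}$ in $(1+x)^{n-t}$; but the slickest route is to view $\binom{n-t}{j}$ as $[x^j](1+x)^{n-t} = [x^j](1+x)^n (1+x)^{-t}$.

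Then I would interchange the sum over $t$ with the coefficient extraction:
\[
\sum_{t=0}^k (-1)^t \binom{k}{t} \binom{n-t}{j}
= [x^j]\, (1+x)^n \sum_{t=0}^k \binom{k}{t} \left( \frac{-1}{1+x} \right)^{t}
= [x^j]\, (1+x)^n \left( 1 - \frac{1}{1+x} \right)^{k},
\]
where the last step is the ordinary binomial theorem applied to the finite sum over $t$. Simplifying $1 - \frac{1}{1+x} = \frac{x}{1+x}$ gives $(1+x)^n \cdot \frac{x^k}{(1+x)^k} = x^k (1+x)^{n-k}$, so the whole expression is $[x^j]\, x^k (1+x)^{n-k} = [x^{j-k}](1+x)^{n-k}$. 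Wait — this would give $\binom{n-k}{j-k}$, not $\binom{n-k}{j}$, so I must be more careful about which coefficient represents $\binom{n-t}{j}$: I should instead use $\binom{n-t}{j} = \binom{n-t}{n-t-j} = [x^{n-t-j}](1+x)^{n-t}$. Redoing the manipulation with this choice, $\sum_t (-1)^t \binom{k}{t}[x^{n-t-j}](1+x)^{n-t}$; since the power of $x$ being extracted shifts with $t$, I would instead write $[x^{n-j}]$ of $\sum_t (-1)^t \binom{k}{t} x^t (1+x)^{n-t} = [x^{n-j}](1+x)^n (x/(1+x))^k \cdot$ hmm, $\sum_t \binom{k}{t}(-x)^t(1+x)^{-t} \cdot (1+x)^n = (1+x)^n(1 - x/(1+x))^k = (1+x)^n (1+x)^{-k} = (1+x)^{n-k}$, giving $[x^{n-j}](1+x)^{n-k} = \binom{n-k}{n-j-k}$, exactly the right-hand side (and nonnegativity of $n-j-k$ is where the hypothesis $j+k\le n$ is used).

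The main obstacle is purely bookkeeping: getting the coefficient-extraction index right so that the $t$-dependence cancels cleanly after pulling the sum inside, and keeping track of the two equivalent representations $\binom{n-t}{j} = \binom{n-t}{n-t-j}$. There are no convergence issues since everything is a finite sum of polynomials. An alternative, if one prefers to avoid generating functions, is a direct double-counting or a Vandermonde/Chu–Vandermonde argument, or induction on $k$ using Pascal's rule $\binom{k}{t} = \binom{k-1}{t} + \binom{k-1}{t-1}$ together with $\binom{n-t}{j} - \binom{n-1-t}{j} = \binom{n-1-t}{j-1}$ (telescoping); but the generating-function proof above is the shortest and matches the hint the authors have already supplied, so that is the route I would write up.
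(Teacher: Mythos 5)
Your overall strategy --- extract a coefficient from a generating function after collapsing the sum over $t$ with the binomial theorem --- is sound and is essentially the same coefficient-comparison idea as the paper's proof (which compares the coefficient of $x^{n-j-k}$ in the two expansions of $(x+1)^k(x+1)^{-j-k-1}=(x+1)^{-j-1}$). But as written your argument contains two false steps. First, the opening reduction is wrong: $\binom{n-t}{n-j-k-t}\neq\binom{n-t}{j}$ in general, because the two lower indices $(n-j-k-t)$ and $j$ sum to $n-k-t$, not $n-t$; the correct complement is $\binom{n-t}{n-j-k-t}=\binom{n-t}{j+k}$. So your ``cleaner target'' $\sum_t(-1)^t\binom{k}{t}\binom{n-t}{j}=\binom{n-k}{j}$ is not equivalent to the lemma, and is in fact false (try $n=3$, $j=k=1$: the left side is $3-2=1$ while $\binom{2}{1}=2$). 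Second, your final equality $[x^{n-j}](1+x)^{n-k}=\binom{n-k}{n-j-k}$ is also false; that coefficient is $\binom{n-k}{n-j}=\binom{n-k}{j-k}$. What your computation actually establishes, correctly, is the different identity $\sum_{t=0}^k(-1)^t\binom{k}{t}\binom{n-t}{j}=\binom{n-k}{j-k}$ --- consistent with the answer you obtained in your first pass and then talked yourself out of. The two errors do not cancel: the quantity you compute, $\binom{n-k}{j-k}$, differs from the lemma's right-hand side $\binom{n-k}{n-j-k}=\binom{n-k}{j}$.

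The repair is easy and makes the proof shorter than what you wrote: do not take any complement at all. Write
\[
\binom{n-t}{n-j-k-t}=[x^{\,n-j-k-t}](1+x)^{n-t}=[x^{\,n-j-k}]\,x^{t}(1+x)^{n-t},
\]
so that
\[
\sum_{t=0}^{k}(-1)^{t}\binom{k}{t}\binom{n-t}{n-j-k-t}
=[x^{\,n-j-k}]\,(1+x)^{n}\sum_{t=0}^{k}\binom{k}{t}\left(\frac{-x}{1+x}\right)^{t}
=[x^{\,n-j-k}]\,(1+x)^{n-k}
=\binom{n-k}{n-j-k},
\]
using $1-\tfrac{x}{1+x}=\tfrac{1}{1+x}$. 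With this correction your route works entirely with polynomial expansions and avoids the negative binomial series that the paper uses, which is a genuine (minor) simplification; but the version you submitted does not prove the stated lemma.
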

\begin{proof}
We expand 
\[
(x+1)^k 
= \sum_{i} x^i \binom{k}{i}
\]
and 
\[
(x+1)^{-j-k-1} 
= \sum_{i} (-1)^i x^i \binom{j+k+i}{i}.
\]
Hence the coefficient of $x^{n-j-k}$ in the product is
\[
\sum_{t} \binom{k}{t} (-1)^{n-j-k-t} \binom{n-t}{n-j-k-t}.
\]
On the other hand, we can multiply first and then expand to get
\[
(x+1)^{-j-1} 
= \sum_i (-1)^i x^i \binom{j+i-1}{i}
\]
so that the coefficient of $x^{n-j-k}$ is 
\[
(-1)^{n-j-k} \binom{n-k}{n-j-k}.
\]
The lemma then follows by equating the two representations for the same coefficient.
\end{proof}

We now have the tools to prove the lemma.
For $\M  \times \M$ matrices, we define the quantity 
\[
f(i, j, k) 
= \D{A[i], B[j], AB[k], I[\M-i-j-k]}
\]
\begin{lemma}\label{lem:a2}
Let $A$ and $B$ be $\M \times \M$ matrices that satisfy the identities
\begin{equation}\label{eq:relation}
\M!~f(i, j, 0) 
= f(i, 0, 0) f(0, j, 0)
\end{equation}
for all $i, j$ and such that $A^2 = A$.
Then  
\begin{equation}\label{eq:to_prove}
\M!~f(i, j, k) 
= f(i+k, 0) f(0, j+k, 0)
\end{equation}
\end{lemma}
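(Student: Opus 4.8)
The plan is to prove the identity \eqref{eq:to_prove} by induction on $k$, using the hypothesis \eqref{eq:relation} as the base case $k=0$ and the relation $A^2 = A$ together with Lemma~\ref{lem:trace_formula} (the trace/distributivity identity for mixed discriminants) to push $k$ upward. The key structural fact is that when $A^2 = A$, the matrix $AB$ behaves like ``$A$ applied to $B$'' in a way that the trace formula can exploit: replacing one slot of a mixed discriminant by $A$ times that slot turns a $B$ into an $AB$, and replacing a slot by $A$ times an existing $AB$ leaves it unchanged (since $A \cdot AB = A^2 B = AB$) or, when applied to an $A$ slot, again does nothing ($A \cdot A = A$). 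So the trace formula will relate $f(i,j,k)$ to a linear combination of $f(i',j',k')$ with controlled shifts of indices.

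Concretely, first I would apply Lemma~\ref{lem:trace_formula} with the distinguished matrix $A$ to the mixed discriminant $\D{A[i],B[j],AB[k-1],I[\M-i-j-k+1]}$, i.e.\ to $f(i,j,k-1)$ with one extra identity slot. Distributing $\Tr{A} = \tr{A}\cdot \M = \M!\,$-normalized appropriately, the sum over slots splits into: slots originally equal to $A$ (each contributes $f(i,j,k-1)$ since $A\cdot A = A$), slots equal to $AB$ (each contributes $f(i,j,k-1)$ since $A\cdot AB = AB$), slots equal to $I$ (each contributes $f(i,j,k)$, turning an $I$ into an $A$... wait, into $A\cdot I = A$, so actually $f(i+1,j,k-1)$), and slots equal to $B$ (each contributes $f(i,j,k)$, turning a $B$ into $AB$). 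Carefully bookkeeping the multiplicities $i$, $k$, $\M-i-j-k+1$, and $j$ respectively, and using $\Tr{A} = \mathrm{rank}(A)$ is \emph{not} something we get for free — so instead I would be careful to write $\Tr{A}$ symbolically and only later observe it cancels. Rearranging this linear relation expresses $f(i,j,k)$ (with coefficient $j + \M-i-j-k+1$) in terms of $f(i,j,k-1)$ and $f(i+1,j,k-1)$. This is the recursion that drives the induction.

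The inductive hypothesis gives $\M!\,f(i',j',k-1) = f(i'+k-1,0,0)\,f(0,j'+k-1,0)$ for all $i',j'$, so substituting into the recursion reduces the claim to a purely combinatorial identity among the quantities $f(\cdot,0,0)$ and $f(0,\cdot,0)$ — and this is precisely the kind of binomial sum that Lemma~\ref{lem:binomial} is built to handle. I would match the recursion's coefficients against the Pascal-type identity in Lemma~\ref{lem:binomial} (with $n = \M$ and appropriate substitutions of $j,k$) to verify that $f(i+k,0,0)\,f(0,j+k,0)/\M!$ indeed satisfies the same recursion and the same $k=0$ initial condition, hence equals $f(i,j,k)$.

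The main obstacle I anticipate is the bookkeeping in applying Lemma~\ref{lem:trace_formula}: getting the multiplicities of each slot-type exactly right, making sure the $\Tr{A}$ terms genuinely cancel between the two sides (they should, because the same $\Tr{A}$ appears when we apply the trace formula to the right-hand side quantities $f(i+k,0,0)$ and $f(0,j+k,0)$ — or, alternatively, because the $A^2=A$ hypothesis forces enough rigidity that $\Tr{A}$ never appears alone), and then recognizing the resulting coefficient identity as an instance of Lemma~\ref{lem:binomial} rather than something new. A secondary subtlety is that the recursion I get relates $f(i,j,k)$ to $f(i+1,j,k-1)$, not just $f(i,j,k-1)$, so the induction on $k$ must be run simultaneously over all $i$ (and all $j$), which is fine since the inductive hypothesis at level $k-1$ is assumed for all index values. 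Once the recursion and the base case are pinned down, the remaining verification is the routine binomial manipulation that Lemma~\ref{lem:binomial} was stated to supply.
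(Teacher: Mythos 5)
Your proposal follows essentially the same route as the paper's proof: apply Lemma~\ref{lem:trace_formula} with $\Tr{A}$ to each slot-type, use $A^2=A$ to see that only the $B$-slots and $I$-slots shift index, obtain a recursion lowering $k$ by one (the paper's Equation~(\ref{eq:fg})), and then induct on $k$ from the base case~(\ref{eq:relation}), closing the binomial bookkeeping with Lemma~\ref{lem:binomial}. The only corrections needed are local: replacing a $B$-slot by $AB$ in $f(i,j,k-1)$ yields $f(i,j-1,k)$ rather than $f(i,j,k)$, so after reindexing the coefficient of $f(i,j,k)$ in the rearranged relation is $j+1$, not $j+\M-i-j-k+1$; with that fixed, your plan of verifying that $\M!^{-1}f(i+k,0,0)\,f(0,j+k,0)$ satisfies the same recursion and initial condition does go through (one more application of the trace formula to the pure-$A$ discriminants gives $f(i+k,0,0)=\frac{a-i-k+1}{\M-i-k+1}f(i+k-1,0,0)$, which is exactly the ratio needed), and in fact makes the final step a telescoping check rather than a genuine use of Lemma~\ref{lem:binomial}.
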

\begin{proof}
Let $\Tr{A} = a$.
Without loss of generality, we can assume $A$ is diagonal (with $a$ $1$s and $(\M - a)$ $0$s on the diagonal).
Hence we can use the formula in Equation~(\ref{eq:permanent}) to calculate $\D{A[i], I[\M-i]}$ by counting the number of permutations for which the first $i$ elements are at most $a$.
That is, 
\begin{equation}
\label{eq:as}
f(i, 0, 0) 
= (a)(a-1) \dots (a-i+1)(\M-i) \dots (1) = \frac{a!(\M-i)!}{(a-i)!}.
\end{equation}
Hence Equation~(\ref{eq:to_prove}) is equivalent to 
\begin{equation}\label{eq:to_show}
f(i, j, k) 
= f(0,j+k, 0) \frac{a!(\M-i-k)!}{\M!(a-i-k)!}.
\end{equation}
Using Lemma~\ref{lem:trace_formula}, we have the relation
\[
af(i, j, k) 
= i f(i, j, k) + j f(i, j-1, k+1) + kf(i, j, k) + (\M-i-j-k) f(i+1, j, k), 
\]
which, after rearranging and shifting indices ($j \to j+1, k \to k-1$) gives
\begin{equation}\label{eq:fg}
(j+1) f(i, j, k) 
= -(a-i-k+1) f(i, j+1, k-1) + (\M-i-j-k) f(i+1, j+1, k-1).
\end{equation}
If we define the quantity $g(i, j, k)$ as 
\[
f(i, j, k) 
= (-1)^i\frac{j!(\M-i-j-k)!}{(a - i - k)!} g(i, j, k)
\]
then substituting gives
\begin{equation}\label{eq:recursion}
g(i, j, k) 
= g(i, j+1, k-1) + g(i+1, j+1, k-1).
\end{equation}
We claim that Equation~(\ref{eq:recursion}) implies  
\begin{equation}\label{eq:solve_recursion}
g(i, j, k) 
= \sum_{t=0}^k \binom{k}{t}g(i + t, j + k, 0).
\end{equation}
This is trivially true for the case $k = 0$, so we can assume it to be true for $k = K-1$ with $K > 1$ and consider the case $k = K$.
Using the relation, we have
\[
g(i, j, k) = g(i, j+1, k-1)  + g(i+1, j+1, k-1)
\]
which by the inductive hypothesis means
\begin{align*}
g(i, j, K) 
&= \sum_{t=0}^{k-1} \binom{k-1}{t} g(i + t, j + k, 0) + \sum_{t=0}^{k-1} \binom{k-1}{t} g(i + t + 1, j + k, 0)
\\&= \sum_{t=0}^{k-1} \binom{k-1}{t} g(i + t, j + k, 0) + \sum_{t=1}^{k}  \binom{k-1}{t-1} g(i + t, j + k, 0)
\\&= \sum_{t=0}^{k} g(i + t, j + k, 0)
\end{align*}
where the last equality uses the identity $\binom{n}{k} = \binom{n-1}{k-1} + \binom{n-1}{k}$.
Combining Equations~(\ref{eq:fg}) and (\ref{eq:solve_recursion}) give
\[
f(i, j, k) = \sum_{t=0}^k \binom{k}{t} (-1)^{t}\frac{j!(\M-i-j-k)!(a - i - t)!}{(j+k)!(\M-i-j-k-t)!(a - i - k)!} f(i+t, j+k, 0).
\]

Now Equations~(\ref{eq:relation}) and (\ref{eq:as}) imply
\[
f(i + t, j+k, 0) = \frac{1}{\M!}f(i + t, 0, 0)f(0, j+k, 0) = \frac{a!(\M-i-t)!}{\M!(a-i-t)!}f(0, j+k, 0)
\]
so that 
\begin{align*}
f(i, j, k)
&= a!f(0, j+k, 0) \sum_{t=0}^k \binom{k}{t} (-1)^{t}\frac{j!(\M-i-j-k)!(\M - i - t)!}{\M!(j+k)!(\M-i-j-k-t)!(a - i - k)!}
\\&= f(0, j+k, 0)\frac{a!j!(\M-i-j-k)!}{\M!(a-i-k)!} \sum_{t=0}^k (-1)^{t} \binom{k}{t}\binom{\M-i-t}{\M-i-j-k-t}.
\end{align*}
Using Lemma~\ref{lem:binomial} reduces this to
\[
f(i, j, k) 
= f(0, j+k, 0)\frac{a!j!(\M-i-j-k)!}{\M!(a-i-k)!} \frac{(\M-i-k)!}{(\M-i-k-j)!j!} 
= f(0, j+k, 0)\frac{a!(\M-i-k)!}{\M!(a-i-k)!}
\]
which equals (\ref{eq:to_show}) as required.
\end{proof}

\section{Applications}\label{sec:applications}

\renewcommand{\deriv}[1]{ D_{#1}}

In this final section, we explore some of the consequences of the definition of the $\M$-finite R-transform.
There are number of ``free'' versions of classical distributions, for example the role of ``free Gaussian'' is played by the semicircle law and the role of ``free Poisson'' distribution is played by the Marchenko--Pastur law \cite{MP}.
The goal then is to try to derive the finite free version of such distributions and then to show they behave (with respect to the free additive convolution) in the way they should.
In particular, we will focus on additive limit theorems associated with these distributions, such as the central limit theorem and Poisson limit theorem.
To do so, we will need to compute additive convolutions, and so we begin by proving some computational properties.

\subsection{Properties of the symmetric additive convolution}
\label{sec:add_prop}

Recall from the Section~\ref{sec:formulas}, that we have the following formula for the additive convolution of two degree $\M$ polynomials:
\begin{equation}
\label{eq:add_conv2}
[p \sqsum_{\M} q](x) 
= \sum_{i + j \leq \M} x^{\M-i-j}(-1)^{i+j} \frac{(\M-i)!(\M-j)!}{(\M-i-j)!\M!} p_i q_j.
\end{equation}
While Equation~(\ref{eq:add_conv2}) was (in that case) only defined for polynomials of degree $\M$, such a formula can be applied to any polynomials.
In particular, we will consider the case when $p$ and $q$ have degree {\em at most} $\M$.

Three useful observations can be made directly from the formula.
The first is linearity:
for any polynomials $p,q,r$ and any constant $\alpha$, we have
\[
[p \sqsum_{\M} (\alpha q + r)] 
= \alpha [p \sqsum_{\M} q] + [p \sqsum_{\M} r] 
\]
The second is the observation that when $\deg(f) = \M$, we have $[f \sqsum_{\M} x^{\M}](y) = f(y)$.
Third is the observation that $[p \sqsum_{\M} q] = [q \sqsum_{\M} p]$.

We now write the formula in Equation~(\ref{eq:add_conv}) in a slightly different form:
\begin{equation}
\label{eq:add_conv_d}
[p \sqsum_{\M} q](x) 
= \frac{1}{\M!}\sum_{i=0}^{\M} p^{(i)}(x)q^{(\M-i)}(0)
\end{equation}
where $p^{(i)}$ denotes the $i$th derivative of $p$.
In this form, the following lemma is almost immediate:
\begin{lemma}
\label{lem:deriv_commute}
Let $R =  \sum_i a_i \ideriv{x}{i}$ be a linear differential operator.
Then 
\[
[R \{ p \} \sqsum_{\M} q](x) 
= [p \sqsum_{\M} R \{ q \} ](x) = R \{ [p \sqsum_{\M} q](x) \}
\]
\end{lemma}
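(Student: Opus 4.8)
The plan is to reduce the whole statement to the single first-order case $R=\partial/\partial x$ and then read the identity directly off Equation~(\ref{eq:add_conv_d}). First I would invoke bilinearity: since $R=\sum_i a_i\ideriv{x}{i}$ and $\sqsum_{\M}$ is linear in each argument (as noted just above), both $[R\{p\}\sqsum_{\M} q]$ and $[p\sqsum_{\M} R\{q\}]$ expand as $\sum_i a_i$ times the corresponding expression with $R$ replaced by $\ideriv{x}{i}$, and likewise $R\{[p\sqsum_{\M} q]\}=\sum_i a_i\,\ideriv{x}{i}\{[p\sqsum_{\M} q]\}$. So it is enough to treat $R=\ideriv{x}{m}$. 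Writing $\ideriv{x}{m}=\frac{\partial}{\partial x}\circ\ideriv{x}{m-1}$ and noting that every polynomial produced by differentiating $p$ or $q$ still has degree at most $\M$, an induction on $m$ then reduces everything to the claim
\[
[p'\sqsum_{\M} q](x)=[p\sqsum_{\M} q'](x)=\frac{\partial}{\partial x}[p\sqsum_{\M} q](x).
\]

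For this first-order claim I would start from Equation~(\ref{eq:add_conv_d}) and differentiate it in $x$, which gives $\frac{1}{\M!}\sum_{i=0}^{\M} p^{(i+1)}(x)\,q^{(\M-i)}(0)$; since $(p')^{(i)}=p^{(i+1)}$, this is exactly Equation~(\ref{eq:add_conv_d}) applied with $p$ replaced by $p'$, so $[p'\sqsum_{\M} q]=\partial_x[p\sqsum_{\M} q]$ term by term. The remaining equality $[p\sqsum_{\M} q']=\partial_x[p\sqsum_{\M} q]$ then follows from the commutativity $[p\sqsum_{\M} q]=[q\sqsum_{\M} p]$ recorded in Section~\ref{sec:add_prop}: applying what was just shown with the roles of $p$ and $q$ interchanged yields $[q'\sqsum_{\M} p]=\partial_x[q\sqsum_{\M} p]=\partial_x[p\sqsum_{\M} q]$, and the left side equals $[p\sqsum_{\M} q']$ by commutativity. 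The inductive step for general $m$ is then mechanical: peel off one derivative with the $m=1$ case and apply the inductive hypothesis.

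I do not expect a genuine obstacle here; the only point that wants a word of care — and the one I would state explicitly — is that Equation~(\ref{eq:add_conv_d}) (equivalently Equation~(\ref{eq:add_conv})) is being used on polynomials such as $p'$ and its iterates, which have degree strictly less than $\M$. This is legitimate because the linear formula is valid for any pair of polynomials of degree at most $\M$, and the degree bound $\deg p,\deg q\le\M$ is precisely what guarantees that all polynomials appearing throughout the induction stay within that range (and incidentally makes the would-be boundary terms $p^{(\M+1)}$, $q^{(\M+1)}$ vanish).
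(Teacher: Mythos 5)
Your argument is correct and follows essentially the same route as the paper: reduce by linearity to a pure derivative operator, verify the identity directly from Equation~(\ref{eq:add_conv_d}) (where only the $p^{(i)}(x)$ factors depend on $x$), and obtain the $q$-side equality from commutativity. The only difference is that your induction on the order $m$ is unnecessary, since the same one-line computation you do for $m=1$ works verbatim for $\ideriv{x}{k}$ with arbitrary $k$, which is what the paper does.
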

\begin{proof}
By linearity, it suffices to prove this for the operator $R = \ideriv{x}{k}$.
\[
\left[ p^{(k)} \sqsum_{\M} q \right](x)
= \frac{1}{\M!}\sum_{i=0}^{\M} p^{(i+k)}(x) q^{(\M-i)}(0)
= \ideriv{x}{k} \frac{1}{\M!}\sum_{i=0}^{\M} p^{(i)}(x)q^{(\M-i)}(0) 
= \ideriv{x}{k} \left[ p \sqsum_{\M} q\right](x).
\]
By the same argument, 
\[
\ideriv{x}{k} \left[ p \sqsum_{\M} q \right](x) 
= [q^{(k)} \sqsum_{\M} p](x) 
= [p \sqsum_{\M} q^{(k)}](x) 
\]
where the last equality is due to the commutativity that was observed earlier.
\end{proof}

Lemma~\ref{lem:deriv_commute} gives an effective way to compute the symmetric additive convolution.
As an example, we give the following corollary proving that, in the space of degree $\M$ polynomials, the symmetric additive convolution is invertible:
\begin{corollary}
For any degree $\M$ polynomial $p$, there exists a degree $\M$ polynomial $q$ such that $[p \sqsum_{\M} q] = x^{\M}$.
\end{corollary}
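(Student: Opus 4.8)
The plan is to exploit the two structural facts about $\sqsum_\M$ just established: that $x^{\M}$ behaves as an identity element, in the sense that $[f \sqsum_{\M} x^{\M}] = f$ whenever $\deg f = \M$, and that constant-coefficient linear differential operators pass through the convolution (Lemma~\ref{lem:deriv_commute}). Together these reduce the existence of $q$ to an elementary linear-algebra statement about the derivatives of $p$.

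First I would note that because $p$ has degree exactly $\M$, the polynomials $p, p', p'', \dots, p^{(\M)}$ have pairwise distinct degrees $\M, \M-1, \dots, 0$, and therefore form a basis of the $(\M+1)$-dimensional space of polynomials of degree at most $\M$. Hence there is a unique linear differential operator $R = \sum_{k=0}^{\M} a_k \ideriv{x}{k}$ with constant coefficients such that $R\{p\} = x^{\M}$.

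Next I would set $q := R\{x^{\M}\}$ and compute, using Lemma~\ref{lem:deriv_commute} followed by the identity-element property (which applies since $\deg p = \M$),
\[
[p \sqsum_{\M} q](x) = [p \sqsum_{\M} R\{x^{\M}\}](x) = R\bigl\{ [p \sqsum_{\M} x^{\M}](x) \bigr\} = R\{ p(x) \} = x^{\M},
\]
which is precisely the desired equation.

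It remains to confirm that $q$ has degree exactly $\M$, and this is the only point requiring any care. Expanding $q = R\{x^{\M}\} = \sum_{k=0}^{\M} a_k \tfrac{\M!}{(\M-k)!}\, x^{\M-k}$, its leading coefficient is $a_0$; and $a_0$ is the coefficient of $p = p^{(0)}$ in the expansion of $x^{\M}$ in the basis $\{p^{(k)}\}_{k=0}^{\M}$. Since $p$ is the only basis element of degree $\M$, comparing leading coefficients forces $a_0 = 1/\ell$, where $\ell \neq 0$ is the leading coefficient of $p$; in particular $a_0 \neq 0$, so $\deg q = \M$. The main (and rather minor) obstacle is thus just this degree bookkeeping — ensuring the preimage is not accidentally of lower degree — while the heart of the argument is the observation that $\{p^{(k)}\}$ is a basis, which is what makes the operator $R$ exist.
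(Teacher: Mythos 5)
Your proof is correct and is essentially the paper's argument in mirror image: the paper writes $p = R\{x^{\M}\}$ with $R$ an invertible (nonzero constant term) differential operator and sets $q = R^{-1}\{x^{\M}\}$, whereas you solve $R\{p\} = x^{\M}$ directly via the basis $\{p^{(k)}\}$ and apply Lemma~\ref{lem:deriv_commute} once instead of twice — the resulting $q$ and the key lemma are the same. Your explicit check that $\deg q = \M$ (via $a_0 = 1/\ell \neq 0$) is a small point the paper leaves implicit.
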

\begin{proof}
Since $p$ has degree $\M$, we can write  $p(x) = R \{ x^{\M}\}$ where $R$ is a linear differential operator with a nonzero identity term.
Viewing this as a power series, we can compute the formal (multiplicative) inverse $R^{-1}$ and then compute $q = R^{-1} \{ x^{\M}\}$.
As a result, we have
\[
[p \sqsum_{\M} q](x) 
= [R\{ x^d \} , R^{-1} \{ x^d \} ](x) = R\{ R^{-1}\{ [ x^{\M} \sqsum_{\M} x^{\M} ] \} \} = x^{\M}
\]
\end{proof}

Last but not least, it is not hard to show that $\sqsum_{\M}$ is associative (and therefore forms an algebra on polynomials degree $\M$).

\begin{lemma}
Let $p, q, r$ be polynomials.
Then 
\[
[p \sqsum_{\M} [ q \sqsum_{\M} r]] 
= [[p \sqsum_{\M}  q] \sqsum_{\M} r]
\]
\end{lemma}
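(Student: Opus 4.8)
The plan is to reduce the associativity statement to the single fact that convolving by a fixed polynomial is the same thing as applying a fixed linear differential operator with constant coefficients; once that is in hand, associativity drops out of Lemma~\ref{lem:deriv_commute} with essentially no extra work.

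First I would record two elementary observations. Since $(x^{\M})^{(k)}$ is a nonzero scalar multiple of $x^{\M-k}$ for $0\le k\le\M$, the polynomials $x^{\M},(x^{\M})',\dots,(x^{\M})^{(\M)}$ form a basis of the space of polynomials of degree at most $\M$; hence every polynomial $s$ of degree at most $\M$ can be written as $s=R_s\{x^{\M}\}$ for a (unique, of order at most $\M$) linear differential operator $R_s=\sum_{k=0}^{\M}a_k\ideriv{x}{k}$. Second, reading off Equation~(\ref{eq:add_conv_d}), for any polynomial $p$ of degree at most $\M$ one has $[p\sqsum_{\M}x^{\M}]=p$: in the sum $\frac{1}{\M!}\sum_i p^{(i)}(x)(x^{\M})^{(\M-i)}(0)$ every term with $i>0$ vanishes at $0$, leaving $\frac{1}{\M!}p(x)\cdot\M!=p(x)$.

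Combining these with Lemma~\ref{lem:deriv_commute} gives the key identity
\[
[p\sqsum_{\M}s]=[p\sqsum_{\M}R_s\{x^{\M}\}]=R_s\{[p\sqsum_{\M}x^{\M}]\}=R_s\{p\},
\]
valid for all polynomials $p,s$ of degree at most $\M$. Now, given $p,q,r$ of degree at most $\M$, I would apply this identity twice and Lemma~\ref{lem:deriv_commute} once. Writing the inner convolution as $q\sqsum_{\M}r=R_r\{q\}$ (the key identity applied with first argument $q$), we obtain
\[
[p\sqsum_{\M}[q\sqsum_{\M}r]]=[p\sqsum_{\M}R_r\{q\}]=R_r\{[p\sqsum_{\M}q]\}=[[p\sqsum_{\M}q]\sqsum_{\M}r],
\]
where the middle equality is Lemma~\ref{lem:deriv_commute} (pulling $R_r$ out of the convolution) and the last equality is the key identity once more, this time with first argument $[p\sqsum_{\M}q]$ and second argument $r$. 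This is exactly the claimed associativity.

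There is no serious obstacle here; the proof is essentially bookkeeping around Lemma~\ref{lem:deriv_commute}. The one point that warrants a moment's care is that the intermediate convolution $[p\sqsum_{\M}q]$ may have degree strictly less than $\M$, so one must use the version of $[u\sqsum_{\M}x^{\M}]=u$ that holds for every $u$ of degree at most $\M$ (which is the form extracted above from Equation~(\ref{eq:add_conv_d})), rather than only for degree exactly $\M$; equivalently, one may simply restrict attention to monic degree-$\M$ polynomials, for which every convolution that appears is again monic of degree $\M$ and the quoted identity applies verbatim.
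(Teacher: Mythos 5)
Your proof is correct, but it takes a different route from the paper's. The paper proves associativity by brute force: it plugs both sides into the derivative formula of Equation~(\ref{eq:add_conv_d}) and observes that each equals the double sum $\frac{1}{(\M!)^2}\sum_{i=0}^{\M}\sum_{j=0}^{\M-i}p^{(i)}(x)\,q^{(j)}(0)\,r^{(2\M-i-j)}(0)$. You instead first establish the identity $[p\sqsum_{\M}s]=R_s\{p\}$, where $R_s$ is the constant-coefficient differential operator with $R_s\{x^{\M}\}=s$, and then let associativity fall out of Lemma~\ref{lem:deriv_commute}. Both arguments are short and both are sound; your observations ($x^{\M}$ is a two-sided identity on polynomials of degree at most $\M$, and the derivative basis $\{(x^{\M})^{(k)}\}_{k\le\M}$ spans) are correct, and your caveat about the intermediate convolution possibly having degree below $\M$ is handled properly. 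What your route buys is conceptual clarity: it exhibits $s\mapsto R_s$ as the map under which $\sqsum_{\M}$ becomes composition of commuting operators, which is precisely the isomorphism with $\C[x]/\langle x^{\M+1}\rangle$ that the paper only remarks on informally after stating the lemma. What the paper's route buys is independence from Lemma~\ref{lem:deriv_commute} and a symmetric closed form for the triple convolution that makes the claim visibly true at a glance.
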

\begin{proof}
A quick calculation shows that both are equal to 
\[
\frac{1}{(\M!)^2}\sum_{i=0}^{\M} \sum_{j = 0}^{\M-i} f^{(i)}(x)g^{(j)}(0)h^{(2\M-i-j)}(0)
\]
\end{proof}
The algebra formed by $\sqsum_{\M}$ is isomorphic to the algebra of truncated polynomials $\C[x] / \langle x^{\M+1} \rangle$ under multiplication.
This can easily be seen by considering the homomorphism $\phi$ taking a polynomial $p$ to the differential operator $R$ such that $R \{ x^{\M} \} = p$.
As noted in \cite{conv}, the symmetric additive convolution of two real rooted polynomials is another real rooted polynomial.
This forms a cone in the algebra of truncated polynomials that could be interesting in its own right.

\subsection{Distributions and Limit Theorems}

The approach we will take to finding the finite free versions of various distributions is to reverse engineer them from their R-transforms.
That is, for a given R-transform, we would like to find the degree $\M$ polynomial whose $\M$-finite R-transform matches the given R-transform (on the first $\M$ coefficients).
The introduces a slight technicality; the reverse procedure is not unique, since any constant multiple of a polynomial has the same roots that the original polynomial did.
Hence we will introduce the notation $p \approx q$ to denote that $p$ and $q$ have the same roots (or $p = cq$ for some $c \neq 0$, if you prefer).
The next lemma comes directly from the definitions in 
Section~\ref{sec:add_conv}:

\begin{lemma}
\label{lem:halfway}
Let $Q(x)$ be a polynomial. 
Then
\[
Q\left(\frac{\deriv{x}}{\M} \right) \left\{ x^{\M} \right\} 
\approx \mydet{xI - A}
\iff
\frac{1}{\M} \frac{Q'(x)}{Q(x)} 
\equiv -\myrtrans{\M}{\mu_A}{x} \mymod{x}{\M+1}
\]
\end{lemma}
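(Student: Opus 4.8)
The plan is to reduce both sides of the stated equivalence to a single condition on the power series $Q$, namely that, for some nonzero constant $c$,
\[
Q(y) \equiv c\,\E{}{e^{-\M y T_A}} \mymod{y}{\M+1},
\]
where $T_A$ is the \marcus~transform of $\eigen{A}$ and $\E{}{e^{-\M y T_A}}$ denotes the power series in $y$ whose $y^{m}$-coefficient is $\tfrac{(-\M)^m}{m!}\E{}{T_A^m}$.

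First I would dispose of the left-hand side. Since $\deriv{x}^k$ annihilates $x^{\M}$ for $k>\M$, only $Q \mymod{y}{\M+1}$ affects $Q(\deriv{x}/\M)\{x^{\M}\}$, so I may assume $\deg Q \le \M$; among such $Q$, the map $Q \mapsto Q(\deriv{x}/\M)\{x^{\M}\}$ is a linear bijection onto polynomials of degree at most $\M$ (it is diagonal in the monomial bases after reindexing $k \leftrightarrow \M-k$). The key identity is the Taylor shift $e^{-t\deriv{x}}\{x^{\M}\} = (x-t)^{\M}$, which by linearity gives
\[
\Big(\E{}{e^{-\M y T_A}}\Big)\!\Big(\tfrac{\deriv{x}}{\M}\Big)\{x^{\M}\} \;=\; \E{}{e^{-T_A\deriv{x}}\{x^{\M}\}} \;=\; \E{}{(x-T_A)^{\M}} \;=\; \mydet{xI - A},
\]
the last step being the defining property of the \marcus~transform (as used in Lemma~\ref{lem:poly_conv} and Lemma~\ref{lem:add_compute}). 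By injectivity of that map, $Q(\deriv{x}/\M)\{x^{\M}\} \approx \mydet{xI-A}$ is therefore equivalent to the displayed condition on $Q$.

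Then I would unpack the right-hand side. If $Q(0)=0$ both sides fail — the left because $Q(\deriv{x}/\M)\{x^{\M}\}$ then has degree strictly below $\M$, the right because $Q'/Q$ is then not a power series — so I may assume $Q(0)\neq 0$. Writing $\tfrac1\M \tfrac{Q'(x)}{Q(x)} = \tfrac1\M \deriv{x}\ln Q(x)$ and invoking Corollary~\ref{cor:rtrans} in the form $-\myrtrans{\M}{\mu_A}{x} \equiv \tfrac1\M\deriv{x}\ln\E{}{e^{-\M x T_A}}$ (to the relevant order), the right-hand congruence says precisely that $\deriv{x}\ln Q(x)$ and $\deriv{x}\ln\E{}{e^{-\M x T_A}}$ agree to that order. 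Antidifferentiating such a congruence of derivatives recovers the two series up to a single additive constant, and exponentiating turns that constant into a multiplicative one; since $Q(0)\neq 0$ and $\E{}{e^{-\M\cdot 0\cdot T_A}}=1$, both logarithms are genuine power series and this passage is reversible. Hence the right-hand side is equivalent to $Q(x) \equiv c\,\E{}{e^{-\M x T_A}} \mymod{x}{\M+1}$ for some $c\neq 0$ — the same condition obtained from the left-hand side — which proves the lemma.

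The one step that demands care is the accounting of truncation orders in the last paragraph: passing between a congruence on $Q$ and one on $\deriv{x}\ln Q$ shifts the order by one, and one must line this up against the order to which Corollary~\ref{cor:rtrans} actually pins down $\myrtrans{\M}{\mu_A}{x}$ and against the fact that $Q$ has degree at most $\M$. Everything else is routine comparison of coefficients.
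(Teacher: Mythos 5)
Your proposal is correct and follows essentially the same route as the paper: both reduce the right-hand congruence, via Corollary~\ref{cor:rtrans}, to the condition $Q(x) \approx \E{}{e^{-\M x T_A}} \mymod{x}{\M+1}$ and then connect this to the left-hand side through the Taylor shift identity $e^{-tD_x}\{x^{\M}\} = (x-t)^{\M}$ and the defining property of the \marcus~transform. You are somewhat more careful than the paper in two places it leaves implicit --- the injectivity of $Q \mapsto Q(D_x/\M)\{x^{\M}\}$ on polynomials of degree at most $\M$ (needed for the forward implication) and the reversibility of the antidifferentiation step --- but these are refinements of the same argument, not a different one.
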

\begin{proof}
By Corollary~\ref{cor:rtrans}, we have 
\[
\myrtrans{\M}{\mu_A}{s} 
\equiv -\frac{1}{m}\deriv{s} \ln \E{}{e^{-{\M}sT_A}} \mymod{s}{\M}
\]
and so 
\[
\frac{1}{\M} \frac{Q'(x)}{Q(x)} 
\equiv -\myrtrans{\M}{\mu_A}{x} \mymod{x}{\M+1}
\]
if and only if
\[
Q(x) 
\approx \E{}{e^{-{\M}sT_A}} \mymod{x}{\M+1}
\]
where $T_A$ is the \marcus~transform of $\eigen{A}$.
We note that for any scalar $t$ and any polynomial $p(x)$, we have by Taylor's theorem
\[
p(x + t) = \sum_{i} \frac{t^i}{i!}\ideriv{x}{i} \{ p(x) \} = e^{t\deriv{x}} \{ p(x) \}.
\] 
Hence
\[
Q\left(\frac{\deriv{x}}{\M} \right) \left\{ x^{\M} \right\} 
\approx \E{}{e^{-T_A\deriv{x}}} \left\{ x^{\M} \right\} 
= \E{}{(x - T_A)^{\M}} 
\]
which is $\mydet{xI - A}$ by definition of $T_A$.
\end{proof}

It is worth noting that, for our purposes, we do not need to worry about the restriction to $\M$ coefficients, as this comes automatically from taking derivatives of $x^{\M}$.
We also note that the constant of integration (that will come when we integrate) is precisely the cause of the ambiguity that necessitated the use of $\approx$ (and so can safely be ignored).

The rest of this section will be the derivation of polynomials corresponding to known distributions and then proofs of their associated limit laws.
The proofs will use the polynomials directly (and the techniques of Section~\ref{sec:add_prop}) and not the R-transforms --- something that is not possible in traditional free probability.
Of course (in these cases) we know the R-transform, so it is somewhat backwards to find the polynomials and use them when we could simply use the R-transforms.
The goal, however, is to prove that such a proof is possible, as there are (many) instances where one can not compute the R-transform from a given distribution, and so we anticipate that the techniques displayed below will serve useful in such scenarios. 

\subsubsection{Constant}
The constant random variable is characterized by having first cumulant nonzero and all other cumulants zero, so that 
\[
\myrtrans{\M}{\mu_A}{s} = \mu
\]
for $\mu$ a constant.
Using Lemma~\ref{lem:halfway}, we can compute
\[
\ln Q(x) 
= -\mu\M \int \d{x} 
= -x\mu\M + c
\]
for some constant $c$.
Hence
\[
Q\left(\frac{\deriv{x}}{\M} \right) \left\{ \frac{x^{\M}}{\M!} \right\} 
\approx e^{-\mu\deriv{x}} \left\{ x^{\M} \right\} 
= (x - \mu)^{\M}.
\]
This is precisely what we would hope, as 
\[
(x - \mu)^{\M} 
= \mydet{xI - \mu I}
\]
which is exactly the random matrix version of adding a constant random variable.

The limit theorem associated with the constant distribution is the {\em law of large numbers}.
The classical version of this theorem states that if $X_i$ are independent random variables with $\E{}{X_i} = \mu$, then 
\[
\frac{1}{n} \sum_{i=0}^{n} X_i 
\xrightarrow{n \to \infty} \mu
\]
almost surely.  
Here we give an $\M$-finite version of this result: 
\begin{theorem}[Law of large numbers]
\label{thm:lln}
Let $p_1, p_2, \dots$ be a sequence of degree $\M$ real rooted polynomials with 
\[
p_i 
= \prod_j (x - r_{i,j})
\AND
\frac{1}{\M}\sum_j r_{i,j} =\mu
\]
for all $i$.
Assume further that 
\[
\frac{1}{\M}\sum_j r_{i,j}^2 < C
\]
for some constant $C$.
Then
\[
\lim_{n \to \infty} [q_1 \sqsum_{\M} \dots \sqsum_{\M} q_n] 
= (x - \mu)^{\M}
\]
where $q_i(x) = n^{-\M} p_i(n x)$.
\end{theorem}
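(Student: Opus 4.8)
The plan is to convert the claim into a statement about convergence of the first $\M$ moments of a sum of independent \marcus~transforms, and then to verify that by a direct combinatorial expansion; no analytic tools are needed beyond what is already in Section~\ref{sec:marcus_transform}.

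First I would set up the reduction. Each $q_i(x) = n^{-\M}p_i(nx)$ is monic of degree $\M$ with roots $r_{i,j}/n$, hence real rooted, so $[q_1 \sqsum_{\M} \cdots \sqsum_{\M} q_n]$ is a real rooted, monic, degree-$\M$ polynomial, and the asserted limit is to be read coefficient-wise. Let $U_i$ denote the \marcus~transform of the roots of $q_i$, with the $U_i$ taken to be mutually independent. Iterating Lemma~\ref{lem:poly_conv} (and using the uniqueness in Lemma~\ref{lem:marcus_transform} to pass the \marcus~transform through each intermediate convolution) gives
\[
[q_1 \sqsum_{\M} \cdots \sqsum_{\M} q_n](x)
= \E{}{\left(x - \sum_{i=1}^n U_i\right)^{\M}}
= \sum_{k=0}^{\M}\binom{\M}{k}(-1)^k x^{\M-k}\,\E{}{\left(\sum_{i=1}^n U_i\right)^{k}}.
\]
Since $(x-\mu)^{\M}=\sum_{k}\binom{\M}{k}(-1)^k x^{\M-k}\mu^k$, it suffices to prove that $\E{}{(U_1+\cdots+U_n)^k}\to\mu^k$ as $n\to\infty$ for each $0\le k\le\M$.

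Second I would record uniform bounds on the moments of the individual $U_i$. By the defining property of the \marcus~transform, $\E{}{U_i^k}=e_k/\binom{\M}{k}$, where $e_k$ is the $k$th elementary symmetric function of the $\M$ roots $r_{i,1}/n,\dots,r_{i,\M}/n$ of $q_i$. In particular $\E{}{U_i}=\frac1\M\sum_j r_{i,j}/n=\mu/n$ for every $i$, by the hypothesis that each $p_i$ has root-average $\mu$. The hypothesis $\frac1\M\sum_j r_{i,j}^2<C$ forces $|r_{i,j}|<\sqrt{\M C}$, hence $|r_{i,j}/n|<\sqrt{\M C}/n$, and therefore $|\E{}{U_i^k}|\le(\M C)^{k/2}n^{-k}$ for all $i$ and all $1\le k\le\M$; this bound is uniform in $i$, and each $\E{}{U_i^k}$ is real by Lemma~\ref{lem:real}. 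Now expand by independence:
\[
\E{}{(U_1+\cdots+U_n)^k}
= \sum_{(i_1,\dots,i_k)\in[n]^k}\E{}{U_{i_1}\cdots U_{i_k}}
= \sum_{\pi}\ \sum_{\text{tuples of type }\pi}\ \prod_{\ell=1}^{|\pi|}\E{}{U_{j_\ell}^{a_\ell}},
\]
where $\pi$ ranges over set partitions of $[k]$, $j_1,\dots,j_{|\pi|}$ are the distinct indices of a tuple of type $\pi$, and $a_\ell$ is the size of the $\ell$th block. The unique partition into singletons contributes $\frac{n!}{(n-k)!}(\mu/n)^k=\mu^k\prod_{\ell=0}^{k-1}(1-\ell/n)\to\mu^k$. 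Every other partition has at most $k-1$ blocks, so it has at most $n^{k-1}$ tuples of that type, while each such tuple contributes at most $\prod_\ell (\M C)^{a_\ell/2}n^{-a_\ell}=(\M C)^{k/2}n^{-k}$ in modulus, so its total contribution is $O(1/n)$. Summing over the finitely many partitions gives $\E{}{(U_1+\cdots+U_n)^k}=\mu^k+O(1/n)\to\mu^k$, which completes the argument.

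The main obstacle is not depth but uniformity: the estimate works precisely because the mean $\mu$ and the second-moment bound $C$ do not depend on $i$, which is what keeps $|\E{}{U_i^k}|$ at $O(n^{-k})$ uniformly across the sequence; dropping the second-moment hypothesis would be fatal, since the higher moments of a \marcus~transform can be large even when the first moment is fixed (cf.\ the example following Lemma~\ref{lem:real}). One could alternatively route the proof through the additivity of the $\M$-finite R-transform together with Lemma~\ref{lem:halfway}, but the moment computation above is more transparent and is the kind of direct, polynomial-level argument advertised at the start of this section.
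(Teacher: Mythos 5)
Your proof is correct, but it takes a genuinely different route from the paper's. The paper never touches the \marcus~transform in its proof of Theorem~\ref{thm:lln}: it writes each $q_i$ as $Q_i\{x^{\M}\}$ for a linear differential operator $Q_i = 1 - \frac{\mu}{n}\deriv{x} + O(n^{-2})$, multiplies the operators using Lemma~\ref{lem:deriv_commute}, and identifies the limit of $\bigl(1 - \frac{\mu}{n}\deriv{x} + O(n^{-2})\bigr)^n\{x^{\M}\}$ with $e^{-\mu\deriv{x}}\{x^{\M}\} = (x-\mu)^{\M}$ --- a ten-line computation in the algebra of truncated differential operators from Section~\ref{sec:add_prop}. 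You instead route everything through Lemma~\ref{lem:poly_conv}, reduce to convergence of the first $\M$ moments of $U_1+\dots+U_n$, and run a classical moment-method argument; your identity $\E{}{U_i^k} = e_k/\binom{\M}{k}$ and the resulting bound $|\E{}{U_i^k}| \le (\M C)^{k/2}n^{-k}$ are both right, the singleton partition gives $\mu^k\prod_{\ell<k}(1-\ell/n)$, and the remaining partitions contribute $O(1/n)$ as you say. What your approach buys is an explicit and uniform treatment of the error terms: the paper's ``$O(n^{-2})$'' silently requires the higher coefficients of the $p_i$ to be bounded uniformly in $i$, which is exactly what the hypothesis $\frac{1}{\M}\sum_j r_{i,j}^2 < C$ provides, and you are the one who actually cashes that hypothesis in. What the paper's approach buys is brevity and reusability --- the identical operator manipulation is recycled verbatim for the central limit theorem and the Poisson limit theorem, whereas the moment method would have to be redone for each. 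The one step you correctly flagged but should not gloss over in a final write-up is the iteration of Lemma~\ref{lem:poly_conv}: the statement is for two polynomials, and passing to $n$ factors requires the observation that the degree-$\M$ \marcus~transform of $[q_1\sqsum_{\M}q_2]$ shares its first $\M$ moments with $U_1+U_2$ (by the uniqueness in Lemma~\ref{lem:marcus_transform}), and that only moments of order at most $\M$ ever enter the coefficient formulas, so the substitution is harmless.
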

\begin{proof}
For fixed $i$, we can write $p_i(x) = x^{\M} + a_1 x^{\M-1} + \dots$.
Since $a_1$ is the sum of the roots, we have (by the hypotheses) $a_1 = \M\mu$.
So if $P_i$ is the linear differential operator such that $P_i \{ x^{\M} \} = p_i(x)$, we have
\[
P_i 
= 1 + \frac{a_1}{\M}\deriv{x} + \dots 
= 1 + \mu\deriv{x} + \dots 
\]
Now let $Q_i$ be the differential operator such that $Q_i \{ x^{\M} \} = q_i(x)$
Then one can check that 
\[
Q_i 
= 1 + \frac{a_1}{{\M}n}\deriv{x} + O\left(n^{-2}\right) 
= 1 -  \frac{\mu}{n}\deriv{x} + O\left(n^{-2}\right)
\]
and so 
\[
[q_1 \sqsum_{\M} \dots \sqsum_{\M} q_n] 
= \left(\prod_{i=1}^n Q_i \right) \left\{ x^{\M} \right\} 
= \left(1 -  \frac{\mu}{n}\deriv{x} + O\left(n^{-2}\right)\right)^n \left \{ x^{\M} \right\}
\]
which converges to 
\[
e^{-\mu\deriv{x}} \left\{  x^{\M} \right\} 
= (x - \mu)^{\M}
\]
as $n \to \infty$.
\end{proof}

\subsubsection{Gaussian}
\label{sec:gaussian}

The Gaussian random variable is characterized by having first two cumulants nonzero and all other cumulants zero.
Given that we know how the constant random variable behaves, it suffices to consider the case when only the second cumulant is nonzero, so that 
\[
\myrtrans{\M}{\mu_A}{s} = s\theta
\]
for $\theta$ a constant.
The computation is similar to the constant random variable, as we get
\[
\ln Q(x) 
= -\theta\M \int x \d{x} 
= -\frac{\M \theta}{2} x^2 + c
\]
so that 
\[
Q\left(\frac{\deriv{x}}{\M} \right) \left\{ x^{\M}\right\} 
\approx e^{-\theta x^2 / (2 \M)} \left\{ x^{\M} \right\}
\approx H_{\M} \left( \sqrt{\frac{\M}{\theta}}x \right)
\]
where $H_{\M}(x)$ is a Hermite polynomial.
This leads to the following definition:
For real numbers $\mu$ and $\sigma$, let 
\begin{equation}
\label{eq:normal}
\mathfrak{N}_{\M}[\mu, \sigma^2](x) 
:= \left(\frac{\M-1}{\sigma^2}\right)^{-\M/2} H_{\M} \left( (x - \mu)\sqrt{\frac{\M-1}{\sigma^2}} \right)
\end{equation}

Firstly, we remark that the constant in front is merely to make the polynomial monic (it is irrelevant, since all that matters are the roots).
Secondly, we remark that it should come as no surprise that the corresponding polynomial should be the Hermite polynomial, since the root distribution must converge asymptotically to the semicircle law (the free probability version of the Gaussian), and it is well known that the roots of the Hermite polynomials do exactly that \cite{conv_hermite}.

Lastly, we want to comment on the factor of $\M-1$ (rather than $\M$ from the 
derivation above).
Recall that the coefficients of the $\M$-finite R-transform {\em converge} to the true R-transform.
Apart from the first coefficient, however, they do not match exactly.
In particular, an operator $X$ with mean and variance $\tau$ and $\sigma^2$ will have R-transform
\[
\rtrans{\mu_X}{s} 
= \tau + \sigma^2s + \dots
\]
and $\M$-finite R-transform
\[
\rtrans{\mu_X}{s} 
= \tau + \left(1 - \frac{1}{\M}\right)\sigma^2s + \dots
\]
Hence by working backwards from $\kappa_2 = 1$, we were mistakenly giving the distribution a variance of $\frac{\M}{\M+1}$.

\begin{remark}
Note that there are two distinct scalings of Hermite polynomials in the literature.
The first (which is used in \cite{szego_book}), defines Hermite polynomials as the polynomials which are orthogonal to the weight function $e^{x^2}$. 
The second (which we will use), defines Hermite polynomials as the polynomials which are orthogonal to the Gaussian weight function $e^{x^2/2}$. 
This seemed to be the more appropriate choice, given the role that the Hermite polynomials are playing in this context (as the ``Gaussian'' of finite free probability).
\end{remark}

The limit theorem associated with the Gaussian is the {\em central limit theorem}.
The classical version of this theorem says that for $X_i$ i.i.d. random variables $\E{}{X_i} = 0$ and $\mathrm{Var}[Xi] = \sigma^2 < \infty$, we have
\[
\frac{1}{\sqrt{n}}\sum_{i=1}^n X_i 
\xrightarrow{n \to \infty} N(0,\sigma^2).
\]
in distribution (where $N(\mu, \sigma^2)$ is the Gaussian distribution).
Here we give a proof in the $\M$-finite case:
\begin{theorem}[Central limit theorem]
\label{thm:CLT}
Let $p_1, p_2, \dots$ be a sequence of degree $\M$ real rooted polynomials with $p_i = \prod_j (x - r_{i,j})$ such that 
\[
\sum_j r_{i,j} 
= 0
\AND
\frac{1}{\M}\sum_j r_{i,j}^2  
= \sigma^2
\]
for all $i$.
Then
\[
\lim_{n \to \infty} [q_1 \sqsum_{\M} \dots \sqsum_{\M} q_n] 
= \mathfrak{N}_{\M}[0, \sigma^2]
\]
where $q_i(x) = n^{-\M/2} p_i(\sqrt{n} x)$.
\end{theorem}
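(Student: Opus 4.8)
The plan is to run the same machinery that powered the law of large numbers (Theorem~\ref{thm:lln}), namely to translate everything into linear differential operators acting on $x^\M$. For each $i$ let $Q_i$ be the operator with $Q_i\{x^\M\} = q_i(x)$. Since every polynomial occurring has degree exactly $\M$, the same manipulation as in the proof of Theorem~\ref{thm:lln} — repeated use of Lemma~\ref{lem:deriv_commute} together with the associativity, commutativity, and the identity $[f \sqsum_\M x^\M] = f$ recorded in Section~\ref{sec:add_prop} — gives
\[
[q_1 \sqsum_\M \dots \sqsum_\M q_n] = \left( \prod_{i=1}^n Q_i \right)\left\{ x^\M \right\},
\]
so the problem reduces to controlling this operator product as $n \to \infty$.

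Next I would expand each $Q_i$ in powers of $\deriv{x}$. Writing $p_i(x) = \sum_k (-1)^k e_k(r_{i,1},\dots,r_{i,\M})\, x^{\M-k}$, one reads off that the coefficient of $\deriv{x}^k$ in the operator for $p_i$ is $\tfrac{(-1)^k(\M-k)!}{\M!}\,e_k$. The hypothesis $\sum_j r_{i,j}=0$ makes $e_1=0$, so the first-order term vanishes, while $e_2 = \tfrac12\big(e_1^2 - \sum_j r_{i,j}^2\big) = -\tfrac12\M\sigma^2$, so the second-order coefficient equals $-\tfrac{\sigma^2}{2(\M-1)}$ for every $i$. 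The rescaling $q_i(x) = n^{-\M/2}p_i(\sqrt n\, x)$ multiplies the coefficient of $\deriv{x}^k$ by $n^{-k/2}$, giving
\[
Q_i = 1 - \frac{\sigma^2}{2(\M-1)}\cdot\frac{\deriv{x}^2}{n} + O(n^{-3/2}),
\]
where the crucial point is that the $O(n^{-3/2})$ term has coefficients bounded uniformly in $i$: from $\tfrac1\M\sum_j r_{i,j}^2=\sigma^2$ we get $|r_{i,j}|\le\sqrt\M\,\sigma$, hence $|e_k|\le\binom{\M}{k}(\sqrt\M\,\sigma)^k$ for all $i$ and $k$.

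Then, since the $Q_i$ are polynomials in $\deriv{x}$ they commute, and on the finite-dimensional space of polynomials of degree at most $\M$ (where $\deriv{x}$ is nilpotent) one may take logarithms termwise; summing, $\log\prod_{i=1}^n Q_i = -\tfrac{\sigma^2}{2(\M-1)}\deriv{x}^2 + n\cdot O(n^{-3/2}) + O(n^{-1}) \to -\tfrac{\sigma^2}{2(\M-1)}\deriv{x}^2$, so $\prod_{i=1}^n Q_i$ converges to $\exp\!\big(-\tfrac{\sigma^2}{2(\M-1)}\deriv{x}^2\big)$ and
\[
\lim_{n\to\infty}[q_1 \sqsum_\M \dots \sqsum_\M q_n] = \exp\!\left(-\frac{\sigma^2}{2(\M-1)}\deriv{x}^2\right)\left\{x^\M\right\}.
\]
Finally, the computation already carried out in Section~\ref{sec:gaussian} (with $\theta = \M\sigma^2/(\M-1)$) identifies the right-hand side with $H_\M\!\big(x\sqrt{(\M-1)/\sigma^2}\big)$, which has the same roots as $\mathfrak{N}_\M[0,\sigma^2](x)$.

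The main obstacle is the uniform estimate in the second step. In the classical i.i.d.\ setting all higher moments agree, whereas here the $p_i$ are allowed to differ beyond the second moment, so one must argue that these differences do not accumulate — the finite-free analogue of a Lindeberg-type condition. What makes it go through (and cleaner than the classical case) is that we work in a fixed dimension $\M$: the variance hypothesis alone confines the roots to a compact set and therefore bounds every elementary symmetric function, which is exactly what the $O(n^{-3/2})$ bound needs. A secondary subtlety is the distinction between $\M$ and $\M-1$ in the scaling — the same discrepancy between the $\M$-finite R-transform and Voiculescu's that is flagged at the end of Section~\ref{sec:gaussian}, and the reason the definition of $\mathfrak{N}_\M$ carries a factor $\M-1$ rather than $\M$.
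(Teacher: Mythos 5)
Your proof is correct and follows essentially the same route as the paper: expand each $q_i$ as a differential operator $Q_i$ acting on $x^{\M}$, use the hypotheses to pin down the $\deriv{x}^2$ coefficient as $-\sigma^2/(2n(\M-1))$, and let the operator product converge to $\exp\bigl(-\tfrac{\sigma^2}{2(\M-1)}\deriv{x}^2\bigr)$. The only difference is that you make explicit two points the paper leaves to the reader --- the uniform bound on the higher coefficients (which the paper addresses only in the remark following the proof) and the passage from the non-identical operator product to the exponential via logarithms of nilpotent operators.
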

\begin{proof}
For fixed $i$, we can write $p_i(x) = x^{\M} + a_1 x^{\M-1} + a_2 x^{\M-2} + \dots$.
Since $a_1$ is the sum of the roots and $a_2$ is the sum of the pairwise products of roots, we have (by the hypotheses)
\[
a_1 
= 0
\AND
a_1^2 - 2 a_2 
= \M \sigma^2
\]
so that $a_2 = -\M \sigma^2/2$.
So if $P_i$ is the linear differential operator such that $P_i \{ x^{\M} \} = p_i(x)$, we have
\[
P_i = 1 + \frac{a_1}{\M}\deriv{x} + \frac{a_2}{\M(\M-1)}\deriv{x}^2 + \dots  = 1 -  \frac{\sigma^2}{2(\M-1)}\deriv{x}^2 + \dots
\]
Now let $Q_i$ be the differential operator such that $Q_i \{ x^{\M} \} = q_i(x)$
The one can check that 
\[
Q_i 
= 1 + \frac{a_1}{\M\sqrt{n}}\deriv{x} + \frac{a_2}{n\M(\M-1)}\deriv{x}^2 + O\left(n^{-3/2}\right)  
= 1 -  \frac{\sigma^2}{2n(\M-1)}\deriv{x}^2 + O\left(n^{-3/2}\right)
\]
and so 
\begin{equation}
\label{eqn:sum_clt}
[q_1 \sqsum_{\M} \dots \sqsum_{\M} q_n] 
= \left(\prod_{i=1}^n Q_i \right) \left\{ x^{\M} \right\} 
= \left(1 -  \frac{\sigma^2}{2n(\M-1)}\deriv{x}^2 + O\left(n^{-3/2}\right) \right)^n \left \{ x^{\M} \right\}
\end{equation}
which one can check converges to 
\[
e^{-\sigma^2\deriv{x}^2 / 2(\M-1)} \left\{ x^{\M} \right\} 
= \mathfrak{N}_{\M}[0, \sigma^2]
\]
as $n \to \infty$.
\end{proof}

Typically one needs to assume bounds on the higher moments in order to guarantee that the higher order terms in Equation~(\ref{eqn:sum_clt}) converge to $0$ asymptotically.
For polynomials of a fixed degree, however, this is implied by the bound on the second moment and the finite support of the underlying distribution.

\subsubsection{Poisson}
\label{sec:poisson}
The Poisson random variable with parameter $\lambda$ is characterized by having all cumulants equal to $\lambda$, so that 
\[
\myrtrans{\M}{\mu_A}{s} = \frac{\lambda}{1 - s}
\]
This time, we have
\[
\ln Q(x) = -\lambda \M \int \frac{1}{1-x} \d{x} = \lambda\M \ln(1-x) + c 
\]
so that 
\[
Q\left(\frac{\deriv{x}}{\M} \right) \left\{ x^{\M}\right\} 
\approx \left( 1 - \frac{\deriv{x}}{\M} \right)^{\lambda \M} \left\{ x^{\M} \right\}
\approx L_{\M}^{((\lambda-1)\M)} (\M x)
\]
where $L_{\M}^{(\alpha)}(x)$ is an associated Laguerre polynomial.
\begin{equation}
\label{eq:laguerre}
L_n^{(\alpha)}(x) 
= \sum_i \frac{(-x)^i}{i!} \binom{n+\alpha}{n-i}.
\end{equation}
This leads to the following definition (we will assume $\lambda \M$ is an integer for simplicity):
\begin{equation}
\label{eq:poiss}
\mathfrak{P}_{\M}[\lambda] 
:= \M!(-\M)^{-\M} L_{\M}^{((\lambda-1)\M)} (\M x)
\end{equation}
where again the constant is only there so the resulting polynomial is monic.

Note that when $\lambda < 1$, we should expect to have an atom of probability $(1-\lambda)$ at $0$.
While this is not obvious in the current definition, the following Laguerre polynomial identity
\begin{equation}
\label{eq:lag_ident}
\frac{(-x)^i}{i!} L_n^{(i-n)}(x) = \frac{(-x)^n}{n!} L_i^{(n-i)}(x).
\end{equation}
can be applied to get
\begin{equation}
\label{eq:poiss2}
\mathfrak{P}_{\M}[\lambda](x) 
= (\lambda\M)!(-\M)^{-\lambda \M} x^{\M(1 - \lambda)} L_{\lambda\M}^{(\M(1 - \lambda))}(x\M)
\end{equation}
which (more obviously) has the atom at $0$.

Again, it should come as no surprise that the resulting polynomial is a Laguerre polynomial, since the root distribution must converge asymptotically to the free Poisson law (or {\em Marchenko-- Pastur}) law, and it is known that the Laguerre polynomials do exactly that.
One does need to check that the correct scaling of the law occurs, which one can do simply by checking that the support of the law matches that of the free Poisson law (the free Poisson law with expectation $\lambda$ has support $[(1 - \sqrt{\lambda})^2, (1 + \sqrt{\lambda})^2]$).
This can be done easily using the following result of Dette and Studden \cite{laguerre_roots}.

\begin{theorem}
\label{thm:lag_roots}
For any $a > 0$, the root distribution of the polynomials
\[
L_{n}^{(a n)}(bnx)
\]
has asymptotic density function
\[
\frac{b}{2\pi x} \sqrt{(r_2 - x)(x - r_1)}
\]
on the interval $[r_1, r_2]$ where 
\[
r_1 
= \frac{1}{b}(2 + a - 2\sqrt{1 + a})
\AND
r_2 
= \frac{1}{b}(2 + a + 2\sqrt{1 + a}).
\]
\end{theorem}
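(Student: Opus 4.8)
The plan is to identify the limiting root measure through its Cauchy transform, using the differential equation for Laguerre polynomials. First reduce to the case $b=1$: the substitution $x\mapsto bx$ sends $L_n^{(an)}(bnx)$ to a polynomial whose zeros are $1/b$ times those of $L_n^{(an)}(nx)$, which divides the support endpoints by $b$ and multiplies the density by $b$ after reparametrization, matching the stated formulas exactly (a one-line computation with the change-of-variables formula for pushforward densities). So assume $b=1$. Set $u_n(z)=L_n^{(an)}(nz)$, a degree $n$ polynomial whose zeros are the rescaled Laguerre zeros; let $\nu_n$ be the uniform empirical measure on them and $G_n(z)=\tfrac1n\,u_n'(z)/u_n(z)=\int (z-x)^{-1}\,d\nu_n(x)$ its Cauchy transform.

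The Laguerre polynomial $L_n^{(\alpha)}$ satisfies $xy''+(\alpha+1-x)y'+ny=0$. Writing $y(x)=L_n^{(an)}(x)$ so that $u_n(z)=y(nz)$ and substituting $\alpha=an$, $x=nz$ converts this, after dividing by $u_n(z)$, into
\[
\frac{z}{n}\,\frac{u_n''}{u_n}+\Bigl(a+\tfrac1n-z\Bigr)\frac{u_n'}{u_n}+n=0 .
\]
Since $u_n'/u_n=nG_n$ and $u_n''/u_n=nG_n'+n^2G_n^2$, dividing by $n$ gives
\[
\frac{z\,G_n'}{n}+z\,G_n^2+\Bigl(a+\tfrac1n-z\Bigr)G_n+1=0 .
\]
Granting (see the last paragraph) that $\nu_n$ converges weakly to a compactly supported probability measure $\nu$ with Cauchy transform $G$, with $G_n\to G$ and $z\,G_n'/n\to 0$ locally uniformly off the real axis, the limit of the identity is the quadratic $z\,G^2+(a-z)G+1=0$.

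Solving, $G(z)=\bigl((z-a)-\sqrt{(z-a)^2-4z}\bigr)/(2z)$, with the branch of the root fixed by $G(z)\sim 1/z$ at infinity. The discriminant $(z-a)^2-4z=z^2-(2a+4)z+a^2$ has roots $r_1=2+a-2\sqrt{1+a}$ and $r_2=2+a+2\sqrt{1+a}$ (and $r_1=(\sqrt{1+a}-1)^2>0$, which is where $a>0$ is used). Hence $G$ is analytic off $[r_1,r_2]$, and Stieltjes inversion gives $d\nu/dx=-\pi^{-1}\operatorname{Im}G(x+i0)=\tfrac{1}{2\pi x}\sqrt{(r_2-x)(x-r_1)}$ on $[r_1,r_2]$ — the $b=1$ case of the claim (and $\nu$ is the Marchenko--Pastur law of ratio $1+a$, as it must be).

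The main obstacle is the two facts granted above. For the existence of $\nu$ it suffices that the family $\{\nu_n\}$ be tight, i.e. that all zeros of $L_n^{(an)}$ lie in a fixed interval $[\delta n,Mn]$ with $\delta>0$; this is a standard estimate on the extreme zeros of Laguerre polynomials (e.g. via a Sturm comparison argument applied to the Laguerre differential equation), and the positivity $\delta>0$ again uses $a>0$. Given tightness, every weak subsequential limit has Cauchy transform solving the quadratic above, which has a unique solution with $G(z)\sim1/z$, so $\nu$ is unique and the whole sequence converges. Then $z\,G_n'/n\to0$ is immediate: on compact subsets of $\mathbb{C}\setminus[\delta n,Mn]$ the $G_n$ are uniformly bounded and analytic, so by Cauchy's estimates the $G_n'$ are too, and the factor $1/n$ kills the term. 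An alternative route avoiding the ODE is to feed the recurrence coefficients $b_k=2k+an+1$, $c_k=k(k+an)$, which after the scaling satisfy $b_{\lfloor tn\rfloor}/n\to 2t+a$ and $c_{\lfloor tn\rfloor}/n^2\to t(t+a)$ uniformly on $[0,1]$, into the asymptotic zero-distribution theorem for orthogonal polynomials with varying recurrence coefficients; the resulting integral formula for $\nu$ then simplifies to the stated density by a direct calculation.
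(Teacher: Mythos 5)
The paper offers no proof of this statement at all --- it is imported verbatim from Dette and Studden \cite{laguerre_roots} --- so there is no internal argument to compare against; your proposal supplies a genuine proof. It is correct. The reduction to $b=1$ by pushforward is right (and the endpoints and the extra factor of $b$ in the density come out exactly as stated); the conversion of the Laguerre equation $xy''+(\alpha+1-x)y'+ny=0$ with $\alpha=an$, $x=nz$ into the Riccati identity $zG_n'/n+zG_n^2+(a+1/n-z)G_n+1=0$ is a correct computation; the limiting quadratic $zG^2+(a-z)G+1=0$ with the branch fixed by $G\sim 1/z$ has discriminant $z^2-(2a+4)z+a^2$ with roots $r_{1,2}=2+a\mp 2\sqrt{1+a}$, and Stieltjes inversion yields the stated density (which one checks integrates to $1$ since $\tfrac14(\sqrt{r_2}-\sqrt{r_1})^2=\tfrac14(r_1+r_2-2\sqrt{r_1r_2})=\tfrac14(2a+4-2a)=1$). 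The one point where you lean on outside facts is tightness, i.e.\ that the zeros of $L_n^{(an)}$ lie in $[\delta n, Mn]$ with $\delta>0$; this is indeed a standard extreme-zero estimate for Laguerre polynomials with linearly growing parameter, and you correctly flag that $a>0$ is what keeps $\delta>0$. Given that, your uniqueness-of-subsequential-limits argument and the Cauchy-estimate killing of $zG_n'/n$ are both sound (minor slip: the relevant compact sets are in $\mathbb{C}\setminus[\delta,M]$, the \emph{rescaled} interval, not $\mathbb{C}\setminus[\delta n,Mn]$). Your alternative route via the recurrence coefficients $b_k=2k+an+1$, $c_k=k(k+an)$ and the varying-coefficient zero-distribution theorem is essentially the method of the cited source, so the ODE/Riccati argument is the one that adds independent value here.
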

For $\lambda > 1$, we can apply Theorem~\ref{thm:lag_roots} to Equation~(\ref{eq:poiss}) (so $a = \lambda - 1$ and $b=1$) to get a root distribution supported on the interval
\[
[ (1 - \sqrt{\lambda})^2, (1 + \sqrt{\lambda})^2 ].
\]
When $\lambda < 1$, we can instead apply Theorem~\ref{thm:lag_roots} to what is left of Equation~(\ref{eq:poiss2}) 
after removing the atom at $x = 0$ with measure $1 - \lambda$.
This time, we take $a = 1/\lambda$ and $b = 1 / \lambda$ to get a root distribution (again) supported on the interval
\[
[ (1 - \sqrt{\lambda})^2, (1 + \sqrt{\lambda})^2 ].
\]

The limit theorem associated with Poisson distribution is known as the {\em Poisson limit theorem}.
This classical version of this theorem states that if $X_i$ are independent Bernoulli random variables with 
$\mathrm{P}(X_i = 1) = p$, then 
\[
\sum_i X_i \xrightarrow[np \to \lambda]{n \to \infty} \mathrm{Pois}(\lambda)
\]
in distribution.
Here we give a proof in the $\M$-finite case:
\begin{theorem}[Poisson limit theorem]
\label{thm:poisson}
For all $\lambda, \M$ such that $\lambda \M$ is an integer, we have 
\[
[\underbrace{p \sqsum_{\M} \dots \sqsum_{\M} p}_{\lambda \M~\text{times}}] 
= \mathfrak{P}_{\M}[\lambda](x)
\]
where $p(x) = x^{\M - 1} (x - 1)$.
\end{theorem}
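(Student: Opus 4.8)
The plan is to carry out the whole computation in the differential-operator picture of Section~\ref{sec:add_prop}. First I would write $p(x) = x^{\M-1}(x-1) = x^{\M} - x^{\M-1}$ and note that, since $\deriv{x}\{x^{\M}\} = \M x^{\M-1}$, we have $p(x) = P\{x^{\M}\}$ for the linear differential operator $P = 1 - \frac{1}{\M}\deriv{x}$. Because $\sqsum_{\M}$ on polynomials of degree $\M$ is isomorphic, via the map $\phi$ of Section~\ref{sec:add_prop} sending $p$ to the operator $P$ with $P\{x^{\M}\}=p$, to composition of such operators — equivalently, by Lemma~\ref{lem:deriv_commute} together with the associativity, commutativity, and identity $[f \sqsum_{\M} x^{\M}] = f$ recorded in that section — a straightforward induction gives
\[
[\,\underbrace{p \sqsum_{\M} \cdots \sqsum_{\M} p}_{\lambda\M}\,](x) = P^{\lambda\M}\{x^{\M}\} = \left(1 - \frac{\deriv{x}}{\M}\right)^{\lambda\M}\{x^{\M}\}.
\]

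Second, I would expand this by the binomial theorem, using $\deriv{x}^{\,i}\{x^{\M}\} = \frac{\M!}{(\M-i)!}x^{\M-i}$ (which vanishes for $i > \M$), to obtain
\[
\left(1 - \frac{\deriv{x}}{\M}\right)^{\lambda\M}\{x^{\M}\} = \sum_{i=0}^{\M}\binom{\lambda\M}{i}\left(-\frac{1}{\M}\right)^{i}\frac{\M!}{(\M-i)!}\,x^{\M-i} = \sum_{j}\binom{\lambda\M}{\M-j}\left(-\frac{1}{\M}\right)^{\M-j}\frac{\M!}{j!}\,x^{j},
\]
the last step being the substitution $j = \M - i$. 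On the other side, the explicit Laguerre formula Equation~(\ref{eq:laguerre}) gives $L_{\M}^{((\lambda-1)\M)}(\M x) = \sum_{i}\frac{(-\M x)^i}{i!}\binom{\lambda\M}{\M-i}$, so by the definition in Equation~(\ref{eq:poiss}),
\[
\mathfrak{P}_{\M}[\lambda](x) = \M!(-\M)^{-\M}\sum_{i}\frac{(-\M)^{i}x^{i}}{i!}\binom{\lambda\M}{\M-i} = \sum_{i}\binom{\lambda\M}{\M-i}\frac{\M!}{i!}(-\M)^{i-\M}x^{i}.
\]
It then remains only to observe that $\left(-\frac{1}{\M}\right)^{\M-j} = (-\M)^{j-\M}$, so the two expansions agree coefficient by coefficient, which is the claimed identity.

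The main obstacle is nothing more than this final bit of bookkeeping: tracking the normalization $\M!(-\M)^{-\M}$ against the falling factorials $\M!/(\M-i)!$ and reconciling the signs $(-1/\M)^{i}$ with $(-\M)^{i}$ after reindexing. There is no analytic content — everything is a finite polynomial identity — and the computation simultaneously upgrades the sketch in Section~\ref{sec:poisson} from an $\approx$ to an exact equality. It is worth noting that $p(x) = x^{\M-1}(x-1)$ is precisely the ``finite Bernoulli'' polynomial: its normalized root multiset takes the value $1$ with probability $1/\M$ and $0$ otherwise, which is exactly the characteristic polynomial of a rank-one orthogonal projection. Thus $[\underbrace{p \sqsum_{\M}\cdots\sqsum_{\M}p}_{\lambda\M}] = \mathfrak{P}_{\M}[\lambda]$ is the finite free analogue of the classical statement that a sum of $n$ independent $\mathrm{Bernoulli}(q)$ variables with $nq = \lambda$ approaches $\mathrm{Pois}(\lambda)$ — here realized as an identity rather than a limit, with $n = \lambda\M$ and $q = 1/\M$.
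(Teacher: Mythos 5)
Your proof is correct, and its first half coincides exactly with the paper's: both write $p = \left(1 - \tfrac{1}{\M}\deriv{x}\right)\{x^{\M}\}$ and use the operator--convolution correspondence of Section~\ref{sec:add_prop} to reduce the iterated convolution to $\left(1 - \tfrac{1}{\M}\deriv{x}\right)^{\lambda\M}\{x^{\M}\}$. Where you diverge is in identifying this polynomial as $\mathfrak{P}_{\M}[\lambda]$: the paper quotes the Rodrigues-type operator identity $n!\,x^{\alpha}L_n^{(\alpha)}(x) = (\deriv{x}-1)^n x^{n+\alpha}$, combines it with Equation~(\ref{eq:lag_ident}) and a change of variables to obtain $(\deriv{x}-r)^n x^{n+\alpha} = (n+\alpha)!\,(-r)^{-\alpha}L_{n+\alpha}^{(-\alpha)}(rx)$, and then substitutes $r=\M$, $n=\lambda\M$, $\alpha=(1-\lambda)\M$; you instead expand $\left(1-\tfrac{1}{\M}\deriv{x}\right)^{\lambda\M}\{x^{\M}\}$ by the binomial theorem and match coefficients against the explicit formula in Equation~(\ref{eq:laguerre}). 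Your route is more elementary and self-contained, since it needs no Laguerre identities beyond the defining sum (and in particular sidesteps the unproved operator identity the paper relies on); the paper's route is shorter on the page and, by keeping the general parameters $(r,n,\alpha)$, yields a reusable formula that the paper immediately deploys again in the restricted invertibility discussion. Your closing remark correctly identifies $x^{\M-1}(x-1)$ as the finite Bernoulli/rank-one-projection polynomial, which matches the paper's framing of the result as a Poisson limit theorem realized as an exact identity.
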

\begin{proof}
We start by noting another identity of Laguerre polynomials:
\[
n! x^{\alpha} L_n^{(\alpha)}(x) = (\deriv{x} -1)^n x^{n + \alpha}
\]
which when combined with Equation~(\ref{eq:lag_ident}) (and a change of variables) gives
\begin{equation}
\label{eq:diff}
(\deriv{x} - r)^n x^{n + \alpha} = (n+\alpha)! (-r)^{-\alpha} L_{n + \alpha}^{(-\alpha)}(rx)
\end{equation}
for all constants $r$.

Now we write
\[
p(x) 
= x^{\M} - x^{\M-1} 
= \left(1 - \frac{1}{\M}\deriv{x} \right) \{ x^{\M} \}
\]
so that
\[
[\underbrace{p \sqsum_{\M} \dots \sqsum_{\M} p}_{\lambda \M~\text{times}}]  
= \left(1 - \frac{1}{ks}\deriv{x} \right)^{\lambda \M} \{ x^{\M} \}
= (-\M)^{-\lambda \M} \left(\deriv{x} - \M\right)^{\lambda \M} \{ x^{\M} \}.
\]
So using Equation~(\ref{eq:diff}) with $r = \M$ and $n = \lambda \M$ and $\alpha = (1-\lambda)\M$ gives
\[
[\underbrace{p \sqsum_{\M} \dots \sqsum_{\M} p}_{\lambda \M~\text{times}}] = \M! (-\M)^{-\M} L_{\M}^{((\lambda - 1)\M)}(\M x)
\]
as claimed.
\end{proof}

\subsubsection{Compound Poisson}

The compound Poisson random variable with parameter $\lambda$ and second distribution $\mu$ is characterized by having $i$th cumulant
$\kappa_i = M_i(\mu)$ where $M_i$ is the $i$th moment of $\mu$.
We assume that $\mu$ is distributed uniformly over the roots of a degree $\M$ polynomial $h$.
Hence
\[
\myrtrans{\M}{\mu_A}{s} = \lambda \int \frac{t}{1 - st} \mu(t) \d{t} 
\]
Hence
\[
\ln Q(x) 
= -\lambda \M \iint \frac{t}{1 - xt}\mu(t)\d{x}\d{t} 
= c + \lambda \M \int \ln \left( 1 - xt \right) \mu(t) \d{t}. 
\]
Now if $h(x) = \prod_i (x - r_i)$, then we have
\[
\int \ln \left( 1 - xt \right) \mu(t) \d{t} 
= \sum_i \ln(1 - xr_i) 
= \ln \prod_i (1 - xr_i).
\]
Hence we have
\[
Q\left(\frac{\deriv{x}}{\M} \right) \left\{ x^{\M}\right\} 
\approx \prod_i \left(1 - \frac{r_i}{\M}\deriv{x}\right)^{\lambda \M}\left\{ x^{\M} \right\} 
\approx L_{\M}^{((\lambda-1)\M)} (\M r_1 x) \sqsum_{\M} \dots \sqsum_{\M} L_{\M}^{((\lambda-1)\M)} (\M r_{\M}x)
\]
where $L_{\M}^{(\alpha)}(x)$ is the associated Laguerre polynomial from Equation~(\ref{eq:laguerre}).

\subsection{Restricted Invertibility}

To conclude the applications, we relate the well known concept of {\em restricted invertibility} first proved by Bourgain and Tzafriri in \cite{BT} to the theory developed here.
An argument of this type was first introduced in \cite{ICM}, but many aspects of the proof become much more intuitive in the language of finite free probability.
Theorem~3.1 of \cite{ICM} is the following:
\begin{theorem}\label{thm:ri}
If $v_1, \dots, v_n \in \R^{\M}$ are vectors with 
\[
\sum_{i = 1}^{n} v_i v_i^T
= I,
\]
then for all $k < n$, there exists a set $S \subset [n]$ with $|S| = k$ such that 
\[
\lambda_k\left(\sum_{i \in S} v_i v_i^T\right) 
\geq \left(1 - \sqrt{\frac{k}{\M}}\right)^2 \left(\frac{\M}{n}\right)
\]
\end{theorem}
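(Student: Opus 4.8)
The plan is to run the interlacing-families argument of \cite{ICM}, but to organize its two halves --- producing a good subset, and estimating the relevant polynomial --- around the finite free machinery of this paper. Write $A_i = v_iv_i^T$, so each $A_i$ is a rank-one positive semidefinite matrix with $\sum_{i=1}^n A_i = I$, and set
\[
p(x) = \E_{|S| = k}\mydet{xI - \sum_{i \in S} A_i},
\]
the expectation taken over a uniformly random $k$-subset $S \subseteq [n]$. Since $\sum_{i\in S}A_i$ is positive semidefinite of rank at most $k$, each characteristic polynomial being averaged is real rooted with nonnegative roots and a factor $x^{\M-k}$; $p$ itself will turn out to be real rooted (this follows from the interlacing structure below, and also from the explicit form found later), of the shape $x^{\M-k}\,\widetilde p(x)$ with $\widetilde p$ degree $k$ and positive roots $\beta_1 \ge \dots \ge \beta_k > 0$. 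The theorem then reduces to two claims: (i) there is a concrete $S^\ast$ with $\lambda_k\!\left(\sum_{i \in S^\ast} A_i\right) \ge \beta_k$, and (ii) $\beta_k \ge \left(1 - \sqrt{k/\M}\right)^2(\M/n)$.

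For (i) I would build the standard interlacing family: a binary tree of depth $n$ recording the decisions ``$v_i$ in / out'', restricted to nodes consistent with choosing exactly $k$ vectors, with each node carrying the average of $\mydet{xI - \sum_{i \in S} A_i}$ over the valid leaves below it (so the root carries $p$ and the leaves the individual characteristic polynomials). At an internal node the two children differ by whether one further rank-one term is forced in; adding a rank-one positive semidefinite matrix interlaces the characteristic polynomial, and the relevant expectations remain real rooted --- an instance of the real-rootedness preservation recorded in Section~\ref{sec:formulas} (cf.\ \cite{conv, BB2}). Hence the family interlaces, so some leaf $S^\ast$ satisfies $\lambda_k\!\left(\chi_{[\sum_{i \in S^\ast} A_i]}\right) \ge \lambda_k(p) = \beta_k$, and since $\sum_{i \in S^\ast} A_i$ is positive semidefinite of rank at most $k$ its $k$th largest eigenvalue equals this root, giving (i).

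Next I would compute $p$ explicitly using $\sum_i A_i = I$. Expanding $\mydet{xI - \sum_{i \in S} A_i} = \sum_{j} (-1)^j x^{\M - j} e_j$, the coefficient $e_j$ is the $j$th elementary symmetric function of the eigenvalues of $\sum_{i \in S} A_i = V_SV_S^T$, which by the Cauchy--Binet / Gram-determinant identity equals $\sum_{T \in \binom{S}{j}} \det\big[\langle v_a, v_b\rangle\big]_{a,b \in T}$. Averaging over $S$ pulls out a factor $\binom{n-j}{k-j}/\binom{n}{k}$ and leaves $\sum_{T \in \binom{[n]}{j}} \det\big[\langle v_a, v_b\rangle\big]_{a,b\in T} = \binom{\M}{j}$ (the $j$th elementary symmetric function of the eigenvalues of $I$), so
\[
p(x) = x^{\M - k} \sum_{j=0}^{k} (-1)^j \binom{\M}{j}\frac{\binom{n-j}{k-j}}{\binom{n}{k}}\, x^{k-j} =: x^{\M-k}\,\widetilde p(x),
\]
a universal degree-$k$ polynomial depending only on $\M, n, k$. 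I would then identify $\widetilde p$, via the dictionary of Section~\ref{sec:poisson}, as the finite free Poisson object at the appropriate scale: the characteristic polynomial of a single rank-one projection is $x^{\M-1}(x-1)$, its $\sqsum_{\M}$-powers are associated Laguerre polynomials (Theorem~\ref{thm:poisson}), and $\widetilde p$ is the corresponding Laguerre polynomial up to a lower-order correction that vanishes as $n \to \infty$ --- tuned so that (by Theorem~\ref{thm:lag_roots}) its limiting root distribution is the Marchenko--Pastur law of ratio $k/\M$ rescaled by $\M/n$, whose left edge is exactly $\left(1-\sqrt{k/\M}\right)^2(\M/n)$. This is the conceptual content of ``restricted invertibility is finite free Poisson''.

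The hard part is (ii): extracting the \emph{non-asymptotic} lower bound $\beta_k \ge \left(1 - \sqrt{k/\M}\right)^2 (\M/n)$, since Theorem~\ref{thm:lag_roots} only locates the limiting edge and says nothing about a fixed polynomial. I would handle this by a barrier / potential argument in the spirit of \cite{BT, ICM}: track the largest root of a suitably shifted (or reciprocal) version of the partial convolutions as the rank-one pieces are folded in, and show it never crosses the barrier placed at the Marchenko--Pastur edge. The finite free language should streamline this step over \cite{ICM}: the natural potential is (a derivative of) the $\M$-finite R-transform of the partial convolution, which behaves additively under $\sqsum_{\M}$ by construction, so the estimate collapses to a one-variable monotonicity statement about that transform evaluated near the edge. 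Carrying out this tracking carefully and reading off the constant $\left(1-\sqrt{k/\M}\right)^2$ is where essentially all of the remaining work lies.
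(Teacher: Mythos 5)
Your overall architecture (expected characteristic polynomial of a random selection, an interlacing family to pass from the average to a single subset, identification of the average with a finite free Poisson / Laguerre object, and a root bound at the Marchenko--Pastur edge) is the same as the paper's. But there is a concrete divergence that creates a genuine gap: you sample a uniformly random $k$-subset \emph{without} replacement, whereas the paper samples $k$ vectors i.i.d.\ uniformly \emph{with} replacement. The with-replacement choice is not cosmetic --- it is exactly what makes the expected characteristic polynomial factor as the $k$-fold symmetric additive convolution of $\frac{1}{n}\sum_j \mydet{xI - v_jv_j^T} = x^{\M-1}\bigl(x - \tfrac{\M}{n}\bigr)$, so that Theorem~\ref{thm:poisson} applies verbatim and the average is \emph{exactly} $\M!(-n)^{-\M}L_{\M}^{(k-\M)}(nx)$, for which the root bound $(1-\sqrt{k/\M})^2(\M/n)$ is available from \cite{ICM}. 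Your without-replacement polynomial has coefficients $(-1)^j\binom{\M}{j}\binom{n-j}{k-j}/\binom{n}{k}$, i.e.\ falling factorials $n(n-1)\cdots(n-j+1)$ where the Laguerre polynomial has $n^j$; it is a genuinely different polynomial, and ``Laguerre up to a correction vanishing as $n\to\infty$'' gives nothing for a non-asymptotic theorem. You would need either a separate root bound for your polynomial or a root-domination argument relating it to the Laguerre polynomial, and neither is supplied. (The with-replacement scheme does admit leaves with repeated indices, but those have rank $<k$, hence $k$th root equal to $0$, so the leaf guaranteed by interlacing necessarily uses $k$ distinct vectors.)

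The second gap is that your step (ii) --- the actual inequality $\beta_k \ge (1-\sqrt{k/\M})^2(\M/n)$ --- is never proved; you describe a barrier/potential scheme and acknowledge that ``essentially all of the remaining work lies'' there. The paper also outsources the quantitative edge bound to \cite{ICM}, but it does so for the exact Laguerre polynomial, where that bound is a proved statement; for your modified polynomial no such result exists to cite, so the deferral is not merely a citation but an open step. A smaller point: for without-replacement sampling the interlacing-family property at internal nodes requires a common interlacing of the two conditional expectations (``in'' vs.\ ``out''), which does not follow just from ``adding a rank-one PSD matrix interlaces''; the independent, with-replacement setup of \cite{ICM} is the setting in which the real-rootedness machinery of \cite{conv, BB2} applies off the shelf.
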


Those familiar with random matrix theory might recognize the quantity
\[
\left(1 - \sqrt{\frac{k}{\M}}\right)^2 \left(\frac{\M}{n}\right)
\]
as being the lower bound on the spectrum of the Marchenko--Pastur distribution with parameters $\lambda = k/\M$ and intensity $\alpha =\M/n$.
This should not be much of a surprise, as the typical way one would form such a distribution would be to consider the spectrum of 
\[
\sum_{i = 1}^k u_i u_i^T
\]
where the $u_i$ are {\em random} vectors where each coordinate of $u_i$ is an independent Gaussian random variable with variance $\M/n$.
The resulting matrix is known as a {\em Wishart matrix}, and the distribution of the eigenvalues of such matrices was calculated in \cite{MP}.
Using the results of Voiculescu, the addition of these random vectors approaches free convolution (in the asymptotic limit), and so one can instead see this as the sum of $k$ freely independent rank-1 matrices with trace $\M/n$.
That is, the suspected bound in the finite case seems to be governed by the calculation in free probability.

The finite free version of rank-1 matrices with trace $\M/n$ is simply 
\[
p(x) 
= x^{\M} - \left(\frac{\M}{n} \right)x^{\M-1}
\]
and a calculation similar to the one done in Theorem~\ref{thm:poisson} gives 
\[
[\underbrace{p \sqsum_{\M} \dots \sqsum_{\M} p}_{k~\text{times}}] 
= \M! (-n)^{-\M} L_{\M}^{k - \M}(n x)
\]
where we pick $\lambda = k/\M$.
The remainder of the proof then lies in building an ``interlacing family'' as developed in \cite{IF1} that allows one to translate bounds on the roots of this Laguerre polynomial to bounds on the roots of individual polynomial.
One can easily check that such an interlacing family can be built by picking $k$ vectors uniformly at random {\em with replacement}.
That is, if $u_i$ is a random vector that is uniformly distributed over $v_1, \dots, v_n$, then 
\[
\E{}{ \mydet{xI - \sum_{i=0}^k u_i u_i^T }} 
= \M! (-n)^{-\M} L_{\M}^{(k-\M)}(n x)
\]
and the hierarchy of polynomials that one gets by picking each vector one at a time forms such an interlacing family.
We refer the reader to \cite{ICM} for details.

%
%\section{scratch}
%\begin{itemize}
%\item
%\[
%Q(t\partial) \{ p(x) \} = r(x) \iff Q(\partial) \{ p(tx) \} = r(tx)
%\]
%\item
%\[
%Q\left(\partial/m \right) \left\{ x^{m} \right\} = Q(0) \mydet{xI - A} \iff Q'(x)  =  -m Q(x) \myrtrans{m}{A}{x}
%\]
%\item
%\[
%Q(t\partial)\{x^{k+1}\} = x Q(t\partial) \{ x^k \} + t Q'(t\partial) \{ x^k \} 
%\]
%\item 
%\[
%\partial Q(t\partial)\{x^{k+1}\} = k Q(t\partial)\{x^{k}\}
%\]
%\end{itemize}

\section{Conclusions and Acknowledgements}

The purpose of this paper was to draw the connection between the recent work of the author with Daniel Spielman and Nikhil Srivastava \cite{ICM, conv, IF1, IF2,  IF4} with free probability and more traditional random matrix theory.
It shows how the asymptotic intuitions that have been established in each of these fields can be translated into finite results, both computationally and theoretically.
It gives a unified framework for solving problems using the ``method of interlacing polynomials'' that the author hopes will inspire new and creative uses.

\subsection{Further Research} \label{sec:ff_more}

There are a number of possible interesting directions.
Ongoing work between the author with Dimitri Shlyakhtenko and Nikhil Srivastava extends the concepts of free entropy and free Fisher information to the finite setting.
Finite free probability also seems to have an interesting combinatorial interpretation that draws from both the classical (all partitions) and the free (noncrossing partitions) interpretations.
Extending the results here to asymmetric matrices and to other concepts in free probability, like freeness with amalgamation \cite{amalgamation}, could also lead to interesting new applications.
Of particular interest would be an extension to bivariate polynomials, which is possibly related to the concept of {\em second order freeness} introduced by Mingo and Speicher \cite{mingo2006second}.
Such an extension would have the potential to place more advanced results such as \cite{TSP, IF2} under a similar umbrella, which would be a notable advance in the understanding of how such results fit into the free probability framework. 

It would be interesting to see direct applications to random matrix theory, particularly in the realm of universality. 
Since such results typically use the relationship between polynomials and random matrices (and free probability) in a somewhat ad-hoc way, one might hope that a theory connecting the two would be useful.
The concept of finite free entropy seems to be directly related to such pursuits, since its manifestation employs a {\em logarithmic potential}, a topic that has led to a number of results in the asymptotic root distributions of polynomials \cite{saff}.
In the reverse, random matrix theory could be useful in establishing concentration of measure results in this paradigm --- something typically necessary for widespread applications.

Lastly, we mention possible implications in quantum information theory.
In fact, results of this sort have already been applied in such a context: when $A$ and $B$ are in finite free position and $B$ has rank $1$, the (single) nontrivial eigenvector of $B$ coincides with the so-called {\em trace vector} that was introduced by Murray and von Neumann in their initial work in $C^*$ algebras \cite{rings}.
Trace vectors have been used to obtain results in private quantum channels \cite{trace_vectors}, and so the fact that finite freeness is a vast generalization of this concept gives promise to the possibility of further applications.

\subsection{Acknowledgements}

This work is an accumulation of ideas that has formed through an incredible partnership with Daniel Spielman and Nikhil Srivastava.
%While the author seemed to be the only one interested in attempting to build a theory to explain the work they had done, there would be nothing to explain if not for their contributions in the initial work.
The author wishes to recognize two conferences: the ``Beyond Kadison--Singer: paving and consequences'' workshop at AIM, and the ``Hot Topics: Kadison--Singer, Interlacing Polynomials, and Beyond'' workshop held at MSRI.
Both were influential in giving the author the foothold in the numerous fields that this work used as inspiration, and such interactions would not have happened without such support.
The author was also helped enormously by participating in the free probability workshop hosted by Dan Voiculescu and Dmitri Shlyakhtenko.   
In addition to thanking the many people people who have given insights into 
this topic, the author would like to specifically mention Ken Dykema and Dmitri 
Shlyakhtenko as both were instrumental at different times in steering the 
author through areas in which the author is still a novice.  

\bibliographystyle{abbrv}
\bibliography{ffbib}

\begin{thebibliography}{10}

\bibitem{TSP}
N.~Anari and S.~O. Gharan.
\newblock The {K}adison--{S}inger problem for strongly {R}ayleigh measures and
  applications to asymmetric {TSP}.
\newblock {\em arXiv preprint arXiv:1412.1143}, 2014.

\bibitem{anshelevich}
M.~Anshelevich.
\newblock Free stochastic measures via noncrossing partitions.
\newblock {\em Advances in Mathematics}, 155(1):154--179, 2000.

\bibitem{bapat1989mixed}
R.~Bapat.
\newblock Mixed discriminants of positive semidefinite matrices.
\newblock {\em Linear Algebra and its Applications}, 126:107--124, 1989.

\bibitem{BB2}
J.~Borcea and P.~Br{\"a}nd{\'e}n.
\newblock The {Lee-Yang} and {P{\'o}lya-Schur} programs, {II}. theory of stable
  polynomials and applications.
\newblock {\em Communications on Pure and Applied Mathematics},
  62(12):1595--1631, 2009.

\bibitem{BT}
J.~Bourgain and L.~Tzafriri.
\newblock Invertibility of `large' submatrices with applications to the
  geometry of {B}anach spaces and harmonic analysis.
\newblock {\em Israel Journal of Mathematics}, 57(2):137--224, 1987.

\bibitem{bcms}
M.~Bownik, P.~Casazza, A.~W. Marcus, and D.~Speegle.
\newblock Improved bounds in {Weaver's} and {Feichtinger's} conjectures.
\newblock {\em Journal f{\"u}r die reine und angewandte Mathematik},
  2019(749):267--293, 2019.

\bibitem{trace_vectors}
A.~Church, D.~W. Kribs, R.~Pereira, and S.~Plosker.
\newblock Private quantum channels, conditional expectations, and trace
  vectors.
\newblock {\em Quantum Information \& Computation}, 11(9-10):774--783, 2011.

\bibitem{deift_book}
P.~Deift.
\newblock {\em Orthogonal polynomials and random matrices: a {Riemann--Hilbert}
  approach}, volume~3.
\newblock American Mathematical Society, 2000.

\bibitem{deninger}
C.~Deninger.
\newblock Determinants on von {N}eumann algebras, {M}ahler measures and
  {L}japunov exponents.
\newblock {\em Journal f{\"u}r die reine und angewandte Mathematik (Crelles
  Journal)}, 2011(651):165--185, 2011.

\bibitem{laguerre_roots}
H.~Dette and W.~J. Studden.
\newblock Some new asymptotic properties for the zeros of {J}acobi, {L}aguerre,
  and {H}ermite polynomials.
\newblock {\em Constructive Approximation}, 11(2):227--238, 1995.

\bibitem{orig_edelman}
A.~Edelman and N.~R. Rao.
\newblock Random matrix theory.
\newblock {\em Acta Numerica}, 14:233--297, 2005.

\bibitem{universality}
L.~Erd{\H{o}}s.
\newblock Universality of {Wigner} random matrices: a survey of recent results.
\newblock {\em Russian Mathematical Surveys}, 66(3):507, 2011.

\bibitem{fiedler}
M.~Fiedler.
\newblock Additive compound matrics and an inequality for eigenvalues of
  symmetric stochastic matrices.
\newblock {\em Czechoslovak Mathematics Journal}, 24:392--402, 1974.

\bibitem{FugledeKadison}
B.~Fuglede and R.~V. Kadison.
\newblock On determinants and a property of the trace in finite factors.
\newblock {\em Proceedings of the National Academy of Sciences}, 37(7):425,
  1951.

\bibitem{HV}
J.~W. Helton and V.~Vinnikov.
\newblock Linear matrix inequality representation of sets.
\newblock {\em Communications on pure and applied mathematics}, 60(5):654--674,
  2007.

\bibitem{lax}
A.~Lewis, P.~Parrilo, and M.~Ramana.
\newblock The {L}ax conjecture is true.
\newblock {\em Proceedings of the American Mathematics Society},
  133(9):2495--2499, 2005.

\bibitem{MP}
V.~A. Marchenko and L.~A. Pastur.
\newblock Distribution of eigenvalues for some sets of random matrices.
\newblock {\em Matematicheskii Sbornik}, 114(4):507--536, 1967.

\bibitem{ICM}
A.~W. Marcus, D.~A. Spielman, and N.~Srivastava.
\newblock Ramanujan graphs and the solution of the {K}adison--{S}inger problem.
\newblock In {\em Proceedings of the ICM}, volume III, pages 375--386, 2014.

\bibitem{conv}
A.~W. Marcus, D.~A. Spielman, and N.~Srivastava.
\newblock Finite free convolutions of polynomials.
\newblock {\em arXiv preprint arXiv:1504.00350}, 2015.

\bibitem{IF1}
A.~W. Marcus, D.~A. Spielman, and N.~Srivastava.
\newblock Interlacing families {I}: bipartite {Ramanujan} graphs of all
  degrees.
\newblock {\em Annals of Mathematics}, 182-1:307--325, 2015.

\bibitem{IF2}
A.~W. Marcus, D.~A. Spielman, and N.~Srivastava.
\newblock Interlacing families {II}: mixed characteristic polynomials and the
  {K}adison--{S}inger problem.
\newblock {\em Annals of Mathematics}, 182-1:327--350, 2015.

\bibitem{IF4}
A.~W. Marcus, D.~A. Spielman, and N.~Srivastava.
\newblock Interlacing families {IV}: bipartite {Ramanujan} graphs of all sizes.
\newblock {\em SIAM Journal on Computing}, 47(6):2488--2509, 2018.

\bibitem{olkin}
A.~W. Marshall, I.~Olkin, and B.~Arnold.
\newblock {\em Inequalities: theory of majorization and its applications}.
\newblock Springer Science \& Business Media, 2010.

\bibitem{conv_hermite}
M.~L. Mehta and M.~Gaudin.
\newblock On the density of eigenvalues of a random matrix.
\newblock {\em Nuclear Physics}, 18:420--427, 1960.

\bibitem{mingo2006second}
J.~A. Mingo and R.~Speicher.
\newblock Second order freeness and fluctuations of random matrices, {I}.
  {G}aussian and {W}ishart matrices and cyclic {F}ock spaces.
\newblock {\em Journal of Functional Analysis}, 235(1):226--270, 2006.

\bibitem{rings}
F.~Murray and J.~von {N}eumann.
\newblock On rings of operators, {II}.
\newblock {\em Transactions of the American Mathematical Society},
  41(2):208--248, 1937.

\bibitem{nica2006lectures}
A.~Nica and R.~Speicher.
\newblock {\em Lectures on the combinatorics of free probability}, volume~13.
\newblock Cambridge University Press, 2006.

\bibitem{trace_vectors_polynomials}
R.~Pereira.
\newblock Matrix-theoretical derivations of some results of {B}orcea--{S}hapiro
  on hyperbolic polynomials.
\newblock {\em Comptes Rendus Mathematique}, 341(11):651--653, 2005.

\bibitem{edelman_polynomials}
N.~R. Rao and A.~Edelman.
\newblock The polynomial method for random matrices.
\newblock {\em Foundations of Computational Mathematics}, 8(6):649--702, 2008.

\bibitem{umbral}
S.~Roman.
\newblock {\em The umbral calculus}.
\newblock Springer, 2005.

\bibitem{saff}
E.~Saff and V.~Totik.
\newblock {\em Logarithmic potentials with external fields}, volume 316.
\newblock Springer Science \& Business Media, 2013.

\bibitem{amalgamation}
D.~Shlyakhtenko.
\newblock Random {G}aussian band matrices and freeness with amalgamation.
\newblock {\em International Mathematics Research Notices},
  1996(20):1013--1025, 1996.

\bibitem{szego_book}
G.~Szeg\H{o}.
\newblock {\em Orthogonal polynomials}, volume~23.
\newblock American Mathematical Society, 1939.

\bibitem{voiculescu}
D.~V. Voiculescu.
\newblock {\em Free probability theory}.
\newblock American Mathematical Society, 1997.

\end{thebibliography}

%\section{Appendix}
%\subfile{ff_appendix}

\end{document}